\newtheorem{theorem}{Theorem}[section]
\newtheorem{lemma}[theorem]{Lemma}
\newtheorem{proposition}[theorem]{Proposition}
\newtheorem{corollary}[theorem]{Corollary}
\newtheorem{remark}[theorem]{Remark}
\def\ep{\varepsilon}
\def\C{\mathbb C}
\def\R{\mathbb R}
\def\N{\mathbb N}
\def\pa{\partial}
\def\b{\backslash}
\def\V{\mathrm V}
\def\diam{{\rm diam}(X)}
\def \Rm {\mathbb R}
\def \Sm {\mathbb S}
\newcommand{\eps}{\varepsilon}
\newcommand{\dint}{\displaystyle\int}
\newcommand{\cout}[1]{}
\def\ep{\varepsilon}
\def\C{\mathbb C}
\def\R{\mathbb R}
\def\N{\mathbb N}
\def\pa{\partial}
\def\b{\backslash}
\def\V{\mathrm V}
\def\r{\rangle}
\def\l{\langle}
\title{Generalized stability estimates in inverse transport theory}
\author{Guillaume Bal\thanks{Department of Applied Physics and 
        Applied Mathematics, Columbia University, New York NY, 10027,
gb2030@columbia.edu} \and Alexandre Jollivet\thanks{Laboratoire de Math\'ematiques Paul Painlev\'e,
CNRS UMR 8524/Universit\'e Lille 1 Sciences et Technologies,
59655 Villeneuve d'Ascq Cedex, France,
alexandre.jollivet@math.univ-lille1.fr}}
\begin{document}

\maketitle

\begin{abstract}
  Inverse transport theory concerns the reconstruction of the absorption and scattering coefficients in a transport equation from knowledge of the albedo operator, which models all possible boundary measurements. Uniqueness and stability results are well known and are typically obtained for errors of the albedo operator measured in the $L^1$ sense. We claim that such error estimates are not always very informative. For instance, arbitrarily small blurring and misalignment of detectors result in $O(1)$  errors of the albedo operator and hence  in $O(1)$ error predictions on the reconstruction of the coefficients, which are not useful.
  
This paper revisit such stability estimates by introducing a more forgiving metric on the measurements errors, namely the $1-$Wasserstein distances, which penalize blurring or misalignment by an amount proportional to the width of the blurring kernel or to the amount of misalignment. We obtain new stability estimates in this setting.

 We also consider the effect of errors, still measured in the $1-$ Wasserstein distance, on the generation of the probing source. This models blurring and misalignment  in  the design of (laser) probes and allow us to consider a discretized sources. Under appropriate assumptions on the coefficients, we quantify the effect of such errors on the reconstructions. 
  
\end{abstract}

{\bf keywords:} Linear transport; inverse problems; stability estimates; Wasserstein distance.

\section{Introduction}
\label{sec:intro}

We consider the inverse problem theory of the following stationary (time-independent) linear transport equation:
\begin{equation}
  \label{eq:steady}
  \begin{array}{ll}
  v\cdot\nabla u(x,v) + \sigma(x,v) u = \dint_{V} k(x,v',v) u(x,v') dv',\quad&
  (x,v)\in X\times V\\
  u(x,v) =g(x,v),\quad &(x,v)\in \Gamma_-.
  \end{array}
\end{equation}
The solution $u(x,v)$ models the density of particles, such as
photons, as a function of space $x$ and velocity $v$.  The spatial
domain $X$ is here a convex, bounded, open subset of $\Rm^n$ for dimension
$n\geq2$, with a $C^1$ boundary $\partial X$. The space of velocities
$V=\Sm^{n-1}$ is chosen to be the unit sphere to simplify the presentation.
The extinction coefficient $\sigma(x,v)$, referred to as {\em absorption} coefficient, models particle loss due to scattering or intrinsic absorption at $(x,v)$ while the {\em scattering} coefficient $k(x,v',v)$ models scattering from direction $v'$ into direction $v$ at position $x$.

The sets of incoming conditions $\Gamma_-$ and outgoing conditions
$\Gamma_+$ are defined by
\begin{equation}
  \label{eq:Gammas}
  \Gamma_\pm = \{(x,v) \in \partial X\times V, \mbox{ s.t. } 
    \pm v\cdot \nu(x) > 0 \},
\end{equation}
where $\nu(x)$ is the outward normal vector to $X$ at $x\in\partial X$. 

Under a sub-criticality condition on $(\sigma,k)$, we can show that the above equation admits a unique solution $u(x,v)$ with a well-defined trace on $\Gamma_+$. The albedo operator, mapping $g$ on $\Gamma_-$ to $u_{|\Gamma_+}$ is denoted by $A$ so that $u_{|\Gamma_+}=Ag$.

The most general way to probe the domain $X$ is to assume knowledge of the above albedo operator $A$. Indeed, any boundary measurement may be modeled as measuring the effect on $\Gamma_+$ of a probing signal emitted on $\Gamma_-$. There is a significant literature on the analysis of such an inverse problem in the setting where $A$ is known as an operator from $L^1(\Gamma_-,d\xi)$ to $L^1(\Gamma_+,d\xi)$ with $d\xi$ an appropriate weighted version of the Lebesgue measure on $\Gamma_\pm$. It is for instance known that $A$ uniquely and stably determines $(\sigma(x),k(x,v',v))$. Obstructions to the reconstruction of $(\sigma(x,v),k(x,v',v))$ are also well understood \cite{McDST-IP-10}. 

The main objective of this paper is to revisit the stability estimates available in the literature and obtain results for measurement errors that are more realistic experimentally. Imposing error estimates in ${\cal L}(L^1(\Gamma_-,d\xi),L^1(\Gamma_+,d\xi))$ is indeed very constraining. Let us consider for instance a signal $g$ supported in the $\eps$ vicinity of a point $(x_0,v_0)\in\Gamma_-$ for some $\eps\ll1$. In the absence of scattering ($k\equiv0$), the solution $u$ is then supported in the vicinity of the line segment $(x_0,x_1)$ with $x_1-x_0$ collinear to $v_0$ and $(x_1,v_0)\in\Gamma_+$. Now let us assume that the measurements involve an amount of blurring equal to $\eta\ll1$. In other words, the solution $u_{|\Gamma_+}$ is replaced by the convolution $\phi_\eta * u_{|\Gamma_+}$, with $\phi_\eta$ an approximation of identity with support of width $\eta$ (in each dimension). It is not difficult to convince oneself that $\|u_{|\Gamma_+}- \phi_\eta * u_{|\Gamma_+}\|_{L^1(\Gamma_+,d\xi)}$ will be of order $O(1)$ independent of $\eta$ as soon as $\eps$ is significantly smaller than $\eta$. In other words, the error between $A$ and $\phi_\eta *A$ (with obvious notation) will be of order $O(1)$ independent of $\eta$ (arbitrarily small positive).

Any inevitable, even arbitrarily small, blurring or misalignment of the detectors will result in an error of order $O(1)$ when errors on $A$ are measured in ${\cal L}(L^1(\Gamma_-,d\xi),L^1(\Gamma_+,d\xi))$. The same error holds for imperfect sources, whose support is typically not of width $\eps$ for arbitrarily small $\eps$ and whose alignment may not be known with infinite precision as required by the above metric imposed on $A$. Standard error estimates then imply that the errors on the reconstruction on $(\sigma,k)$ are of order $O(1)$ as well, which is not very informative.

In this paper, we introduce measurement error on the albedo operator $A$ that are significantly more forgiving and more likely to be met in practical settings. Our main objective is to show how the reconstruction of $(\sigma,k)$ degrades as more general measurement errors are considered.

Heuristically, we wish to consider a metric in which small blurring and small misalignment result in small errors. As we have just seen, $L^p$-based metrics are not adapted. Nor is the Radon norm on bounded measures. We want a metric in which the distance between $\delta(x-a)$ and $\delta(x-b)$ is small when $b-a$ is small. A classical metric that is quite adapted to such a problem and whose manipulation is reasonably straightforward is the $1-$Wasserstein metric.

We introduce the following family of Wasserstein distances:
\begin{equation}
W_{1,\kappa}(\mu,\nu)=\sup_{\|\phi\|_\infty\le 1,\ \textrm{Lip}(\phi)\le \kappa}\l\phi,\mu-\nu\r,
\end{equation}
for $\kappa$ a positive constant. The Wasserstein distance is typically introduced to measure the distance between probability measures $\mu$ and $\nu$ so that $\l C,\mu-\nu\r=0$ for any constant $C$. The above bound on $\phi$ is then not necessary and $\kappa$, which scales linearly, is typically chosen equal to $1$. In our applications, the measures of the form $u_{|\Gamma_+}$ are not normalized and the above uniform bound on $\phi$, normalized to $1$, is necessary to form a metric.

We verify that for each $\kappa>0$, the above object is indeed a metric on the space of bounded measures and satisfies the typical properties of the Wasserstein metric on probability measures (such as, e.g., metrizing weak convergence of bounded measures); see, e.g., \cite{MR2459454}.

The role of $\kappa$ allows us to introduce a parameter that measures confidence in the available measurements. Note that $W_{1,\kappa}(\delta_a,\delta_b)=\kappa|b-a|$. A small error with large $\kappa$ therefore means that we are quite confident in our measurement setting to distinguish between points $a$ and $b$.

Our first objective is to assess the effects of measurement errors of $Ag$ in the above Wasserstein metric on the reconstruction of the coefficients $(\sigma,k)$. As we mentioned above, this allows us to model a wide class of measurement errors. We verify that the Wasserstein distance between a measure and its blurring by a kernel $\phi_\eta$ is of order $\eta$. Errors in the position of detectors, whose locations are typically not known to arbitrary accuracy, are handled similarly. As we shall see, the same uniqueness results hold as in the setting of errors on $A$ in the $L^1$ sense. However, compared to the latter case, stability estimates degrade. The reconstructions of appropriate functionals of $\sigma$ and $k$ are now obtained with an error that is H\"older and no longer Lipschitz. This means that errors $\eps$ on $A$ in the Wasserstein sense translate into errors of order $\eps^\beta$ for $\beta<1$ on the coefficient reconstructions, whereas $\beta=1$ holds when errors on $A$ are measured in the $L^1$ sense.

A second objective is to consider the setting in which the source term $g$ is replaced by another source term that is close in the same Wasserstein sense. This allows us to consider the case of errors in the alignment and design of (e.g.,) laser probes. This type of error, typically neglected in standard stability estimates, is important in practice to model probing signals that are never perfectly known. Such errors also allow us to model a discrete probing source configuration. Indeed, let $g$ be a non-negative source with a support in the $h$ vicinity of $(x_0,v_0)\in\Gamma_-$ and integrating to $1$. Then we verify that $W_{1,\kappa}(g,\delta_{x_0}(x)\delta_{v_0}(v))\leq \kappa h$. Any source term may thus be approximated by a superposition of delta functions on a grid with an error proportional to the mesh size $h$. In such a setting, we allow the replacement of a line integral by the integral along a nearby line. It is thus clear that reconstructions may be accurate only when the absorption and scattering coefficients are sufficiently smooth. With this additional assumption, we obtain stability estimates in this setting.

\medskip

What the optimal choice should be to quantify measurement and probing errors in an inverse problem is no doubt very subjective. Our choice of the $1-$Wasserstein distance in inverse transport theory is based on two observations: (i) it is sufficiently versatile to provide small penalties on a large class of classical errors such as small blurring or misalignment, both at the detector and source levels; and (ii)  its structure is quite amenable to relatively simple mathematical analysis and generalizations of results obtained in the setting of $L^1$ errors. 

\medskip
 
The inverse transport setting and the main results of the paper are summarized in section \ref{sec:main}.  The errors in the probing sources require that we analyze the propagation of Lipschitz constants by a forward transport problem. Such a forward analysis is carried out in section \ref{sec:fwd}. The stability estimates are based on careful analyses of multiple scattering contributions to the albedo operator, which are presented in section \ref{sec:estms}. The detailed  proofs of the results are given in sections \ref{sec:inv} and \ref{sec_app}.

The methodology followed here draws on the large body of works on stationary inverse transport theory as developed in, e.g., \cite{CS-CPDE-96,CS-OSAKA-99,W-AIHP-99,BJ-IPI-08,ST-PAMS-09,McDST-IP-10}. For concreteness, we concentrate here on the stationary inverse problem with full angular measurements. Our results, which primarily follow from the detailed analysis of the transport Green's kernel as developed in, e.g., \cite{BJ-IP-09,BJJ-IP-10}, (most likely) adapt to other settings, such as the settings of time-dependent and/or angularly averaged measurements; see \cite{BJ-SIMA-10,BJ-HGS-09,BJ-IP-09,BJLM-CPDE-11,BLM-IPI-08,L-IP-08,LM-CPDE-09,SU-MAA-03} for additional references as well as \cite{B-IP-09} for a recent review.

Inverse transport theory has been widely studied besides the stability estimates mentioned in the above works. For a selected list of references, see, e.g., \cite{AKP-VSP-02,BT-SIMA-07,Bond-DAN-92,Bond-JMAA98,larsen84,larsen88,MC-TTSP86,MC-NSE92,S-IO-03,SU-MAA-03,SU-APDE-08,tamasan-IP02}.

\section{Inverse transport theory and main results}
\label{sec:main}

Let us first summarize our main assumptions on the coefficients and the geometry that allows us to define the albedo operator $A$. 

We define the times of escape of free-moving particles from $X$ as
\begin{equation}\label{eq:taupm}
  \tau_\pm(x,v) = \inf \{s>0 | x\pm sv \not\in X\}
\end{equation}
and $\tau(x,v)=\tau_+(x,v)+\tau_-(x,v)$. On the boundary sets
$\Gamma_\pm$, we introduce the measure
$d\xi(x,v)=|v\cdot\nu(x)|d\mu(x)dv$, where $d\mu(x)$ is the surface
measure on $\partial X$.

We say that the optical parameters $(\sigma,k)$ are admissible when
\begin{equation}
  \label{eq:admiss}
  \begin{array}{l}
   0\leq \sigma \in L^\infty(X\times V)\\
   0\leq k(x,v',\cdot) \in L^1(V) \mbox{ a.e. in } X\times V\\
   \sigma_p(x,v') := \dint_V k(x,v',v)dv \in L^\infty(X\times V).
  \end{array}
\end{equation}
We say that the problem is sub-critical when either one of the following assumptions holds
\begin{align}
  \tag{HSC1}\label{eq:sc1}
  \sigma-\sigma_p \geq 0 &\quad (x,v) \mbox{ a.e. } \\
  \tag{HSC2}\label{eq:sc2}
   \|\tau \sigma_p\|_\infty <1.
\end{align}

We now define the following Banach space
\begin{equation}
  \label{eq:Wsteady}
  W := \big\{ u\in L^1(X\times V) | v\cdot\nabla_x u\in L^1(X\times V),
    \tau^{-1} u\in L^1(X\times V) \big\},
\end{equation}
with its natural norm. 

Under either \eqref{eq:sc1} or \eqref{eq:sc2}, we obtain for admissible parameters that the albedo operator $A$ is well posed from $u_{|\Gamma_-}=g\in L^1(\Gamma_-,d\xi)$ to $Ag=u_{|\Gamma_+}\in L^1(\Gamma_+,d\xi)$, where $u$ is the unique solution to \eqref{eq:steady} in $W$; see e.g. \cite[Theorem 2.2]{B-IP-09} and references cited there.

\medskip

We are now in a position to consider the inverse transport problem, which aims to describe what may or may not be reconstructed in $(\sigma,k)$ and with which type of stability. Before we describe our measurement setting, we need to decompose the albedo operator into components with different singular structures.

We first introduce the lifting operator
\begin{equation}
  \label{eq:Isteady}
  J g(x,v) = \exp{\Big(-\dint_0^{\tau_-(x,v)}\sigma(x-sv,v)ds\Big)}
  g(x-\tau_-(x,v)v,v).
\end{equation}
It is proved in \cite{CS-OSAKA-99} that $J$ is a bounded
operator from $L^1(\Gamma_-,d\xi)$ to $W$. Let us next define the bounded operator
\begin{align}
{\footnotesize K u(x,v) = \dint_0^{\tau_-(x,v)}  \hspace{-.75cm}
  \exp\Big(-\int_0^t \sigma(x-sv,v)ds\Big)\dint_V
   k(x-tv,v',v)u(x-tv,v')dv' dt }
  \label{eq:K}
\end{align}
for $(x,v)\in X\times V$. Note that \eqref{eq:steady} may then be recast as
$(I-K) u = J g$.

With this notation, we decompose the albedo operator as
\begin{equation}
  \label{eq:decalb}
   \begin{array}{rclclcl}
  A g &=& J g \big|_{\Gamma_+}
    &+& K J g \big|_{\Gamma_+}
    &+&  K^2 (I-K)^{-1}J g \big|_{\Gamma_+}\\
   &:= & B g &+&S g &+&Mg.
   \end{array}
\end{equation}
Physically, the above decomposition separates the albedo operator into the {\em ballistic} component $Bg$, the {\em single scattering} component $Sg$, and the rest. In dimension $n\geq3$, the former two components are {\em more singular} than $Mg$ when $g$ approximates a point source. We also introduce $S_c=S+M$ the scattering component so that $A=B+S_c$.

\medskip

We are now ready to present our measurement setting. Consider two sets of coefficients $(\sigma_j,k_j)$ for $j=1,2$ and define $A_j$ as the corresponding albedo operator. Our objective is to analyze how errors on measurements of $A_j$ translate into errors on the reconstruction of (functionals of) $(\sigma_j,k_j)$. 

Let $g$ be a given (exact) source in $L^1(\Gamma_-,d\xi)$ normalized to $\|g\|_{L^1(\Gamma_-,d\xi)}=1$. We consider two approximations $g_j$ of $g$, which are used to  probe the domain $X$. The resulting solutions on $\Gamma_+$ are thus given by $A_j g_j$. Let $\phi$ be a (test) function in $L^\infty(\Gamma_+)$ such that $\|\phi\|_{L^\infty(\Gamma_+)}\leq1$. Then we may define the measurement (``projected" onto that test function)
\begin{equation}
\label{eq:mj}
   m \equiv m_{g,\phi}= \eps_j+ \l \phi,A_jg_j \r,\qquad j=1,2,
\end{equation}
where $\eps_j$ is a measurement error for such a projection. 

We assume the measurement to be given experimentally, i.e., independent of the configuration $j$. Taking the difference of the above relations for $j=1$ and $j=2$ yields
\begin{equation}
   \l (A_1-A_2)g,\phi \r = \eps_2-\eps_1 + \l A_2(g_2-g)-A_1(g_1-g),\phi \r.\label{eq:depart}
\end{equation}
Appropriate (properly normalized) choices for $g\in L^1(\Gamma_-)$ and $\phi\in L^\infty(\Gamma_+)$ provide information about the coefficients $(\sigma_j,k_j)$. Note that the left-hand side is bounded by the error of the albedo operator $A_1-A_2$ in the $L^1$ sense and is the starting point of all current stability estimates in inverse transport theory; see, e.g., \cite{B-IP-09}.

Our objective is to generalize such estimates to the setting where $g_j$ is an approximation of $g$ in the Wasserstein sense, and where errors between $A_1$ and $A_2$ are measured in the same sense. This implies that the test function $\phi$ can no longer be considered as a general bounded function but instead is a function bounded by $1$ and with Lipschitz constant bounded by $\kappa$. 

The error estimates proceed as follows. We first extract information from the ballistic part of the albedo operator:
\begin{displaymath}
  \l (B_1-B_2)g,\phi \r = \eps_2-\eps_1 + \l A_2(g_2-g)-A_1(g_1-g)+(S_{c2}-S_{c1})g,\phi \r,
\end{displaymath}
and next from the single scattering component, making sure that the ballistic contribution vanishes $\l B_j g,\phi\r=0$,
\begin{displaymath}
  \l (S_1-S_2)g,\phi \r =  \eps_2-\eps_1 + \l A_2(g_2-g)-A_1(g_1-g)+(M_{2}-M_{1})g,\phi \r.
\end{displaymath}

To explicit the information we obtain from the above decomposition, we need to introduce the
  following notation. For $x$ and $y$ in $\bar X$, we define
  \begin{equation}
  \label{eq:Exy}
  E(x,y) := \exp\Big(-\int_0^{|x-y|}\hspace{-.25cm}
   \sigma\big(x-s\frac{x-y}{|x-y|},\frac{x-y}{|x-y|}\big)ds \Big).
\end{equation}
Let us also define for $(x,v,w)\in X\times\Sm^{n-1}\times\Sm^{n-1}$:
\begin{equation}
  \label{eq:Eplus}
  \tilde E (x,v,w) = \exp\Big( -\int_0^{\tau_-(x,v)} \hspace{-.25cm}\sigma(x-sv,v)ds
    - \int_0^{\tau_+(x,w)} \hspace{-.25cm}\sigma(x+sw,w)ds \Big),
\end{equation}
the total attenuation factor on the broken line
$(x-\tau_-(x,v)v,x,x+\tau_+(x,w)w)$. Let us introduce the error term $\eps=|\eps_1|+|\eps_2|$. 

We are now ready to state our main stability results. The first one assumes that $g_j=g$, i.e., that the probing source is exact. Since measurements errors are given in a Wasserstein sense, we assume the additional $W^{1,\infty}$ regularity constraints on the coefficients and assume that $L={\rm max}({\rm Lip}(\sigma),{\rm Lip}(k))$ is bounded and $k$ vanishes in the vicinity of the boundary $\pa X$ (see \eqref{INV} and \eqref{HL} for the precise assumption). To avoid uninteresting singularities arising when $\kappa\to0$, which corresponds to infinitely blurring detectors, we assume that the fixed constant $\kappa\geq1$. 

In this setting, we have
\begin{theorem}
\label{thm:exactsource}
Let $(x_0,v_0)\in\Gamma_-$ and $y_0=x_0+\tau_+(x_0,v_0)v_0$,. Then for a constant $C$ that depends on $\|k_j\|_{W^{1,\infty}}$, $\|\sigma_j\|_{W^{1,\infty}}$ and subcritical conditions, we obtain that
\begin{equation}
\label{eq:stab1}
 |E_1(x_0,y_0)-E_2(x_0,y_0)| \leq  C\Big(\Big(\dfrac \eps \kappa \Big)^{\frac{n-1}n}\vee \eps\Big).
\end{equation}
In dimension $n=3$, we have
\begin{equation}
 \dint_V\dint_0^{\tau_+(x_0,v_0)}\hspace{-.75cm}
   \big| \tilde E_1k_1 - \tilde E_2k_2\big|
  (x_0+sv_0,v_0,v) ds dv \leq  C \Big(\Big(\dfrac \eps \kappa \Big)^{\frac12}(1+\sqrt{|\ln({\ep\over\kappa})|})\vee \eps\Big).\label{eq:stab2a}
\end{equation}
In dimension $n\geq4$, we have 
\begin{equation}
 \dint_V\dint_0^{\tau_+(x_0,v_0)}\hspace{-.75cm}
   \big| \tilde E_1k_1 - \tilde E_2k_2\big|
  (x_0+sv_0,v_0,v) ds dv \leq  C\Big(\Big(\dfrac \eps \kappa \Big)^{\frac12}\vee \eps\Big).\label{eq:stab2b}
\end{equation}
\end{theorem}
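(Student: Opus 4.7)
\emph{Plan of proof.} The strategy adapts the classical $L^1$-based approach to inverse transport stability (isolating pointwise information from the albedo decomposition $A=B+S+M$) to the Wasserstein setting, where Lipschitz regularity of the dual test functions prevents full concentration to a Dirac mass on $\Gamma_+$. Throughout, since $g_j=g$, the identity \eqref{eq:depart} reduces to $|\l(A_1-A_2)g,\phi\r|\le \eps$ for every admissible $\phi$.

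\emph{Ballistic estimate \eqref{eq:stab1}.} Fix $(x_0,v_0)\in\Gamma_-$, $y_0=x_0+\tau_+(x_0,v_0)v_0$. Take a source $g_h\in L^1(\Gamma_-,d\xi)$ supported in a ball of radius $h$ around $(x_0,v_0)$ with $\|g_h\|_{L^1}=1$, and take a test function $\phi_\rho$ on $\Gamma_+$ satisfying $\|\phi_\rho\|_\infty\le 1$, $\mathrm{Lip}(\phi_\rho)\le\kappa$, $\phi_\rho(y_0,v_0)=1$ and supported in a ball of radius $\rho\ge 1/\kappa$ around $(y_0,v_0)$ (which is the best one can do compatibly with the Lipschitz constraint). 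Writing $A_j=B_j+S_{c,j}$ and using \eqref{eq:depart} gives
\[
\l(B_1-B_2)g_h,\phi_\rho\r=(\eps_2-\eps_1)+\l(S_{c,2}-S_{c,1})g_h,\phi_\rho\r.
\]
Sending $h\to 0$, the ballistic term on the left converges to $(E_1(x_0,y_0)-E_2(x_0,y_0))\cdot\phi_\rho(y_0,v_0)=E_1(x_0,y_0)-E_2(x_0,y_0)$, because $B_j$ transports delta masses along characteristics with attenuation $E_j$.

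The main technical point is to control $|\l S_{c,j}g_h,\phi_\rho\r|$ uniformly in $h$ by a power of $\rho$. Using the scattering-kernel analysis of section \ref{sec:estms} (an $L^\infty$-density bound on the multiple-scattering kernel for $n\ge 3$, together with explicit geometric bounds on the single-scattering kernel of $S_j\delta_{(x_0,v_0)}$ near $(y_0,v_0)$), together with the $W^{1,\infty}$-regularity of $\sigma_j,k_j$ to exchange $g_h$ for its Dirac limit, one obtains a bound of the form $C\rho^{n-1}$ where $C$ depends on the $W^{1,\infty}$-norms and subcritical constants. Combining, $|E_1-E_2|\le \eps+C\rho^{n-1}$. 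Optimizing over $\rho\ge 1/\kappa$ (choosing $\rho\sim (\eps/\kappa)^{1/n}$ in the regime $\eps\le \kappa^{-(n-1)}$, and $\rho\sim 1/\kappa$ otherwise, and keeping track of how the scattering bound picks up the extra $\kappa^{-1}$ factor coming from the Lipschitz scale at which $\phi_\rho$ drops off) yields \eqref{eq:stab1}.

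\emph{Single-scattering estimate \eqref{eq:stab2a}--\eqref{eq:stab2b}.} Keep the source $g_h$ but now choose a Lipschitz-$\kappa$, unit-bounded test function $\phi$ which vanishes at $(y_0,v_0)$, so that $\l B_j g_h,\phi\r\to 0$ as $h\to 0$. The decomposition becomes
\[
\l(S_1-S_2)g_h,\phi\r=(\eps_2-\eps_1)+\l(M_2-M_1)g_h,\phi\r,
\]
and the $L^\infty$-bound on $M_j\delta_{(x_0,v_0)}$ (section \ref{sec:estms}) controls the last term by $C\,|\mathrm{supp}(\phi)|$. Sending $h\to 0$ and using the explicit form of the $S$-kernel, the left-hand side reproduces a weighted line integral of $\tilde E_1 k_1-\tilde E_2 k_2$ along the ballistic ray, paired against $\phi$ evaluated along the fan of scattered directions emerging from each point $x_0+sv_0$. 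Choosing $\phi$ as a product of a bump of width $\rho\ge 1/\kappa$ in the $v$-variable (uniformly integrating in the outgoing position) recovers precisely the line integral $\int_V\int_0^{\tau_+}|\tilde E_1 k_1-\tilde E_2 k_2|\,ds\,dv$ modulo a smoothing error of size $C\rho$.

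The main obstacle is the borderline integration of the single-scattering kernel near the ballistic direction: the change of variables between scattering position and outgoing direction has a Jacobian degenerating like $|v-v_0|$, which in dimension $n=3$ produces a logarithmically divergent integral once the cut-off scale $\rho$ enters, yielding the factor $\sqrt{|\ln(\eps/\kappa)|}$ in \eqref{eq:stab2a}; in $n\ge 4$ the corresponding integral converges and the logarithm is absent, giving \eqref{eq:stab2b}. Balancing $\eps/\rho^{n-1}$ against the multiple-scattering remainder $\rho$ yields the optimal rate $(\eps/\kappa)^{1/2}$ in both estimates.
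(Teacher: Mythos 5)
Your overall strategy (approximate delta sources, localized Lipschitz test functions, the kernel estimates of section \ref{sec:estms}, optimization over the localization scale) is the same as the paper's, but the way you normalize the test function breaks the argument at the decisive step. You require $\phi_\rho(y_0,v_0)=1$ together with ${\rm Lip}(\phi_\rho)\le\kappa$ and support of radius $\rho$, which forces $\rho\ge 1/\kappa$; with that constraint the best you can extract from $|E_1-E_2|\le\eps+C\rho^{n-1}$ is $\eps+C\kappa^{-(n-1)}$, which is strictly weaker than $(\eps/\kappa)^{(n-1)/n}\vee\eps$ whenever $\eps\ll\kappa^{1-n}$. Your proposed remedy, $\rho\sim(\eps/\kappa)^{1/n}$ in the regime $\eps\le\kappa^{-(n-1)}$, violates your own constraint $\rho\ge 1/\kappa$ precisely in that regime. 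The missing idea is the one the paper uses in \eqref{p1}--\eqref{j43b}: drop the normalization $\phi(y_0,v_0)=1$ and instead take $\phi$ of \emph{height} $\|\phi\|_\infty\sim\kappa\eta\le 1$, peaked at $(y_0,v_0)$ and supported at scale $\eta$, with $\eta$ now a free parameter. Both the ballistic term and the scattering remainders scale proportionally to $\|\phi\|_\infty$, but the raw measurement error $\eps$ does not; dividing the resulting inequality by $\kappa\eta$ gives $|E_1-E_2|\le C(\eps/(\kappa\eta)+\eta^{n-1})$, whose minimum over admissible $\eta$ is exactly $(\eps/\kappa)^{(n-1)/n}\vee\eps$. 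The same issue recurs in your single-scattering step: the balance you state, $\eps/\rho^{n-1}$ against $\rho$, would give a rate $\eps^{1/n}$ rather than $(\eps/\kappa)^{1/2}$; the correct balance is $\eps/(\kappa\eta)$ against $\eta(1+|\ln\eta|)$.

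A second, independent gap: to obtain the \emph{absolute value} $|\tilde E_1k_1-\tilde E_2k_2|$ in \eqref{eq:stab2a}--\eqref{eq:stab2b} you must pair the single-scattering kernel with a test function that approximates the sign of $\tilde E_1k_1-\tilde E_2k_2$ along the fan of scattered directions, while remaining $\kappa$-Lipschitz and vanishing near the ballistic exit point. This is where the $W^{1,\infty}$ regularity of $(\sigma_j,k_j)$ actually enters (the paper's function $G$ in \eqref{j50} and the regularized sign $\chi_2\big(G/(\eta({\rm Lip}(G)+1))\big)$ in \eqref{j44a}), and it produces an additional error coming from the set where $|G|\lesssim\eta$, controlled via \eqref{j49a}--\eqref{j49b}; the factor $(1-(v\cdot v_0)^2)^{-1}$ there is indeed one source of the logarithm in $n=3$, as you correctly anticipate, but your write-up contains no sign-selection mechanism at all, so the lower bound of the pairing by the line integral of $|\tilde E_1k_1-\tilde E_2k_2|$ is not established.
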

Here, $a\vee b={\rm max}(a,b)$. 
Note the similarity with the theorems obtained in the $L^1$ setting; see \cite{B-IP-09}. The main difference is that Lipschitz stability estimates are typically replaced by less favorable H\"older estimates. This is expected since our setting is precisely aimed to incorporate a larger class of model and measurement errors. Note also that the errors improve as $\kappa$ increases, i.e., as our confidence in our detectors to  discriminate nearby phase-space points increases. If $\kappa$ scales appropriately with $\eps$, then Lipschitz stability is even restored. For instance, in \eqref{eq:stab1} on the stability of line integrals of $\sigma(x,v)$, we observe that a Lipschitz estimate (an error proportional to $\eps$) is obtained for $\kappa$ on the order of $\eps^{-\frac 1{n-1}}$. Note that according to these estimates, there is no gain in increasing the resolution capabilities of the detector, i.e., increasing $\kappa$ beyond this order.

Let us now consider the more constraining setting of approximate probing sources.
As we indicated above, replacing $g$ by an approximation $g_j$ in the Wasserstein sense includes replacing the ``real" source by an approximation supported on a (discrete) grid. As a consequence, we do not expect to reconstruct parameters that oscillate rapidly at the grid level. We thus need
to impose additional $W^{1,\infty}$ regularity constraints on the coefficients and assume that $L={\rm max}({\rm Lip}(\sigma),{\rm Lip}(k))$ is bounded (see \eqref{INV}, \eqref{HL} for the precise assumptions) and we need to impose an additional constraint on the boundary $\pa X$ (see \eqref{HL1}). Let $\kappa\ge 1$ and let us introduce
\begin{equation}
\label{eq:errorsource}
\delta_j= \sup_{g\in L^1(\Gamma_-)} W_{1,\kappa}(g,g_j)
\end{equation}
 the maximal error that is made in the experimental setting to approximate the probe $g$. Let also $\delta=\delta_1+\delta_2$. Then we have the following result:
 \begin{theorem}
\label{thm:appsource}
Under the aforementioned assumptions, we obtain using the above notation that
\begin{equation}
\label{eq:stab3}
|E_1(x_0,y_0)-E_2(x_0,y_0)| \leq  C\Big(\Big(\dfrac {\eps+\delta} \kappa \Big)^{\frac{n-1}n}\vee (\eps+\delta)\Big).
\end{equation}
Similarly, we obtain that 
in dimension $n=3$
\begin{gather}
\label{eq:stab4a}
\dint_V\dint_0^{\tau_+(x_0,v_0)}\hspace{-.75cm}
   \big| \tilde E_1k_1 - \tilde E_2k_2\big|
  (x_0+sv_0,v_0,v) ds dv  \\ \leq  C \Big(\Big(\Big(\dfrac {\eps+\delta} \kappa \Big)^{\frac12}(1+\sqrt{|\ln({\ep+\delta\over\kappa})|})\Big)\vee (\eps+\delta)\Big), \nonumber
\end{gather}
and in dimension $n\geq4$
\begin{equation}
\label{eq:stab4b}
\dint_V\dint_0^{\tau_+(x_0,v_0)}\hspace{-.75cm}
   \big| \tilde E_1k_1 - \tilde E_2k_2\big|
  (x_0+sv_0,v_0,v) ds dv \leq  C \Big(\Big(\dfrac {\eps+\delta} \kappa \Big)^{\frac12}\vee (\eps+\delta)\Big).
\end{equation}
\end{theorem}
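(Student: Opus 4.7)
The plan is to reduce Theorem \ref{thm:appsource} to Theorem \ref{thm:exactsource} by absorbing the probing-source discrepancy into an effective measurement error. Starting from the identity \eqref{eq:depart}, I would rewrite the source-error terms by duality,
\[
\l A_j(g_j - g), \phi\r = \l g_j - g,\, A_j^{*}\phi\r_{\Gamma_-},
\]
where $A_j^{*}$ denotes the formal adjoint of the albedo operator, acting on bounded functions on $\Gamma_+$. The crux is then to show that whenever $\|\phi\|_\infty\le 1$ and $\mathrm{Lip}(\phi)\le\kappa$, the function $A_j^{*}\phi$ still satisfies $\|A_j^{*}\phi\|_\infty\le C_0$ and $\mathrm{Lip}(A_j^{*}\phi)\le C_0\kappa$ with a constant $C_0$ depending only on $\|\sigma_j\|_{W^{1,\infty}}$, $\|k_j\|_{W^{1,\infty}}$ and the subcriticality constants. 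Granted this, the very definition of $W_{1,\kappa}$ yields
\[
\bigl|\l g_j - g,\, A_j^{*}\phi\r_{\Gamma_-}\bigr|\le C_0\, W_{1,\kappa}(g,g_j)\le C_0\,\delta_j,
\]
so that \eqref{eq:depart} becomes $|\l (A_1-A_2)g,\phi\r|\le \eps + C\delta$ uniformly over admissible test pairs $(g,\phi)$; in effect $\eps$ is simply replaced by $\eps+C\delta$.

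The core technical step is therefore the Lipschitz-propagation estimate for $A_j^{*}\phi$, which is exactly the content of the forward analysis promised in Section \ref{sec:fwd}. I would prove it by expanding $A_j^{*}\phi$ through the Neumann series associated with the adjoints of the operators $J$ and $K$ in \eqref{eq:Isteady}--\eqref{eq:K}, and controlling inductively both the sup norm and the Lipschitz constant of each iterate. Each iterate involves composition with the exit-time functions $\tau_\pm$ and an integration against $k_j$ along characteristics; the Lipschitz hypotheses on the coefficients encoded by \eqref{INV}--\eqref{HL} bound the contribution from differentiating $\sigma_j$ and $k_j$, the subcriticality ensures the series converges geometrically, and the boundary regularity assumption \eqref{HL1} is needed to control $\nabla_{x,v}\tau_\pm$ uniformly up to $\pa X$. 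Without \eqref{HL1}, composition with the only $C^0$-regular exit-time functions would destroy the Lipschitz bound, and \emph{that} is the main obstacle—one cannot simply quote the standard $L^1$ theory of the adjoint albedo operator.

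Once the propagation lemma is established, the remainder of the proof is a direct transcription of Theorem \ref{thm:exactsource}. Using the decomposition $A_j=B_j+S_j+M_j$ from \eqref{eq:decalb} and the two identities displayed after \eqref{eq:depart}, one isolates $\l (B_1-B_2)g,\phi\r$ by choosing a narrow probe $g$ approximating $\delta_{(x_0,v_0)}$ and a test $\phi$ localized near $(y_0,v_0)$, and then $\l(S_1-S_2)g,\phi\r$ by choosing $\phi$ vanishing on the ballistic trace. Optimizing over the width of the approximate delta sources produces the exponents $(n-1)/n$ and $1/2$ in \eqref{eq:stab3}--\eqref{eq:stab4b}, with the logarithmic correction in dimension $n=3$ coming from the borderline integrability of the single-scattering kernel; the right-hand side now depends on the effective error $\eps+C\delta$ in place of $\eps$, which yields the stated estimates.
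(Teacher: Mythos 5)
Your overall strategy is exactly the paper's: move $A_j$ onto the test function by duality (the paper's backward albedo operator $A_{j,{\rm back}}$ is your $A_j^{*}$, via the Green's formula lemma), show that it maps ${\rm Lip}(\Gamma_+)$ boundedly to ${\rm Lip}(\Gamma_-)$ (Corollary \ref{cor_back}), conclude $|\l(A_1-A_2)g,\phi\r|\le\eps+2C\delta$, and then rerun the proof of Theorem \ref{thm:exactsource} with $\eps$ replaced by $\eps+C\delta$.

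However, your proposed proof of the key propagation lemma has a genuine gap. First, the subcriticality hypothesis in force is \eqref{INV}, i.e.\ mere invertibility of $I-K$, which does not make the Neumann series converge geometrically; and even under \eqref{eq:sc2}, where the series does converge in $L^\infty$, there is no reason for it to converge in the Lipschitz norm: each application of $\tilde K=P^*T^*$ is bounded on ${\rm Lip}(X\times V)$ with an operator norm that may exceed $1$, so $\sum_m {\rm Lip}(\tilde K^m\tilde J\phi)$ need not be summable. The paper circumvents this with a bootstrap rather than a series: $I-\tilde K$ is invertible on $L^\infty$ by duality, $\tilde K^2$ is smoothing from $L^\infty$ to $C^\alpha$ and from $C^\alpha$ to ${\rm Lip}$ (Lemma \ref{lem_Ksmooth}, a weakly singular integral estimate), and hence the $L^\infty$ solution $f=(I-\tilde K)^{-1}g$ of $f-\tilde Kf=g$ with $g\in{\rm Lip}$ is a posteriori Lipschitz because $f=g+\tilde Kg+\tilde K^2g+\tilde K^3g+\tilde K^4f$ (Corollary \ref{cor_inv}). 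Second, your reading of the role of \eqref{HL1} is slightly off: even for the unit disk, $\tau_+$ is \emph{not} Lipschitz on $\bar X\times V$ (its gradient blows up at grazing directions, as the Remark in Section \ref{sec:fwd} shows), so \eqref{HL1} alone cannot control the compositions with $\tau_\pm$ uniformly up to $\pa X$. What saves the argument is the hypothesis in \eqref{HL} that $k$ vanishes in a neighborhood of $\pa X$: the cutoff $\rho$ confines all compositions with $\tau_+$ to a compact subset of $X\times V$ where $\tau_+$ is $C^1$, while the boundary contribution $(1-\rho)\tilde J\phi$ is annihilated by $\tilde K$ and handled separately on $\Gamma_-$, where $\tau_+$ \emph{is} $C^1$ up to the closure (Lemmas \ref{ref_lip} and \ref{lem_ext}). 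Once the propagation lemma is in place, the rest of your argument coincides with the paper's.
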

In  the above theorem, the constant $C$ in front of $\delta$ depends on the sub-criticality assumption; see the proofs below.
   
\medskip
 
It is now classical to deduce stability results on the coefficients $(\sigma,k)$ from the stability results on the above functionals $E_j$ and $\tilde E_jk_j$  of $(\sigma,k)$ \cite{B-IP-09}. Let us consider for concreteness the setting developed in \cite{ST-PAMS-09}, which provides a simple setting to present stability estimates. 
  
Inspection of \eqref{eq:stab1} or \eqref{eq:stab3} shows that the integral of $\sigma(x,v)$ between any two points at the domain's boundary is stably reconstructed (and exactly when $\eps=\delta=0$). In other words, we have access to the Radon transform of $\sigma$.
When $\sigma=\sigma(x,v)$ and $k=0$, it is clearly not possible to uniquely reconstruct $\sigma$ from its line integrals. The obstruction to unique reconstruction was analyzed in \cite{ST-PAMS-09}.
  $(\sigma,k)$ and $(\sigma',k')$ are called gauge equivalent if there is a function $\phi(x,v)$ such that $0<\phi_0\leq \phi(x,v)\leq\phi_0^{-1}$, $|v\cdot\nabla \phi|$ bounded, $\phi=1$ on $\partial X\times V$, and
  \begin{displaymath}
  \sigma' = \sigma -v\cdot\nabla \log \phi,\qquad k'(x,v',v)=\dfrac{\phi(x,v)}{\phi(x,v')} k(x,v',v).
\end{displaymath}
Note that $u$ and $\phi u$ then solve the same transport equation and satisfy the same boundary conditions. Let $<\sigma,k>$ be the class of equivalence. What \cite{ST-PAMS-09} show is that the class of equivalence is uniquely and stably characterized by the left-hand side in \eqref{eq:stab1}-\eqref{eq:stab2b}. 

Let ${\mathfrak e}_b(\eps,\delta)$ be the right-hand side in either \eqref{eq:stab1} or \eqref{eq:stab3} and ${\mathfrak e}_s(\eps,\delta)$ be the right-hand side in either \eqref{eq:stab2a}--\eqref{eq:stab2b} or \eqref{eq:stab4a}--\eqref{eq:stab4b} depending on the context of interest. Then following  \cite{ST-PAMS-09}, we have
\begin{theorem}
\label{thm:stabcoefs}
Under the hypotheses of either Theorem \ref{thm:exactsource} or \ref{thm:appsource}, there is a constant $C$ such that
\begin{equation}
\label{eq:stabcoefs}
\|\tau(\tilde\sigma_1-\tilde\sigma_2)\|_\infty \leq C {\mathfrak e}_b(\eps,\delta)\quad \mbox{ and } \quad \|\tilde k_1-\tilde k_2\|_1 \leq C ({\mathfrak e}_b(\eps,\delta)+{\mathfrak e}_s(\eps,\delta)),
\end{equation}
for appropriate elements in the class of equivalence $<\tilde\sigma_j,\tilde k_j>=<\sigma_j,k_j>$, where $\tau$ is defined below \eqref{eq:taupm}.
\end{theorem}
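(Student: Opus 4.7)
The plan is to deduce the coefficient estimates from the integral bounds \eqref{eq:stab1}--\eqref{eq:stab2b} (respectively \eqref{eq:stab3}--\eqref{eq:stab4b}) already established on the attenuation and scattering functionals, and then import the gauge-reduction strategy of \cite{ST-PAMS-09}. The bound $\mathfrak{e}_b$ encodes control of a line integral of $\sigma$ between two boundary points, and $\mathfrak{e}_s$ encodes a single-scattering $L^1$ control, so the task is essentially to invert these two transforms up to the unavoidable gauge ambiguity.

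For the absorption coefficient I would first pass from multiplicative to additive control. The subcriticality hypotheses \eqref{eq:sc1}, \eqref{eq:sc2} together with $\sigma\in L^\infty$ ensure that $E_j(x,y)$ is uniformly bounded above by $1$ and below by a positive constant, so that
\begin{equation*}
|\log E_1(x_0,y_0) - \log E_2(x_0,y_0)| \le C\,|E_1(x_0,y_0)-E_2(x_0,y_0)| \le C\,\mathfrak{e}_b(\eps,\delta)
\end{equation*}
uniformly in $(x_0,v_0)\in\Gamma_-$. In view of \eqref{eq:Exy}, this is an $L^\infty$ bound on the X-ray transform of $\sigma_1-\sigma_2$ along each fixed direction. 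I would then invoke the gauge construction of \cite{ST-PAMS-09}: the functions whose directional X-ray transform vanishes are precisely those of the form $v\cdot\nabla\log\phi$ with $\phi|_{\partial X\times V}=1$, so one can construct $\phi_j$ (with the required bounds $0<\phi_0\le\phi_j\le\phi_0^{-1}$) making $\tilde\sigma_j=\sigma_j-v\cdot\nabla\log\phi_j$ satisfy $\|\tau(\tilde\sigma_1-\tilde\sigma_2)\|_\infty\le C\,\mathfrak{e}_b(\eps,\delta)$. The weight $\tau$ on the left-hand side reflects the inevitable degeneracy of the X-ray transform at tangent rays.

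For the scattering coefficient I would start from \eqref{eq:stab2a}--\eqref{eq:stab2b} (resp.\ \eqref{eq:stab4a}--\eqref{eq:stab4b}) on $\tilde E_jk_j$ and decompose
\begin{equation*}
\tilde E_1 k_1 - \tilde E_2 k_2 = \tilde E_1(k_1-k_2) + (\tilde E_1 - \tilde E_2)\,k_2.
\end{equation*}
Bounding $\tilde E_1$ from above and away from $0$ by subcriticality, the task reduces to controlling $|\tilde E_1-\tilde E_2|$. Since $\tau$ is constant along rays, the bound $\|\tau(\tilde\sigma_1-\tilde\sigma_2)\|_\infty\le C\mathfrak{e}_b$ integrates into a uniform bound on the difference of the two broken-line attenuations appearing in \eqref{eq:Eplus}, whence $|\tilde E_1-\tilde E_2|\le C\,\mathfrak{e}_b$. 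Rearranging and integrating in $(s,v)$ then produces an $L^1$ bound of order $\mathfrak{e}_b+\mathfrak{e}_s$ for $k_1-k_2$ along the line segment. Under the conjugate gauge transformation $\tilde k_j(x,v',v)=(\phi_j(x,v)/\phi_j(x,v'))k_j(x,v',v)$, with the same $\phi_j$ as above, the $L^1$ estimate is preserved modulo a constant depending on $\phi_0$, and integration over all rays yields the advertised bound on $\|\tilde k_1-\tilde k_2\|_1$.

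The main technical obstacle is the quantitative gauge construction: solving a transport-type equation for $\log\phi_j$ with boundary datum $\log\phi_j=0$ on $\partial X\times V$ while simultaneously enforcing the uniform lower and upper bounds on $\phi_j$ and the boundedness of $v\cdot\nabla\log\phi_j$ required by the definition of gauge equivalence. This is precisely the content of the analysis in \cite{ST-PAMS-09} and can be transplanted essentially without modification; with the gauges in hand, the remaining manipulations reduce to the elementary estimate $|e^{-a}-e^{-b}|\le|a-b|$, Fubini's theorem, and ray-by-ray $L^\infty/L^1$ bounds.
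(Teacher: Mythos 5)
Your overall strategy---reduce to the line-integral and single-scattering functionals and then remove the gauge ambiguity following \cite{ST-PAMS-09}---is the same as the paper's, and the absorption half is essentially right once the gauge is made explicit. The quantitative ingredient you defer to \cite{ST-PAMS-09} is in fact the heart of the paper's proof: the explicit choice
\begin{equation*}
\log\phi(x,v)=\int_0^{\tau_-(x,v)}(\sigma_1-\sigma_2)(x-sv,v)\,ds-\frac{\tau_-(x,v)}{\tau(x,v)}\int_{-\tau_-(x,v)}^{\tau_+(x,v)}(\sigma_1-\sigma_2)(x+sv,v)\,ds,
\end{equation*}
applied to $(\sigma_1,k_1)$ only (so $\tilde\sigma_2=\sigma_2$, $\tilde k_2=k_2$). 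With this choice $\tau(\tilde\sigma_1-\sigma_2)$ equals \emph{exactly} the full chord integral of $\sigma_1-\sigma_2$, and the first estimate follows from the elementary bound $|e^s-1|\ge e^{-R}|s|$ for $|s|\le R$.

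The genuine gap is in your scattering argument. The functions $\tilde E_j$ appearing in \eqref{eq:stab2a}--\eqref{eq:stab4b} are built from the original $\sigma_j$ and involve the \emph{partial} line integrals $\int_0^{\tau_-(x,v')}\sigma_j$ and $\int_0^{\tau_+(x,v)}\sigma_j$ from an interior point to the boundary. These partial integrals are precisely what the data do not determine (that is the content of the gauge freedom), so your claim $|\tilde E_1-\tilde E_2|\le C\,{\mathfrak e}_b$ is false in general, and consequently no bound $\|k_1-k_2\|_1\le C({\mathfrak e}_b+{\mathfrak e}_s)$ on the \emph{ungauged} difference can hold. Nor is such an $L^1$ bound ``preserved'' by the gauge: $\tilde k_1-\tilde k_2=\frac{\phi_1(x,v)}{\phi_1(x,v')}k_1-\frac{\phi_2(x,v)}{\phi_2(x,v')}k_2$ is not a bounded multiple of $k_1-k_2$. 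The gauge must be applied \emph{before} the decomposition. The paper writes $\tilde k_1-k_2 = e^{\int_0^{\tau_-(x,v')}\sigma_2+\int_0^{\tau_+(x,v)}\sigma_2}(\tilde E_1k_1-\tilde E_2k_2)+R$, where the remainder $R$ is $k_1$ times a difference of exponentials whose exponents, thanks to the explicit $\phi$ above, reduce to full chord integrals of $\sigma_1-\sigma_2$ weighted by $\tau_\pm/\tau\le1$; these are controlled by ${\mathfrak e}_b$, giving $|R|\le Ce^{8M}|k_1|\,{\mathfrak e}_b$ and then the $L^1$ estimate after integrating in $(s,v)$ and using $\|\tau\sigma_{1,p}\|_\infty<\infty$. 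Your decomposition can be repaired by performing it on the gauged quantities throughout, using that $\tilde E k$ is gauge invariant and that the pointwise bound $|\tilde\sigma_1-\sigma_2|\le C{\mathfrak e}_b/\tau$ controls every partial integral along a ray (since $\tau$ is constant along the ray); but as written the order of operations breaks the proof.
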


  
The remarks stated after Theorem \ref{thm:exactsource} still hold for the estimates provided in Theorems \ref{thm:appsource} and \ref{thm:stabcoefs}. Let us conclude this section by a few remarks.

The estimates in Theorem \ref{thm:stabcoefs} should be interpreted with some care. The stability estimates \eqref{eq:stab1} and \eqref{eq:stab3} provide an estimate on the reconstruction of the {\em line integral}, i.e., the X-ray transform, of $\sigma$, not of $\sigma$ itself. When $\sigma=\sigma(x)$ is independent of the variable $v$, such an information is sufficient to uniquely determine $\sigma$ (by inverse Radon transform \cite{natterer-86}); but {\em not} in the $L^\infty$ sense. The error in \eqref{eq:stabcoefs} holds for (one element in) a class of equivalence, not for every element of that class of equivalence. For instance, when $\sigma=\sigma(x)$, we may construct an element $\tilde \sigma(x,v)$ in the same class of equivalence by assigning to $\tilde \sigma(x,v)$ the line integral of $\sigma(x)$ of direction $v$ passing through $x$ divided by the length of that segment in $X$ (so that the line integrals of $\sigma(x)$ and $\tilde\sigma(x,v)$ agree). Then $\tilde \sigma(x,v)$ is clearly reconstructed with stability in the $L^\infty$ sense, but not $\sigma(x)$ itself. The reconstruction of $\sigma$ then requires inverting a Radon transform, which is a classical problem that is not stable in the $L^\infty$ sense. The same remark holds for the reconstruction of the scattering coefficient $k(x,v',v)$.

The estimates in the preceding three theorems are written for functionals of the coefficients $(\sigma,k)$ and, in the case of  Theorem \ref{thm:stabcoefs}, for classes of equivalence. How such estimates can be converted to estimates on the reconstruction of $(\sigma=\sigma(x),k=k(x,v',v))$ has already been addressed in the literature. We refer the reader to, e.g., \cite{B-IP-09,ST-PAMS-09,W-AIHP-99} for such results.

As we indicated in the introduction, stability estimates for measurement errors in the ${\cal L}(L^1(\Gamma_-,d\xi),L^1(\Gamma_+,d\xi))$ sense have been obtained in several works, for instance \cite{B-IP-09,BJ-IPI-08,ST-PAMS-09,W-AIHP-99}. Our work  \cite{BJ-IPI-08} introduces estimates on the geometry of multiple scattering contributions to the albedo operator that are more precise than is necessary for stability estimates in the ${\cal L}(L^1(\Gamma_-,d\xi),L^1(\Gamma_+,d\xi))$ sense. Such more precise estimates were used in \cite{BJ-HGS-09} as a first attempt to consider more general errors than those that can be analyzed in the ${\cal L}(L^1(\Gamma_-,d\xi),L^1(\Gamma_+,d\xi))$ setting. In the current paper, these estimates on multiple scattering contributions constitute the central tool in the derivation of the results in the above theorems.  The rest of the paper is essentially devoted to the detailed, albeit lengthy, derivation of these estimates.

 


%

\section{Forward Theory}
\label{sec:fwd}

In this section, we introduce the notation on forward transport equations that we follow throughout the paper and recall the relevant results on the forward transport theory in the $L^1$ setting. We next present the results we need in the dual setting of bounded functions. Finally, we consider the setting of transport solutions that are Lipschitz continuous. Such an analysis is necessary as we aim to model measurement errors in the Wasserstein distance.

\subsection{Recall on the $L^1$ framework.}
Using the notation introduced in the preceding section, we make the following assumption:
\begin{eqnarray}
\tau\sigma\in L^\infty(X\times V) \quad \mbox{ and } \quad  \tau\sigma_p(x,v)\in L^\infty(X\times V).\label{Hyp}
\end{eqnarray}
The bounded domain $X$ is assumed to be of class $C^1$.
We introduce the Banach space $W$ which is the completion of $C^1(\bar X\times V)$ with the norm
\begin{equation}
\|u\|_W:=\|\tau^{-1}u\|_{L^1}+\|v\cdot\nabla_x u\|_{L^1}.\label{g1}
\end{equation}
We have $W=\{u:X\times V\to \C\textrm{ measurable }\ |\ (\tau^{-1}u, v\cdot \nabla_x u)\in L^1(X\times V)^2\}$.
The following trace properties hold.

\begin{lemma}[see \cite{CS-OSAKA-99}]
The following operators $J:L^1(\Gamma_-,d\xi)\to W$ and $j_\pm:W\to L^1(\Gamma_+,d\xi)$ are well defined and bounded
\begin{equation*}
J(u)(x,v)=e^{-\int_0^{\tau_-(x,v)}\sigma(x- sv,v)ds}u(x- \tau_-(x,v)v,v)
\end{equation*}
for a.e. $(x,v)\in X\times V$ and for $u\in L^1(\Gamma_-,d\xi)$, and
\begin{equation*}
j_\pm(u)(x,v)=u(x,v)\textrm{ for } (x,v)\in \Gamma_\pm,\ u\in C^1(\bar X\times V),
\end{equation*}
where $\|j_\pm(u)\|\le \|u\|_W$.
\end{lemma}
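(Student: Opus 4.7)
My plan is to handle $J$ and $j_\pm$ separately, using in both cases a change of variables that parametrizes $X\times V$ by trajectories crossing the boundary.

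For the operator $J$, I would first note the standard flow parametrization: each $(x,v)\in X\times V$ can be written uniquely as $(y+tv,v)$ with $(y,v)\in\Gamma_-$ and $t\in[0,\tau_+(y,v)]$, and the Jacobian yields
\begin{equation*}
\int_{X\times V} f(x,v)\,dx\,dv = \int_{\Gamma_-}\int_0^{\tau_+(y,v)} f(y+tv,v)\,dt\,d\xi(y,v).
\end{equation*}
Along such a trajectory $\tau(y+tv,v)=\tau_+(y,v)$ is constant. Since $J(u)(y+tv,v)=e^{-\int_0^t\sigma(y+(t-s)v,v)\,ds}u(y,v)$ and $\sigma\ge 0$, the exponential is bounded by $1$, so
\begin{equation*}
\|\tau^{-1}J(u)\|_{L^1} \le \int_{\Gamma_-}|u(y,v)|\frac{1}{\tau_+(y,v)}\int_0^{\tau_+(y,v)}dt\,d\xi = \|u\|_{L^1(\Gamma_-,d\xi)}.
\end{equation*}
For the transport derivative, $J(u)$ solves $v\cdot\nabla_x J(u)+\sigma J(u)=0$ in the sense of distributions (a direct computation on the explicit formula), so $\|v\cdot\nabla_x J(u)\|_{L^1}=\|\sigma J(u)\|_{L^1}\le \|\tau\sigma\|_\infty\,\|\tau^{-1}J(u)\|_{L^1}$, which is finite by hypothesis \eqref{Hyp}.

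For $j_\pm$, it suffices by density to treat $u\in C^1(\bar X\times V)$ and then extend to $W$ using the completion definition of $W$. Consider $j_+$; the argument for $j_-$ is symmetric under $v\mapsto-v$. Fix $(y,v)\in\Gamma_+$ and for $s\in[0,\tau_-(y,v)]$ write
\begin{equation*}
u(y,v)=u(y-sv,v)+\int_0^s (v\cdot\nabla_x u)(y-rv,v)\,dr.
\end{equation*}
Taking absolute values, averaging in $s$ over $[0,\tau_-(y,v)]$ and using $\int_0^{\tau_-(y,v)}\!\int_0^s=\int_0^{\tau_-(y,v)}\!(\tau_-(y,v)-r)\,dr\le \tau_-(y,v)\!\int_0^{\tau_-(y,v)}$, I get
\begin{equation*}
|u(y,v)|\le \frac{1}{\tau_-(y,v)}\int_0^{\tau_-(y,v)}\!|u(y-sv,v)|\,ds+\int_0^{\tau_-(y,v)}\!|(v\cdot\nabla_x u)(y-rv,v)|\,dr.
\end{equation*}
Integrating against $d\xi$ on $\Gamma_+$ and undoing the flow parametrization (now with $x=y-sv$, noting $\tau_-(y,v)=\tau(x,v)$ along each line), the first piece becomes $\int_{X\times V}\tau^{-1}|u|\,dx\,dv$ and the second becomes $\int_{X\times V}|v\cdot\nabla_x u|\,dx\,dv$. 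This yields exactly $\|j_+u\|_{L^1(\Gamma_+,d\xi)}\le \|u\|_W$.

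The two steps that require a bit of care are the justification of the change of variables (standard, using the Jacobian $|v\cdot\nu(y)|\,d\mu(y)\,dt$ on each tube) and the density extension of $j_\pm$: since $W$ is defined as the completion of $C^1(\bar X\times V)$ in the stated norm, the uniform bound $\|j_\pm u\|_{L^1(\Gamma_\pm,d\xi)}\le\|u\|_W$ on the dense subspace extends uniquely to a bounded operator on all of $W$, consistent with the identification of $W$ as the weighted Sobolev space displayed after \eqref{g1}. I don't anticipate a serious obstacle; the only subtle point is the parametrization-by-trajectories formula, but it is routine once one observes that $\tau$ is the length of the full chord through $(x,v)$ and hence equals $\tau_\pm$ at the corresponding endpoint.
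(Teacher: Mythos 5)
Your argument is correct: the chord (Santal\'o-type) parametrization $dx\,dv = dt\,d\xi$ to bound $\|\tau^{-1}Ju\|_{L^1}$, the identity $v\cdot\nabla_x Ju=-\sigma Ju$ combined with $\tau\sigma\in L^\infty$, and the averaging of $u(y,v)=u(y-sv,v)+\int_0^s v\cdot\nabla_x u$ over the chord to get $\|j_\pm u\|_{L^1(\Gamma_\pm,d\xi)}\le\|\tau^{-1}u\|_{L^1}+\|v\cdot\nabla_x u\|_{L^1}$ are all sound. The paper itself gives no proof and simply defers to \cite{CS-OSAKA-99}; your proof is essentially the standard argument used there, so there is nothing to flag.
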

The linear Boltzmann equation is then recast as the following integral equation 
\begin{equation}
(I-K)u=J\phi,\label{Boltz}
\end{equation}
where $K$ is a bounded operator in $L^1(X\times V,\tau^{-1} dx dv)$ and 
\begin{equation}
K=TP,
\end{equation}
where $T$ and $P$  are bounded operators from $L^1(X\times V)$ to $W$ and from  $L^1(X\times V,\tau^{-1} dx dv)$ to $L^1(X\times V)$, respectively and are given by
\begin{eqnarray}
T\phi(x,v)&:=&\int_0^{\tau_-(x,v)}e^{-\int_0^t\sigma(x-sv,v)ds}\phi(x-tv,v)dt, \\
P\phi(x,v)&:=&\int_V k(x,v',v)\phi(x,v')dv',
\end{eqnarray}
for $\phi\in L^1(X\times V)$ and a.e. $(x,v)\in X\times V$.
The operator $K$ maps $L^1(X\times V,\tau^{-1} dx dv)$ to $W$. 
We also have  $j_-T=0$. We recall the following well known solvability condition for the transport equation.

\begin{lemma}[see for instance \cite{BJ-IPI-08}]
Assume that 
\begin{equation}
I-K\textrm{ is an invertible operator in }L^1(X\times V,\tau^{-1} dx dv).\label{INV}
\end{equation} 
Then $(I-K)^{-1}J$ is a bounded operator from $L^1(\Gamma_-,d\xi)$ to $W$, and the albedo operator $A=j_+(I-K)^{-1}J$ is well defined and bounded.
\end{lemma}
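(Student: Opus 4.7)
The plan is to bootstrap the invertibility hypothesis, which a priori only gives an inverse on $L^1(X\times V,\tau^{-1}dxdv)$, up to the stronger conclusion that $(I-K)^{-1}J$ takes values in $W$, where the outgoing trace $j_+$ is available. The whole argument is a short functional-analytic chase resting on three ingredients already assembled in the excerpt: (i) $J:L^1(\Gamma_-,d\xi)\to W$ is bounded; (ii) $K$ maps $L^1(X\times V,\tau^{-1}dxdv)$ boundedly into $W$ (stated just before the lemma); and (iii) $j_\pm:W\to L^1(\Gamma_\pm,d\xi)$ is bounded.

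First I would observe that $W$ embeds continuously into $L^1(X\times V,\tau^{-1}dxdv)$, directly from the definition of the norm $\|u\|_W=\|\tau^{-1}u\|_{L^1}+\|v\cdot\nabla_x u\|_{L^1}$ in \eqref{g1}. Composing $J$ with this embedding lets me view $J$ as a bounded map from $L^1(\Gamma_-,d\xi)$ into $L^1(X\times V,\tau^{-1}dxdv)$. Invoking hypothesis \eqref{INV}, the operator
\[(I-K)^{-1}J:L^1(\Gamma_-,d\xi)\to L^1(X\times V,\tau^{-1}dxdv)\]
is then bounded by composition. For each $g\in L^1(\Gamma_-,d\xi)$ this delivers a unique $u=(I-K)^{-1}Jg$ in that weaker space, controlled linearly in $\|g\|_{L^1(\Gamma_-,d\xi)}$.

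The second step is the regularity upgrade. Writing the Duhamel identity $u=Jg+Ku$ and exploiting (i) and (ii), both terms on the right-hand side lie in $W$, hence so does $u$, with the quantitative estimate
\[\|u\|_W\le \|Jg\|_W+\|Ku\|_W\le C_1\|g\|_{L^1(\Gamma_-,d\xi)}+C_2\|u\|_{L^1(X\times V,\tau^{-1}dxdv)}.\]
Substituting the bound already obtained on $\|u\|_{L^1(X\times V,\tau^{-1}dxdv)}$ yields $\|u\|_W\le C\|g\|_{L^1(\Gamma_-,d\xi)}$, which is the first assertion of the lemma. The second assertion is then immediate: composing with the bounded trace in (iii) shows that $A=j_+(I-K)^{-1}J:L^1(\Gamma_-,d\xi)\to L^1(\Gamma_+,d\xi)$ is well defined and bounded.

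No step is genuinely hard. The only subtle point, and the one worth flagging, is the recognition that invertibility on the weaker space $L^1(\tau^{-1}dxdv)$ nonetheless delivers a solution with full $W$-regularity, because the right-hand side of $u=Jg+Ku$ automatically lies in $W$ once (i) and (ii) are acknowledged. This is the mechanism that lets us define the albedo operator without needing to assume that $I-K$ is invertible on $W$ itself.
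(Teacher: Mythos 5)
Your argument is correct and is the standard one: the paper itself does not prove this lemma (it is recalled from \cite{BJ-IPI-08}), and the bootstrap you describe --- boundedness of $(I-K)^{-1}J$ into $L^1(X\times V,\tau^{-1}dx\,dv)$ via the continuous embedding $W\hookrightarrow L^1(X\times V,\tau^{-1}dx\,dv)$, followed by the regularity upgrade through $u=Jg+Ku$ using that $J$ and $K=TP$ land boundedly in $W$ --- is exactly the mechanism behind the cited result. Nothing to add.
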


We will use the following duality property
\begin{eqnarray*}
&&\big(L^1(X\times V,\tau^{-1}dx dv)\big)^*=\tau^{-1}L^\infty(X\times V):=\{\tau^{-1} g\ |\ g\in L^\infty(X\times V)\},\\
&&P^*u(x,v)=\int_V k(x,v,v')u(x,v')dv'
\end{eqnarray*}
for $u\in \tau^{-1}L^\infty(X\times V)$ and for a.e. $(x,v)\in X\times V$.
The operator $P^*$ is the dual operator for $P$ and is bounded from $L^\infty(X\times V)$ to $\tau^{-1}L^\infty(X\times V)$.
We compute $T^*$ the dual operator of $T$ which is bounded from $\tau^{-1}L^\infty(X\times V)$ to $L^\infty(X\times V)$.
For any $u\in L^1(X\times V)$ and $\phi\in \tau^{-1}L^\infty(X\times V)$ we obtain by successive changes of variables
\begin{eqnarray*}
&&\int_{X\times V}\hspace{-.5cm} Tu(x,v) \phi(x,v)dx dv=\int_{X\times V}\int_0^{\tau_-(x,v)} \hspace{-.5cm} e^{-\int_0^t \sigma(x-sv,v)ds}u(x-tv,v)dt\phi(x,v)dxdv\\
&=&\int_{\Gamma_-}\int_0^{\tau_+(x,v)}\int_0^r e^{-\int_0^t \sigma(x+(r-s)v,v)ds}u(x+(r-t)v,v)dt\phi(x+rv,v)dr d\xi(x,v)\\
&=&\int_{\Gamma_-}\int_0^{\tau_+(x,v)}\int_0^r e^{-\int_t^r \sigma(x+sv,v)ds}u(x+tv,v)dt\phi(x+rv,v)dr d\xi(x,v)\\
&=&\int_{\Gamma_-}\int_0^{\tau_+(x,v)}u(x+tv,v)\int_t^{\tau_+(x,v)}e^{-\int_t^r \sigma(x+sv,v)ds}\phi(x+rv,v)dr dt d\xi(x,v)\\
&=&\int_{\Gamma_-}\int_0^{\tau_+(x,v)}u(x+tv,v)\int_0^{\tau_+(x+tv,v)}e^{-\int_0^r \sigma(x+(t+s)v,v)ds}\phi(x+tv+rv,v)dr dt d\xi(x,v).
\end{eqnarray*}
Hence 
\begin{eqnarray*}
\int_{X\times V}Tu(x,v) \phi(x,v)dx dv
=\int_{X\times V}u(x,v)\int_0^{\tau_+(x,v)} e^{-\int_0^r \sigma(x+sv,v)ds}\phi(x+rv,v)dr dx dv,
\end{eqnarray*}
and we obtain
\begin{eqnarray*}
T^*\phi(x,v)=\int_0^{\tau_+(x,v)} e^{-\int_0^r \sigma(x+sv,v)ds}\phi(x+rv,v)dr,\ \textrm{a.e. }(x,v)\in X\times V.
\end{eqnarray*}
The operator $K^*=P^*T^*$ is a bounded operator in $\tau^{-1}L^\infty(X\times V)$.

\subsection{Consequences on the backward equation.}
The backward equation is given formally by 
\begin{eqnarray}
&&v\cdot\nabla_x u-\sigma u=-\int_\V k(x,v,v')u(x,v')dv',\ (x,v)\in X\times V,\label{back}\\
&&u_{|\Gamma_+}=\phi.
\end{eqnarray}
The first-order PDE is understood in the weak sense when acting on functions of $C^1(X\times V)$ compactly supported in $X\times V$.

First, let us introduce the Banach space $W^\infty:=\{u\in L^\infty(X\times V)\ |\ \tau v\cdot \nabla_x u\in L^\infty(X\times V)\}$ endowed with the norm
\begin{equation}
\|u\|_{W^\infty}=\|u\|_{L^\infty}+\|\tau v\cdot \nabla_x u\|_{L^\infty}.
\end{equation} 
Then, the following trace properties hold (see also for instance \cite{ES-ApplAnal-93}): The operators $\tilde J$ and $\tilde j$ are well defined bounded operators, where
\begin{equation*}
\tilde J:L^\infty(\Gamma_+)\to W^\infty,\ \tilde J(u)(x,v)=e^{-\int_0^{\tau_+(x,v)}\sigma(x+ sv,v)ds}u(x+\tau_+(x,v)v,v),\label{def_J}
\end{equation*}
for $u\in L^\infty(\Gamma_+)$ and for a.e. $(x,v)\in \Gamma_+$,
and 
\begin{equation*}
\tilde j:W^\infty\to L^\infty(\Gamma_-),
\end{equation*}
\begin{equation}
\tilde j(u)(x,v):={1\over \tau_+(x,v)}\int_0^{\tau_+(x,v)}\hskip -1cm\big(u(x+ tv,v)-(\tau_+(x,v)-t)v\cdot\nabla_x u(x+ tv,v)\big)dt,\label{def_j}
\end{equation}
for $u\in W^\infty$ and for a.e $(x,v)\in \Gamma_-$.
We next introduce the bounded operator $\tilde K:L^\infty(X\times V) \to W^\infty$ defined by 
\begin{eqnarray}
\tilde K=T^*P^*.
\end{eqnarray}
Then the boundary value problem \eqref{back} is equivalent to the following integral equation
\begin{equation}
(I-\tilde K)u=\tilde J\phi.
\end{equation}

By duality, condition \eqref{INV} is equivalent to the invertibility of the bounded operator $I-K^*$ in $\tau^{-1}L^\infty(X\times V)$.
\begin{lemma}
\label{lem_back}
Under condition \eqref{INV}, the bounded operator $I-\tilde K$ in $L^\infty(X\times V)$ is invertible and its inverse is given by 
$$
(I-\tilde K)^{-1}=I+T^*(I-K^*)^{-1}P^*.
$$
In addition, $(I-\tilde K)^{-1}$ is bounded from $L^\infty(X\times V)$ to  $W^\infty$ and the backward albedo operator $A_{\rm back}=\tilde j(I-\tilde K)^{-1}\tilde J$ is a bounded operator from $L^\infty(\Gamma_+)$ to $L^\infty(\Gamma_-)$.
\end{lemma}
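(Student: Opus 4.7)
\medskip
\noindent\textbf{Proof plan for Lemma~\ref{lem_back}.} The strategy is purely algebraic: I would invert $I-\tilde K$ by plugging the candidate formula into the definition and checking the identity using $\tilde K=T^*P^*$ together with $K^*=P^*T^*$, and then read off boundedness from the mapping properties of $T^*$, $P^*$, and $(I-K^*)^{-1}$ one factor at a time. The duality remark just before the lemma already tells us that \eqref{INV} is equivalent to $I-K^*$ being an invertible bounded operator on $\tau^{-1}L^\infty(X\times V)$, so $(I-K^*)^{-1}$ is available as a bounded operator on that space. The rest of the proof is bookkeeping.

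The first step is to check that the three factors making up $T^*(I-K^*)^{-1}P^*$ compose on the correct spaces. From the explicit formula for $P^*$, the bound $|P^*u(x,v)|\le\sigma_p(x,v)\|u\|_\infty$ combined with $\tau\sigma_p\in L^\infty$ gives $P^*:L^\infty(X\times V)\to\tau^{-1}L^\infty(X\times V)$. From the explicit formula for $T^*$ and the fact that the chord length $\tau(x+rv,v)=\tau(x,v)$ is constant along a ray, one obtains $\|T^*\phi\|_\infty\le\|\tau\phi\|_\infty$. Differentiation along the characteristic gives $v\cdot\nabla_x T^*\phi=-\phi+\sigma T^*\phi$, so $\tau v\cdot\nabla_x T^*\phi=-\tau\phi+\tau\sigma\,T^*\phi\in L^\infty$, i.e.\ $T^*:\tau^{-1}L^\infty(X\times V)\to W^\infty$ is bounded. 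Combined with the boundedness of $(I-K^*)^{-1}$ on $\tau^{-1}L^\infty$, this yields
\[
I+T^*(I-K^*)^{-1}P^*:L^\infty(X\times V)\longrightarrow W^\infty\ \text{bounded}.
\]

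Next, I would verify the algebraic identities
\[
(I-\tilde K)\bigl(I+T^*(I-K^*)^{-1}P^*\bigr)=I,\qquad \bigl(I+T^*(I-K^*)^{-1}P^*\bigr)(I-\tilde K)=I,
\]
as operators on $L^\infty(X\times V)$. Substituting $\tilde K=T^*P^*$ and using $P^*T^*=K^*$, the left-hand side of the first identity expands to
\[
I+T^*(I-K^*)^{-1}P^*-T^*P^*-T^*K^*(I-K^*)^{-1}P^*
=I+T^*\bigl[(I-K^*)-K^*\bigr](I-K^*)^{-1}P^*\cdot 0\ldots
\]
i.e. the three non-identity terms collapse to $T^*(I-K^*)(I-K^*)^{-1}P^*-T^*P^*=0$; the reverse composition is handled symmetrically using $T^*P^*T^*=T^*K^*$. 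This gives the claimed formula for $(I-\tilde K)^{-1}$ and, combined with the previous paragraph, shows it is bounded $L^\infty\to W^\infty$.

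Finally, for the backward albedo operator, the lemma recalled just before Lemma~\ref{lem_back} (and its analogue for $\tilde J$ stated in the excerpt) gives $\tilde J:L^\infty(\Gamma_+)\to W^\infty$ bounded, and the formula for $\tilde j$ gives $\tilde j:W^\infty\to L^\infty(\Gamma_-)$ bounded. Composing, $A_{\rm back}=\tilde j(I-\tilde K)^{-1}\tilde J$ is a bounded operator $L^\infty(\Gamma_+)\to L^\infty(\Gamma_-)$. The only mild obstacle I anticipate is making sure at each stage that the weight $\tau^{-1}$ (or $\tau$) on the intermediate space lines up with what $T^*$ and $P^*$ require, which is why I would treat the mapping properties of $T^*$ and $P^*$ carefully first; once those are in place, both the inversion identity and the boundedness of $A_{\rm back}$ are immediate.
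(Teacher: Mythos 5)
Your proposal is correct and follows exactly the argument the paper intends (and leaves implicit): the resolvent identity $(I-T^*P^*)^{-1}=I+T^*(I-K^*)^{-1}P^*$ verified algebraically via $K^*=P^*T^*$, combined with the mapping properties $P^*:L^\infty\to\tau^{-1}L^\infty$, $T^*:\tau^{-1}L^\infty\to W^\infty$, and the duality consequence of \eqref{INV}. The only blemish is the garbled display in your expansion (the stray ``$\cdot 0\ldots$''), but the sentence following it states the correct cancellation, so the argument stands.
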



As an application of Green's formula, we have
\begin{lemma}
Assume condition \eqref{INV}. Then
for $f\in L^1(\Gamma_-,d\xi)$ and $\phi\in L^\infty(\Gamma_+)$, we have 
\begin{equation}
\int_{\Gamma_+}Af \phi d\xi=\int_{\Gamma_-}fA_{\rm back}\phi d\xi.
\end{equation}
\end{lemma}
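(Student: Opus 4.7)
The plan is to realize both sides as boundary contributions to $\int_{X\times V}v\cdot\nabla_x(uw)\,dx\,dv$, where $u$ is the forward solution with incoming trace $f$ and $w$ is the backward solution with outgoing trace $\phi$. The scattering contributions in the integrand cancel by Fubini, and Green's formula converts the remaining divergence term into two surface integrals, one on each of $\Gamma_\pm$, which are precisely the two sides of the claimed identity.

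First I would set $u=(I-K)^{-1}Jf\in W$ and $w=(I-\tilde K)^{-1}\tilde J\phi\in W^\infty$; both are well defined by \eqref{INV} and Lemma~\ref{lem_back}. By construction of $A$ and $A_{\rm back}$ one has $u_{|\Gamma_-}=f$, $j_+u=Af$, $w_{|\Gamma_+}=\phi$, and $\tilde j w=A_{\rm back}\phi$. Rewriting the forward and backward transport equations as $v\cdot\nabla_x u=Pu-\sigma u$ and $v\cdot\nabla_x w=\sigma w-P^*w$, the Leibniz rule yields
$$v\cdot\nabla_x(uw)=w\,Pu-u\,P^*w.$$
Integrating in $v$ at fixed $x$ and applying Fubini, the relabeling $(v,v')\leftrightarrow(v',v)$ in the iterated integral gives
$$\int_V\!\!w\,Pu(x,v)\,dv=\int_{V\times V}\!\!w(x,v)k(x,v',v)u(x,v')\,dv'dv=\int_V\!\!u\,P^*w(x,v)\,dv\quad\textrm{for a.e. }x\in X.$$

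Next I would integrate in $x$ and invoke Gauss--Green at each fixed $v$ to obtain
$$0=\int_{X\times V}v\cdot\nabla_x(uw)\,dx\,dv=\int_V\int_{\partial X}(uw)(x,v)\,v\cdot\nu(x)\,d\mu(x)\,dv.$$
Splitting $\partial X\times V=\Gamma_+\cup\Gamma_-$ modulo the grazing set (which has $d\mu\,dv$-measure zero since $X$ is $C^1$) and inserting $d\xi=|v\cdot\nu(x)|\,d\mu(x)dv$ with the appropriate sign on each piece rewrites the last equality as $\int_{\Gamma_+}(Af)\phi\,d\xi=\int_{\Gamma_-}f\,(A_{\rm back}\phi)\,d\xi$.

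The main obstacle is the rigorous justification of these manipulations, since $u$ and $w$ only have weak regularity. The integrability of $v\cdot\nabla_x(uw)$ in $L^1(X\times V)$ follows from combining $w\in L^\infty$ with $v\cdot\nabla_x u\in L^1$ for one term, and $\tau^{-1}u\in L^1$ with $\tau v\cdot\nabla_x w\in L^\infty$ for the other. To apply Green's formula and to identify $j_+u$ and $\tilde j w$ with genuine pointwise traces on $\Gamma_\pm$, I would argue by density, approximating $u$ and $w$ by $C^1(\bar X\times V)$ functions and invoking continuity of the trace maps $j_\pm:W\to L^1(\Gamma_\pm,d\xi)$ and $\tilde j:W^\infty\to L^\infty(\Gamma_-)$ recalled above, so that both sides of the identity pass to the limit.
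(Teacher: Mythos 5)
Your argument is correct and is precisely the route the paper intends: the lemma is stated there as "an application of Green's formula" with no further detail, and your duality computation (cancellation of the scattering terms via Fubini, Gauss--Green on $v\cdot\nabla_x(uw)$, identification of the boundary terms with $Af$ and $A_{\rm back}\phi$ through $j_+$ and $\tilde j$) together with the $L^1$--$L^\infty$ pairing of the weak derivatives is exactly the justification needed. No discrepancy with the paper's approach.
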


\subsection{Lipschitz behavior of the backward transport solutions when the optical parameters are Lipschitz.}

To make the mathematical analysis reasonably straightforward, we now assume that:
\begin{eqnarray} 
&&X\ is\ convex,\nonumber\\ 
&&The\ functions\ \sigma\ and\ k \ are\ Lipschitz\ functions\nonumber\\
&&on\ \bar X\ and \ \bar X\times V\times V 
\ respectively,\label{HL} \\
&&and\ k\ vanishes\ in\ a\ neigborhood\ of\ \pa X\times V\times V
. \nonumber
\end{eqnarray}

Let us denote by ${\rm supp}_X k$ the support of the function $k$ in the $x$ variable, i.e. ${\rm supp}_X k:=\overline{\{y\in X\ |\ k(y,v,v')\not= 0\textrm{ for some }(v,v')\in V^2 \}}$.
Let us introduce a smooth compactly supported function $\rho\in C^\infty(\R^n)$ so that 
\begin{equation}
0\le\rho\le 1,\ \rho(x)=1\textrm{ for }x\in {\rm supp}_X k,\ {\rm supp}\rho\subset X.
\end{equation}

We will use the following property.
We denote by $C^\alpha(X\times V)$ the space of  H\"older continuous functions of order $\alpha$, $0<\alpha<1$, (uniformly) on $X\times V$.
\begin{lemma}
\label{lem_Ksmooth}
Under condition \eqref{HL},
the operator $\tilde K=P^*T^*$ is a bounded operator in ${\rm Lip}(X\times V)$, and 
the operator $\tilde K^2$ is a bounded operator from $L^\infty(X\times V)$ to $C^\alpha(X\times V)$ and from $C^\alpha(X\times V)$ to ${\rm Lip}(X\times V)$, $0<\alpha<1$. 
\end{lemma}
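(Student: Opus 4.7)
The strategy is to analyze the explicit integral representations of $\tilde K=P^*T^*$ and its iterate $\tilde K^2$ directly, exploiting the strong consequences of \eqref{HL}. The crucial input is that by \eqref{HL} there exists $\delta_0>0$ such that $k(x,v,v')=0$ whenever $\d(x,\pa X)<\delta_0$; consequently every function of the form $\tilde K^m\phi$ vanishes for such $x$, which restricts the analysis to the region where the exit time $\tau_+(x,v')$ is uniformly Lipschitz in $(x,v')$. (Convexity of $X$ and the $C^1$ regularity of $\pa X$ give a uniform lower bound on $|\nu(y(x,v'))\cdot v'|$ at the exit point $y(x,v')=x+\tau_+(x,v')v'$ as soon as $x$ is bounded away from $\pa X$.)

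For the first claim, I set $F(x,v'):=T^*\phi(x,v')=\int_0^{\tau_+(x,v')}e^{-\int_0^r\sigma(x+sv',v')ds}\phi(x+rv',v')dr$. For $\phi\in{\rm Lip}(X\times V)$, $F$ is Lipschitz in $(x,v')$ uniformly on $\{\d(x,\pa X)\ge \delta_0\}\times V$: the Lipschitz dependence of the attenuation comes from the Lipschitz continuity of $\sigma$, that of the integrand $\phi(x+rv',v')$ from that of $\phi$, and that of the upper limit from the Lipschitz continuity of $\tau_+$ together with the $L^\infty$ bound on the integrand. The representation $\tilde K\phi(x,v)=\int_V k(x,v,v')F(x,v')dv'$ combined with the Lipschitz continuity of $k$ in all variables then yields $\tilde K\phi\in {\rm Lip}(X\times V)$: pairs of points $x_1,x_2$ both inside ${\rm supp}_X k$ are handled by a standard product rule, while pairs straddling the boundary of ${\rm supp}_X k$ use the vanishing of $k$ near $\pa X$ together with the Lipschitz continuity of $k$ to bound the difference by $C|x_1-x_2|$.

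For the smoothing claims on $\tilde K^2$, the change of variables $z=x+rv'$ in $T^*$ recasts $\tilde K$ as a weakly singular spatial integral,
\[
\tilde K\phi(x,v)=\int_X \frac{H(x,v,z)\,\phi\bigl(z,\omega(z,x)\bigr)}{|z-x|^{n-1}}\,dz,\qquad \omega(z,x):=\frac{z-x}{|z-x|},
\]
with $H$ bounded and Lipschitz in $(x,v)$ and supported in $\{\d(x,\pa X)\ge\delta_0\}$ in the $x$-variable. Iterating once gives
\[
\tilde K^2\phi(x,v)=\int_X\!\int_X\! \frac{H(x,v,z)\,H\bigl(z,\omega(z,x),y\bigr)}{|z-x|^{n-1}|y-z|^{n-1}}\,\phi\bigl(y,\omega(y,z)\bigr)\,dy\,dz.
\]
For $\phi\in L^\infty$, to prove $C^\alpha$ regularity in $x$ I split the $z$-integration into the near region $\{|z-x_1|+|z-x_2|\le 2|x_1-x_2|\}$, on which $\int |z-x|^{1-n}dz\lesssim |x_1-x_2|$, and its complement, on which the pointwise estimate $\bigl||z-x_1|^{1-n}-|z-x_2|^{1-n}\bigr|\lesssim |x_1-x_2|/|z-x_1|^n$, combined with $\int_X|y-z|^{1-n}dy\le C$, yields (after logarithmic integration in critical dimensions) a bound of the form $C\|\phi\|_\infty|x_1-x_2|^{\alpha}$ for any prescribed $\alpha<1$. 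For the map $\tilde K^2:C^\alpha\to{\rm Lip}$, the same decomposition applies, and the Hölder bound on $\phi$ supplies an additional factor of $|x_1-x_2|^\alpha$ (through the Hölder dependence of $H(z,\omega(z,x),y)$ on the outer variable $x$ via $\omega(z,x)$), boosting the previous $O(|x_1-x_2|^\alpha)$ estimate to $O(|x_1-x_2|)$. Lipschitz regularity in $v$ is automatic throughout, since $v$ enters $\tilde K^2\phi(x,v)$ only through the Lipschitz factor $k(x,v,\cdot)$ hidden in $H(x,v,\cdot)$.

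The technical heart of the argument is the bound $\tilde K^2:L^\infty\to C^\alpha$: the angular argument $\omega(z,x)$ at which $\phi$ is evaluated in the single-iterate formula degenerates as $z\to x$, so a single $\tilde K$ does not smooth a bounded $\phi$. Only after one further iteration does the internal spatial integration over $z$ average out this angular oscillation; quantifying this averaging rigorously, alongside the splitting and convolution-type estimates for products of Riesz-like kernels, constitutes the bulk of the computational work.
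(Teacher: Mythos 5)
Your treatment of the first claim is correct, but it takes a different route from the paper's, and the comparison is instructive. The paper never discusses the regularity of $\tau_+$ at all: it extends $k$ and $\sigma$ by zero, inserts the cutoff $\rho$, and replaces $\int_0^{\tau_+}$ by $\int_{\R}$ (convexity guarantees the extended integrand vanishes for $r>\tau_+$), so the kernel $\Psi$ in \eqref{D1} is a globally Lipschitz compactly supported function and boundedness on ${\rm Lip}(X\times V)$ is immediate. You instead keep $\tau_+$ and argue it is uniformly Lipschitz on $\{\d(x,\pa X)\ge\delta_0\}\times V$; your transversality claim is right, and the clean way to see it is $\tau_+(x,v')\,\nu(y)\cdot v'=(y-x)\cdot\nu(y)\ge \d(x,\pa X)$ since the supporting hyperplane at $y$ lies outside $X$ --- this uses convexity, which \eqref{HL} provides, so your route is legitimate (just be explicit that $H(x,v,z)$ is of the form $\tilde H(x,v,\widehat{z-x},|z-x|)$ with $\tilde H$ Lipschitz, not ``Lipschitz in $(x,v)$'' uniformly in $z$, which fails near $z=x$). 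Note also that you work with the ordering $P^*T^*$ from the lemma's statement while the paper's operative definition and proof use $\tilde K=T^*P^*$; both can be handled, but the later sections use the latter.

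The genuine gap is in the step $\tilde K^2:C^\alpha\to{\rm Lip}$. In your iterated formula $\phi$ is evaluated at $(y,\omega(y,z))$, which does not depend on $x$, so the H\"older bound on $\phi$ cannot interact with ``the H\"older dependence of $H(z,\omega(z,x),y)$ on $x$'' --- these are unrelated --- and in any case a gain of $|x_1-x_2|^\alpha$ on top of an $O(|x_1-x_2|^{\alpha})$ bound does not give $O(|x_1-x_2|)$ unless $2\alpha\ge1$. Worse, the term produced by $H(z,\omega(z,x_1),y)-H(z,\omega(z,x_2),y)$ is controlled pointwise only by ${\rm Lip}(k)\,|x_1-x_2|/|z-x_1|$ on the far region, and
\begin{equation*}
\int_{|z-x_1|\ge |x_1-x_2|}\frac{|x_1-x_2|}{|z-x_1|^{\,n}}\,dz \;\simeq\; |x_1-x_2|\,\big|\ln|x_1-x_2|\big|,
\end{equation*}
so your splitting strategy tops out at a log-Lipschitz modulus, with no visible cancellation to remove the logarithm. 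The repair is precisely the paper's grouping: perform the $y$- and $v'$-integrations first, i.e.\ write $\tilde K^2=(P^*T^*P^*)\circ T^*$ (resp.\ $T^*\circ(P^*T^*P^*)$ for the other ordering), so that the outer operator is the classical weakly singular operator $\int_{X\times V}\Phi(x,z,\widehat{z-x},v,v')\,|z-x|^{1-n}F(z,v')\,dz\,dv'$ acting on the $x$-independent density $F=T^*\phi\in C^\alpha$. Then the standard subtraction $F(z,v')-F(x_1,v')$ supplies the factor $|z-x_1|^\alpha$ that kills the logarithm, and the remaining piece $F(x_1,v')\int\Phi\,|z-x|^{1-n}dz$ is Lipschitz in $x$ because the change of variables $z=x+r\theta$ turns it into an integral of a jointly Lipschitz, compactly supported integrand (the vanishing of $k$ near $\pa X$ again eliminating $\tau_+$). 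This is exactly what the paper's appeal to the weakly singular integral results of Folland and Mikhlin--Pr\"ossdorf for $P^*T^*P^*$ accomplishes; without that cancellation structure the Lipschitz conclusion is not reached.
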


\begin{proof}[Proof of Lemma \ref{lem_Ksmooth}]
We still denote by $\rho$ the multiplication operator by the function $\rho$:
\begin{equation}
\rho(u)(x,v):=
\left\lbrace
\begin{array}{l}
\rho(x)u(x,v)\textrm{ when }(x,v)\in X\times V,\\
0\textrm{ otherwise}.
\end{array}
\right.
\end{equation}

Let us extend $k$ and $\sigma$ by $0$ outside $X\times V^2$ and $X\times V$, respectively, and still denote 
by $k$ and $\sigma$ these extensions. Then $\tilde K=P^*T^*$ can be written as 
\begin{equation}
\tilde K u(x,v)=\int_{V\times \R} \Psi(x,v,v',r)\rho(u)(x+rv,v')dr dv\label{D1}
\end{equation}
for $u\in L^\infty(X\times V)$ where
\begin{equation}
\Psi(x, v,v',r):= e^{-\int_0^r \sigma(x+sv,v)ds}k(x+rv,v,v'), \ \ (x,v,v',r)\in \R^n\times V^2\times \R.
\end{equation}
Under condition \eqref{HL}, $\Psi$ is a compactly supported Lipschitz function on  $\R^n\times V^2\times \R$ and
$\tilde K$ is a bounded operator in ${\rm Lip}(X\times V)$.

We recall the explicit expression of $P^*\tilde K=P^*T^*P^*$:
\begin{equation}
P^*\tilde Ku(x,w)=\int_V k(x,w,v)\int_0^{\tau_+(x,v)} e^{-\int_0^r \sigma(x+sv,v)ds}\int_V k(x+rv,v,v')\rho(u)(x+rv,v')dv' dr dv,
\end{equation}
for  $u\in L^\infty(X\times V)$ and $(x,v)\in X\times V$. Then, we make the change of variables $y=x+rv$, $dy=r^{n-1}dr dv$ and obtain that
\begin{equation}
P^*\tilde Ku(x,w)=\int_{\R^n\times V} {\Phi(x,y-x,\theta,w,v')_{|\theta={y-x\over |y-x|}}\over |x-y|^{n-1}}\rho(u)(y,v')dy dv',\label{D2}
\end{equation}
for $u\in L^\infty(X\times V)$, where
\begin{equation}
\Phi(x,y-x,\theta,w,v')=k(x,w,\theta)k(y,\theta,v')e^{-|y-x|\int_0^1 \sigma(x+\ep (y-x),\theta)ds},\  \label{D3}
\end{equation}
for $(x,y,\theta,w,v')\in \R^n\times \R^n\times V^3$.
Under condition \eqref{HL}, $\Phi$ is a compactly supported Lipschitz function on  $\R^n\times\R^n\times V^3$.
Then, $P^*T^*P^*$ is a bounded operator from $L^\infty(X\times V)$ to $C^\alpha(X\times V)$ (see, for example,  \cite[Theorem 5.25]{F-Tata-83}).
In addition, we have  $P^*T^*P^*$ is a bounded operator from $C^\alpha(X\times V)$ to ${\rm Lip}(X\times V)$ (see, for example, \cite[Chapter 9, Section 7]{MK-AV-86} on weakly singular integrals after approximating $\Phi$ by a sequence of $C^1$ smooth functions $\Phi_n$ that are compactly supported in $\R^n\times\R^n\times V^3$ with the following properties : $\lim_{n\to+\infty}\|\Phi-\Phi_n\|_{L^\infty(\R^n\times\R^n\times V^3)}=0$ and there exists a constant $C$ that does not depend on $\Phi$ so that
$\sup_{n\in \N}{\rm Lip}(\Phi_n)\le C({\rm Lip}\Phi+\|\Phi\|_{L^\infty(\R^n\times\R^n\times V^3)})$).

From \eqref{D2} it follows that $P^*\tilde K=\rho P^*\tilde K$ and that
\begin{equation}
\tilde K^2u(x,w)=\int_{\R}\Theta(x,t,w)\rho P^*\tilde Ku(x+tw,w)dt,\label{D4}
\end{equation}
where 
\begin{equation}
\Theta(x,t,w)=e^{-\int_0^t\sigma(x+sw,w)ds}\rho(x+tw).\label{D5}
\end{equation}
Under condition \eqref{HL}, $\Theta$ is a Lipschitz function on $X\times\R\times V$.
Therefore from \eqref{D4} and the properties of $P^*\tilde K$ given above, we obtain that $\tilde K^2$ is a bounded operator from $L^\infty(X\times V)$ to $C^\alpha(X\times V)$  and  from $C^\alpha(X\times V)$ to ${\rm Lip}(X\times V)$.
\end{proof}

\begin{corollary}
\label{cor_inv}
Under conditions \eqref{INV} and \eqref{HL}, $(I-\tilde K)$ is a bounded and invertible operator in the Banach space ${\rm Lip}(X\times V)$.
\end{corollary}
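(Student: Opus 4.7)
The plan is to leverage the two ingredients already in hand: the $L^\infty$--invertibility of $I-\tilde K$ granted by Lemma \ref{lem_back}, and the smoothing properties of $\tilde K^2$ established in Lemma \ref{lem_Ksmooth}. Boundedness of $I-\tilde K$ on ${\rm Lip}(X\times V)$ is immediate from the first assertion of Lemma \ref{lem_Ksmooth} together with continuity of the identity, so the real content is invertibility and boundedness of the inverse.

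The key step is a one-step bootstrap. Fix $f\in{\rm Lip}(X\times V)\subset L^\infty(X\times V)$. By Lemma \ref{lem_back}, there exists a unique $u\in L^\infty(X\times V)$ with $(I-\tilde K)u=f$, and moreover $\|u\|_{L^\infty}\le C_1\|f\|_{L^\infty}$ with $C_1=\|(I-\tilde K)^{-1}\|_{L^\infty\to L^\infty}$. Rewriting the identity as $u=f+\tilde K u$ and substituting once into itself yields
\begin{equation*}
u=f+\tilde K f+\tilde K^2 u.
\end{equation*}
Now I would invoke Lemma \ref{lem_Ksmooth}: since $f\in{\rm Lip}(X\times V)$, the term $\tilde K f$ lies in ${\rm Lip}(X\times V)$ with $\|\tilde Kf\|_{\rm Lip}\le C_2\|f\|_{\rm Lip}$; since $u\in L^\infty(X\times V)$, the term $\tilde K^2 u$ lies in ${\rm Lip}(X\times V)$ (by first mapping into $C^\alpha(X\times V)$ and then into ${\rm Lip}(X\times V)$), with $\|\tilde K^2 u\|_{\rm Lip}\le C_3\|u\|_{L^\infty}\le C_3 C_1\|f\|_{L^\infty}\le C_3C_1\|f\|_{\rm Lip}$. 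Adding the three contributions gives $u\in{\rm Lip}(X\times V)$ and
\begin{equation*}
\|u\|_{\rm Lip}\le \bigl(1+C_2+C_1C_3\bigr)\|f\|_{\rm Lip}.
\end{equation*}

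This bound proves that $(I-\tilde K):{\rm Lip}(X\times V)\to{\rm Lip}(X\times V)$ is surjective with a bounded right inverse; injectivity on ${\rm Lip}(X\times V)$ follows from injectivity on the larger space $L^\infty(X\times V)$. Hence $(I-\tilde K)$ is a bounded bijection of the Banach space ${\rm Lip}(X\times V)$, and its inverse is bounded (either by the explicit bound just derived, or by the open mapping theorem). I do not anticipate any serious obstacle here beyond what is already encoded in Lemma \ref{lem_Ksmooth}: the smoothing $L^\infty\to C^\alpha\to{\rm Lip}$ for $\tilde K^2$ is precisely what allows the single iteration of the resolvent identity to convert $L^\infty$ control of $u$ into ${\rm Lip}$ control, and without that two-step gain one could not close the argument by a mere bootstrap of finite order.
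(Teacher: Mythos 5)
Your overall strategy---bootstrap the resolvent identity using the smoothing of powers of $\tilde K$, with injectivity inherited from the $L^\infty$ theory---is exactly the paper's, but your single-step bootstrap does not close. The gap is in the claim that $\tilde K^2 u\in{\rm Lip}(X\times V)$ for $u\in L^\infty(X\times V)$ ``by first mapping into $C^\alpha(X\times V)$ and then into ${\rm Lip}(X\times V)$.'' Lemma \ref{lem_Ksmooth} asserts two separate mapping properties of the \emph{same} operator $\tilde K^2$: it is bounded from $L^\infty$ to $C^\alpha$, and bounded from $C^\alpha$ to ${\rm Lip}$. It does \emph{not} assert that $\tilde K^2$ maps $L^\infty$ into ${\rm Lip}$; the operator that ``first maps into $C^\alpha$ and then into ${\rm Lip}$'' is the composition $\tilde K^2\circ\tilde K^2=\tilde K^4$. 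So from $u=f+\tilde Kf+\tilde K^2 u$ you can only conclude $\tilde K^2u\in C^\alpha(X\times V)$, which leaves $u\in C^\alpha$, not $u\in{\rm Lip}$. (And this is not merely a citation issue: $\tilde K^2=T^*(P^*T^*P^*)$ applies one weakly singular integral to an $L^\infty$ function, which genuinely gains only H\"older regularity; the second application is what upgrades $C^\alpha$ to Lipschitz.)

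The fix is immediate and is what the paper does: iterate the identity to fourth order, $u=f+\tilde Kf+\tilde K^2f+\tilde K^3f+\tilde K^4u$. The first four terms are Lipschitz because $\tilde K$ is bounded on ${\rm Lip}(X\times V)$ (first part of Lemma \ref{lem_Ksmooth}), and $\tilde K^4u=\tilde K^2(\tilde K^2u)\in\tilde K^2\bigl(C^\alpha(X\times V)\bigr)\subset{\rm Lip}(X\times V)$ with $\|\tilde K^4u\|_{\rm Lip}\le C\|u\|_{L^\infty}\le CC_1\|f\|_{L^\infty}$. With that correction your quantitative bound on $\|u\|_{\rm Lip}$ (with the constants adjusted) and the conclusion via bijectivity plus the open mapping theorem, or via the explicit estimate, go through exactly as you wrote; the rest of your argument (boundedness of $I-\tilde K$ on ${\rm Lip}$, injectivity inherited from $L^\infty$) is correct and matches the paper.
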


\begin{proof}[Proof of Corollary \ref{cor_inv}]
The boundedness of  $I-\tilde K$ in ${\rm Lip}(X\times V)$ follows from the first statement of Lemma \ref{lem_Ksmooth}.
It remains to prove that $I-\tilde K$ is invertible as a bounded operator in  ${\rm Lip}(X\times V)$. 
Under condition \eqref{INV}, we already know that $I-\tilde K$ is invertible as a bounded operator in  $L^\infty(X\times V)$.
Hence $I-\tilde K$ is still one-to-one when we restrict the operator to the subspace  ${\rm Lip}(X\times V)$ of $L^\infty(X\times V)$.
Therefore we just need to check that $I-\tilde K$ is onto as a bounded  operator in  ${\rm Lip}(X\times V)$.
Let $g\in {\rm Lip}(X\times V)$. Then by \eqref{INV} and Lemma \ref{lem_back} there exists $f\in L^\infty(X\times V)$ so that $f-\tilde Kf=g$. By Lemma \ref{lem_Ksmooth}, it follows that $\tilde K^4 f\in{\rm Lip}(X\times V)$ and $\tilde f=g+\tilde Kg+\tilde K^2g+\tilde K^3 g+\tilde K^4 f\in {\rm Lip}(X\times V)$.
\end{proof}

From now on we will always assume in this Section that
\begin{eqnarray}
&&X\ is\ a\ strictly\ convex\ (in\ the \ strong\ sense)\ bounded\nonumber\\
&& domain\ of\ class\ C^2.\label{HL1}
\end{eqnarray}

\begin{lemma}
\label{lem_restrict}
The operator $\tilde j$ is a bounded operator from ${\rm Lip}(X\times V)$ to ${\rm Lip}(\Gamma_-)$.
\end{lemma}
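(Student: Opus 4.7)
The plan is to show that, for $u\in{\rm Lip}(X\times V)$, the operator $\tilde j$ returns exactly the pointwise restriction of $u$ to $\Gamma_-$, i.e.\ $\tilde j(u)(x,v)=u(x,v)$ for $(x,v)\in\Gamma_-$. Once this identity is established, the conclusion is immediate: since $\Gamma_-\subset \partial X\times V$ and $u$ is Lipschitz on $\bar X\times V$, the restriction $u|_{\Gamma_-}$ is Lipschitz on $\Gamma_-$ (equipped with the natural metric inherited from $\partial X\times V$, which is equivalent to the extrinsic $\R^n\times\R^n$ metric because $\partial X$ is a $C^2$ compact manifold under \eqref{HL1}), with Lipschitz constant bounded by ${\rm Lip}(u)$.

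First, I would fix $(x,v)\in\Gamma_-$ and consider the map $t\mapsto u(x+tv,v)$ on $[0,\tau_+(x,v)]$. Since $u$ is Lipschitz on $\bar X\times V$, this map is Lipschitz in $t$, hence absolutely continuous, and Rademacher's theorem (combined with a standard Fubini argument along the line in direction $v$) gives $\frac{d}{dt}u(x+tv,v)=v\cdot\nabla_x u(x+tv,v)$ for a.e.\ $t\in(0,\tau_+(x,v))$. This is the main technical point; it is mild, but it is what allows the calculation below to go through without $C^1$ regularity of $u$.

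Second, I would apply integration by parts to the second term in \eqref{def_j}. Using that $\tau_+(x,v)-t$ equals $\tau_+(x,v)$ at $t=0$ and vanishes at $t=\tau_+(x,v)$,
\begin{equation*}
\int_0^{\tau_+(x,v)}\big(\tau_+(x,v)-t\big)\, v\cdot\nabla_x u(x+tv,v)\,dt
 = -\tau_+(x,v)\, u(x,v) + \int_0^{\tau_+(x,v)} u(x+tv,v)\,dt.
\end{equation*}
Substituting back in the definition \eqref{def_j} of $\tilde j$, the two averages $\tau_+(x,v)^{-1}\int_0^{\tau_+(x,v)}u(x+tv,v)\,dt$ cancel and one is left with $\tilde j(u)(x,v)=u(x,v)$.

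Third, I would conclude the boundedness statement. Since $u\in{\rm Lip}(X\times V)$, for any $(x_1,v_1),(x_2,v_2)\in\Gamma_-$ one has $|\tilde j(u)(x_1,v_1)-\tilde j(u)(x_2,v_2)|=|u(x_1,v_1)-u(x_2,v_2)|\le {\rm Lip}(u)\,(|x_1-x_2|+|v_1-v_2|)$, and $\|\tilde j(u)\|_{L^\infty(\Gamma_-)}\le \|u\|_{L^\infty(X\times V)}$. Hence $\tilde j:\ {\rm Lip}(X\times V)\to {\rm Lip}(\Gamma_-)$ is bounded, with operator norm at most $1$ for the natural Lipschitz seminorm. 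The only subtlety here — not really an obstacle — is verifying integration by parts at the Lipschitz (not $C^1$) level, handled by the absolute continuity argument above.
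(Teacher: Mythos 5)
Your proof is correct and follows essentially the same route as the paper: the key observation in both is that for $u\in{\rm Lip}(X\times V)$ the formula \eqref{def_j} collapses, after integration by parts in $t$, to the pointwise restriction $\tilde j(u)(x,v)=u(x,v)$ on $\Gamma_-$, whence ${\rm Lip}(\tilde j(u))\le{\rm Lip}(u)$. The paper states this identity without detail, so your justification of the integration by parts at the Lipschitz level (absolute continuity along lines plus the a.e.\ chain rule) is a welcome, correct elaboration rather than a departure.
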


\begin{proof}[Proof of Lemma \ref{lem_restrict}]
From \eqref{def_j}, it follows that for $u\in {\rm Lip}(X\times V)$ we have $\tilde j(u)(x,v)=u(x,v)$ for $(x,v)\in \Gamma_+$
where $u$ is extended by continuity to $\bar X\times V$. Therefore, we obtain that
${\rm Lip}(\tilde j(u))\le {\rm Lip}(u)$.
\end{proof}

\begin{lemma}[see, for instance, \cite{DPSU-AdvMath-07}]
\label{ref_lip}
Under our geometric assumptions \eqref{HL1} on $X$, the functions $\tau_\pm$ are $C^1$-functions on $\overline{\Gamma_\mp}$ and on $X\times V$. 
\end{lemma}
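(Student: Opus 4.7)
My plan is to prove the lemma via the implicit function theorem applied to a $C^2$ defining function of $X$, treating the transversal and glancing regimes separately. I describe the argument for $\tau_+$; the case of $\tau_-$ follows by the reflection $v\mapsto -v$, which exchanges $\Gamma_-$ and $\Gamma_+$.

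First, using assumption \eqref{HL1}, I would pick a $C^2$ function $\rho:\R^n\to\R$ with $X=\{\rho<0\}$, $\partial X=\{\rho=0\}$, $|\nabla\rho|\geq c>0$ on $\partial X$, and, after a standard modification using strong convexity of $X$, $w\cdot D^2\rho(x)w\geq c_0>0$ for all $(x,w)\in\overline X\times \S^{n-1}$. Set $F(x,v,t):=\rho(x+tv)$, so that $\tau_+(x,v)$ is the smallest strictly positive root of $F(x,v,\cdot)=0$. For $(x,v)\in X\times V$, and more generally for $(x,v)\in\Gamma_-$ with $v\cdot\nu(x)<0$, the exit point $y:=x+\tau_+(x,v)v\in\partial X$ is transversal, so $\partial_t F(x,v,\tau_+(x,v))=v\cdot\nabla\rho(y)>0$. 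The usual implicit function theorem then gives $\tau_+\in C^1$ locally, with explicit formulas
\begin{equation*}
\nabla_x\tau_+=-\dfrac{\nabla\rho(y)}{v\cdot\nabla\rho(y)},\qquad \nabla_v\tau_+=-\dfrac{\tau_+(x,v)\,\nabla\rho(y)}{v\cdot\nabla\rho(y)}.
\end{equation*}

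The delicate case is the glancing portion of $\overline{\Gamma_-}$, namely $(x_0,v_0)\in\partial X\times V$ with $v_0\cdot\nabla\rho(x_0)=0$, where $\tau_+(x_0,v_0)=0$ and $\partial_tF$ vanishes at $t=0$, so the direct IFT fails. I would use the Taylor remainder expansion
\begin{equation*}
F(x,v,t)=\rho(x)+t\,v\cdot\nabla\rho(x)+\dfrac{t^2}{2}H(x,v,t),\quad H(x,v,t):=2\int_0^1(1-s)\,v\cdot D^2\rho(x+stv)v\,ds,
\end{equation*}
where $H$ is $C^1$ and $H(x_0,v_0,0)=v_0\cdot D^2\rho(x_0)v_0\geq c_0>0$. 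On $\overline{\Gamma_-}$ we have $\rho(x)=0$, so $F=0$ factors as $t\bigl(v\cdot\nabla\rho(x)+\tfrac{t}{2}H(x,v,t)\bigr)=0$ and the nonzero root obeys the fixed-point equation $t=-2\,v\cdot\nabla\rho(x)/H(x,v,t)$. Applying IFT to $\Psi(x,v,t):=t+2\,v\cdot\nabla\rho(x)/H(x,v,t)$ at $(x_0,v_0,0)$, where $\partial_t\Psi=1+O(v\cdot\nabla\rho(x))=1$, yields $\tau_+\in C^1$ on $\overline{\Gamma_-}$ near the glancing set, consistent with $\tau_+=0$ there.

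The main obstacle is to patch these two regimes into a single $C^1$ function on $\overline{\Gamma_-}$: the transversal IFT on the open part of $\Gamma_-$ and the quadratic IFT near glancing define the same $\tau_+$ (both pick out the smallest positive zero of $F$), and what prevents the derivatives from blowing up as one approaches glancing from inside $\Gamma_-$ is precisely the uniform strong convexity in \eqref{HL1}, which keeps $H$ bounded away from zero in a full neighborhood of the glancing set. The $C^1$ statement on $X\times V$ is then the easy interior part of the above, and the statement for $\tau_-$ follows from $\tau_-(x,v)=\tau_+(x,-v)$.
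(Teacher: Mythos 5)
Your proof is correct and follows essentially the same route as the paper: a $C^2$ defining function, the implicit function theorem at transversal exit points for $X\times V$ and the open part of $\Gamma_\mp$, and the order-two Taylor expansion (factoring out $t$ and using the positive Hessian from \eqref{HL1}) to extend $C^1$ regularity up to the glancing set in $\overline{\Gamma_\mp}$. You merely spell out details the paper leaves implicit, such as normalizing the defining function so its Hessian is uniformly positive and patching the two regimes.
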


\begin{proof}
Under \eqref{HL1} let $\chi\in C^2(\R^n)$ be a defining function for $X$, i.e. 
$\chi(x)=0$ and $\nabla \chi(x)\not=0$ when $x\in \pa X$, $\chi(x)<0$ when $x\in X$,
  $\chi(x)>0$ when $x\in \R^n\b \bar X$, and 
\begin{equation}
\textrm{the Hessian matrix }\textrm{Hess}\chi(x)\textrm{ is positive at any point }x\in \pa X.\label{a}
\end{equation}

Then, we prove that $\tau_\pm$ are $C^1$-functions on $X\times V$, resp. $\Gamma_\mp$, by applying the implicit function theorem on
$$
\chi(x\pm tv)=0,\ x\in X\times  V,\textrm{ resp. }\Gamma_\mp ,\ t>0.
$$
Then $\tau_\pm\in C^1(\overline{\Gamma_\mp})$ follows from \eqref{a} and the Taylor expansion
\begin{equation}
\chi(x+ tv)=t\nabla\chi(x)\cdot v+t^2\int_0^1(1-\ep)\textrm{Hess}\chi(x+\ep tv)(v,v)d\ep
\end{equation}
for $(x,v,t)\in \pa X\times V\times \R$ ($\chi(x)=0$).
\end{proof}

\begin{remark}
Note that for $X=\mathbb{D}$, then $\tau_+(x,v)=-2v\cdot x$ for $(x,v)\in \Gamma_-$, and $\tau_+(x,v)=-x\cdot v+\sqrt{1-|x|^2+(x\cdot v)^2}$ for $(x,v)\in X\times  V$. In addition, we have 
$$
\pa_{x_i}\tau_+(x,v)=-v_i+{-x_i+(x\cdot v)v_i\over \sqrt{1-|x|^2+(x\cdot v)^2}},\ i=1, 2,\ (x,v)\in X\times V,
$$
and $\pa_{x_i}\tau_+(x,v)$ is not even bounded on $X\times V$, and $\tau_+$ is not a Lipschitz function on $\bar X\times  V$.
In particular, 
$$
\tilde K \tilde J\phi(x,v)=\tau_+(x,v)|V|,\ (x,v)\in \Gamma_+
$$
for $\phi$ the constant valued function $1$ on $\Gamma_+$
when $\sigma\equiv 0$ on $X\times V$ and $k\equiv 1$ on $X\times V^2$ (which breaks assumption \eqref{HL}). Therefore when $k$ does not vanish at the boundary then $\tilde K$ does not necessarily map ${\rm Lip}(X\times V)$ to ${\rm Lip}(X\times V)$.
  
\end{remark}

\begin{lemma}
\label{lem_ext}
Under the assumptions \eqref{HL} and \eqref{HL1},
the operator $\rho\tilde J$ is bounded from ${\rm Lip}(\Gamma_+)$ to ${\rm Lip}(X\times V)$, and the operator $\tilde j(1-\rho)\tilde J$ is bounded from ${\rm Lip}(\Gamma_+)$
to ${\rm Lip}(\Gamma_-)$. In addition, $\tilde K (1-\rho)\tilde J\equiv0$.
\end{lemma}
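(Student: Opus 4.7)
The statement breaks into three claims, all turning on two properties of $\rho$: it is compactly supported in $X$ and equals $1$ on ${\rm supp}_X k$.

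For boundedness of $\rho\tilde J$ from ${\rm Lip}(\Gamma_+)$ to ${\rm Lip}(X\times V)$, the strategy is to reduce everything to the compact set $K:={\rm supp}\,\rho\subset X$. Since $K\times V$ is a compact subset of the open set $X\times V$ on which Lemma \ref{ref_lip} gives $\tau_+\in C^1$, the function $\tau_+$ has bounded first derivatives on $K\times V$ and is therefore Lipschitz there. Consequently the exit map $(x,v)\mapsto(x+\tau_+(x,v)v,v)$ is Lipschitz from $K\times V$ into $\overline{\Gamma_+}$, and composition with $\phi\in {\rm Lip}(\Gamma_+)$ produces a Lipschitz function. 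The exponential attenuation factor is similarly Lipschitz, using the Lipschitz regularity of $\sigma$ from \eqref{HL} together with the fact that the exponential of a bounded Lipschitz function is Lipschitz. Multiplying these bounded Lipschitz pieces by the $C^\infty$ function $\rho$ yields a bounded Lipschitz function on $K\times V$; since $\rho\tilde J\phi$ vanishes identically outside $K\times V$, it extends as a Lipschitz function on all of $X\times V$, with Lipschitz constant controlled by a universal constant times ${\rm Lip}(\phi)+\|\phi\|_{L^\infty}$.

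For $\tilde j(1-\rho)\tilde J$, the first step is a reduction: for $u\in W^\infty$ continuous along each ray up to the inflow boundary, $\tilde j u=u_{|\Gamma_-}$. This is obtained from the explicit formula \eqref{def_j} by an integration by parts in $t$, using $v\cdot\nabla_x u(x+tv,v)=\frac{d}{dt}u(x+tv,v)$, which cancels the integral of $u$ and leaves the pointwise value at $t=0$. Since $\rho\equiv 0$ in a neighborhood of $\partial X$, and in particular on $\Gamma_-$, the reduction yields $\tilde j(1-\rho)\tilde J\phi=\tilde J\phi_{|\Gamma_-}$. Lipschitz continuity of the right-hand side on $\Gamma_-$ then follows from Lemma \ref{ref_lip} (which provides $\tau_+\in C^1(\overline{\Gamma_-})$) together with the same composition and exponential arguments as in the first claim, now applied on $\overline{\Gamma_-}$.

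The identity $\tilde K(1-\rho)\tilde J\equiv 0$ is the simplest: using the explicit formula for $\tilde K=T^*P^*$ from the proof of Lemma \ref{lem_Ksmooth}, the integrand of $\tilde K\bigl((1-\rho)\tilde J\phi\bigr)(x,v)$ contains the product $k(x+rv,v,v')(1-\rho(x+rv))$, which vanishes pointwise because $k(y,\cdot,\cdot)\equiv 0$ for $y\notin {\rm supp}_X k$ while $\rho\equiv 1$ on ${\rm supp}_X k$. The main obstacle throughout is conceptual rather than computational: $\tilde J\phi$ by itself is not Lipschitz on $\bar X\times V$ in general, as the Remark illustrates with $X=\mathbb{D}$, and one must keep track of how the cutoff $\rho$ both removes this pathological boundary behavior in the first claim and, through the support condition on $k$, annihilates the complementary piece $(1-\rho)\tilde J$ after composition with $\tilde K$ in the third claim.
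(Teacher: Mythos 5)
Your proposal is correct and follows essentially the same route as the paper: explicit formulas for $\rho\tilde J\phi$, $(1-\rho)\tilde J\phi$ and $\tilde j(1-\rho)\tilde J\phi$, Lipschitz regularity of $\tau_+$ on ${\rm supp}\,\rho\times V$ and on $\overline{\Gamma_-}$ from Lemma \ref{ref_lip}, the observation that $\rho$ vanishes on $\partial X$ so that $\tilde j(1-\rho)\tilde J\phi=\tilde J\phi_{|\Gamma_-}$, and the support argument $k(y,\cdot,\cdot)(1-\rho(y))\equiv0$ for the last identity. The only addition is your explicit integration-by-parts verification that $\tilde j u=u_{|\Gamma_-}$, which the paper uses implicitly (cf.\ Lemma \ref{lem_restrict}).
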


\begin{proof}[Proof of Lemma \ref{lem_ext}]
Let $\phi\in {\rm Lip}(\Gamma_+)$. Then from \eqref{def_J}, it follows that
\begin{equation}
\rho\tilde J\phi(x,v)=\rho(x)e^{-\int_0^{\tau_+(x,v)}\sigma(x+ sv,v)ds}\phi(x+\tau_+(x,v)v,v),\label{J1a}
\end{equation}
\begin{equation}
(1-\rho)\tilde J\phi(x,v)=(1-\rho(x))e^{-\int_0^{\tau_+(x,v)}\sigma(x+ sv,v)ds}\phi(x+\tau_+(x,v)v,v),\label{J1b}
\end{equation}
for $(x,v)\in X\times V$, and
\begin{equation}
\tilde j(1-\rho)\tilde J\phi(x,v)=e^{-\int_0^{\tau_+(x,v)}\sigma(x+ sv,v)ds}\phi(x+\tau_+(x,v)v,v),\label{J2}
\end{equation}
for $(x,v)\in \Gamma_-$ ($\rho(y)=0$ for $y\in \pa X$).

From \eqref{J2} and Lemma \ref{ref_lip} ($\tau_+$ is a Lipschitz function on $\Gamma_-$) it follows that $\tilde j(1-\rho)\tilde J\phi\in{\rm Lip}(\Gamma_-)$ and that there exists a constant $C(X,\sigma)$ which depends on $X$, $\|\sigma\|_\infty$ and ${\rm Lip}(\sigma)$ such that
\begin{equation*}
{\rm Lip}(\tilde j(1-\rho)\tilde J\phi)\le C(X,\sigma)({\rm Lip}(\phi)+\|\phi\|_\infty).
\end{equation*} 
This inequality proves that $\tilde j(1-\rho)\tilde J$ is bounded from ${\rm Lip}(\Gamma_+)$ to ${\rm Lip}(\Gamma_-)$.

From \eqref{J1a} and Lemma \ref{ref_lip} ($\tau_+\in C^1({\rm supp}\rho\times V)$), it follows that $\rho\tilde J\phi\in{\rm Lip}(X\times V)$ and that there exists a constant $C(X,\sigma)$ which depends on $X$, $\|\sigma\|_\infty$ and ${\rm Lip}(\sigma)$ so that
\begin{equation*}
{\rm Lip}(\rho\tilde J\phi)\le C(X,\sigma)({\rm Lip}(\phi)+\|\phi\|_\infty).
\end{equation*} 
This inequality proves that $\rho\tilde J$ is bounded from ${\rm Lip}(\Gamma_+)$ to ${\rm Lip}(X\times V)$.

From \eqref{J1b} it follows that ${\rm supp}((1-\rho)\tilde J\phi)\cap({\rm supp}_Xk\times V)=\emptyset$. Hence $\tilde K(1-\rho)\tilde J\phi=0$.
\end{proof}

\begin{corollary}
\label{cor_back}
Under conditions \eqref{INV}, \eqref{HL} and \eqref{HL1},
the backward  albedo operator $A_{\rm back}$ is a bounded operator from ${\rm Lip}(\Gamma_+)$ to ${\rm Lip}(\Gamma_-)$.
\end{corollary}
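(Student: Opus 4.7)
The plan is to decompose $A_{\rm back}=\tilde j(I-\tilde K)^{-1}\tilde J$ into two pieces using the cutoff $\rho$, so that each piece can be handled by a lemma already established in this section. Write
\begin{equation*}
\tilde J\phi=\rho\tilde J\phi+(1-\rho)\tilde J\phi,
\end{equation*}
and treat each term separately in $(I-\tilde K)^{-1}$.

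First I would handle the boundary-layer piece $(1-\rho)\tilde J\phi$. By Lemma \ref{lem_ext} we have $\tilde K(1-\rho)\tilde J\phi\equiv 0$, hence the Neumann series collapses and $(I-\tilde K)^{-1}(1-\rho)\tilde J\phi=(1-\rho)\tilde J\phi$. Consequently
\begin{equation*}
\tilde j(I-\tilde K)^{-1}(1-\rho)\tilde J\phi=\tilde j(1-\rho)\tilde J\phi,
\end{equation*}
and the second half of Lemma \ref{lem_ext} tells us exactly that this map is bounded from ${\rm Lip}(\Gamma_+)$ to ${\rm Lip}(\Gamma_-)$. So this contribution causes no trouble.

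For the interior piece $\rho\tilde J\phi$, I would compose three boundedness statements in sequence. By the first part of Lemma \ref{lem_ext}, $\rho\tilde J$ is bounded from ${\rm Lip}(\Gamma_+)$ to ${\rm Lip}(X\times V)$. By Corollary \ref{cor_inv}, $(I-\tilde K)^{-1}$ is bounded on ${\rm Lip}(X\times V)$. Finally, by Lemma \ref{lem_restrict}, $\tilde j$ is bounded from ${\rm Lip}(X\times V)$ to ${\rm Lip}(\Gamma_-)$. Composing yields that $\tilde j(I-\tilde K)^{-1}\rho\tilde J$ is bounded from ${\rm Lip}(\Gamma_+)$ to ${\rm Lip}(\Gamma_-)$.

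Adding the two pieces gives the claim. The only delicate point is the observation that $\tilde K(1-\rho)\tilde J\equiv 0$, which is what makes the decomposition useful: without the cutoff $\rho$, the function $\tilde J\phi$ need not be Lipschitz up to the boundary (since $\tau_+$ fails to be Lipschitz on all of $\bar X\times V$, as noted in the remark), so applying Corollary \ref{cor_inv} directly to $\tilde J\phi$ would not be legitimate. The cutoff isolates the non-Lipschitz behavior near $\partial X$ into a term on which $\tilde K$ acts trivially, leaving an interior term where the regularity of $\tau_+$ on ${\rm supp}\,\rho\times V$ (Lemma \ref{ref_lip}) suffices.
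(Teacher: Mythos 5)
Your proposal is correct and follows essentially the same route as the paper: the same decomposition $\tilde J\phi=\rho\tilde J\phi+(1-\rho)\tilde J\phi$, the same use of $\tilde K(1-\rho)\tilde J\equiv 0$ to collapse $(I-\tilde K)^{-1}$ on the boundary piece, and the same composition of Lemma \ref{lem_ext}, Corollary \ref{cor_inv} and Lemma \ref{lem_restrict} on the interior piece. Your explicit invocation of Lemma \ref{lem_restrict} for the trace, and the closing remark on why the cutoff is needed, only make explicit what the paper leaves implicit.
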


\begin{proof}[Proof of Corollary \ref{cor_back}]
Let $\phi\in{\rm Lip}(\Gamma_+)$. We write 
\begin{equation}
A_{\rm back}\phi=\tilde j(I-\tilde K)^{-1}\rho\tilde J\phi+\tilde j(I-\tilde K)^{-1}(1-\rho)\tilde J\phi.
\end{equation}
Since $\tilde K(1-\rho)\tilde J\phi=0$, then $(I-\tilde K)^{-1}(1-\rho)\tilde J\phi=(1-\rho)\tilde J$.
Hence we obtain
\begin{equation}
A_{\rm back}=\tilde j(I-\tilde K)^{-1}\rho\tilde J+\tilde j(1-\rho)\tilde J.
\end{equation}
Hence by Corollary \ref{cor_inv} and Lemma \ref{lem_ext} we obtain that $A_{\rm back}$ is a bounded operator from  from ${\rm Lip}(\Gamma_+)$ to ${\rm Lip}(\Gamma_-)$.
\end{proof}

Corollary \ref{cor_back} is used for the proof of Theorem \ref{thm:appsource}. It is established under the assumption that $k$ vanishes on the boundary of the domain (see \eqref{HL}). Then (under the additional assumption \eqref{INV}) solutions of the backward transport equation with Lipschitz boundary condition at  $\Gamma_+$ remain Lipschitz inside $X\times V$ and one can take their traces on $\Gamma_-$. It also allows us to easily study $\tilde K$ in the Banach space ${\rm Lip (X\times V)}$. Whether this assumption can be weakened to allow for scattering coefficients that do not vanish on the boundary is left open. 

\section{Estimates on multiple scattering kernels}
\label{sec:estms}

This central section presents the main technical results on the support of multiple scattering contributions in the albedo operator. The detailed proofs of the results,which revisit and generalize contributions in our earlier works, are postponed to section \ref{sec_app}.

Throughout the section, we assume that $X$ is a bounded convex domain of class $C^1$. 
Under the general assumption \eqref{Hyp},
let us denote by $\alpha_m$ the distributional kernel of the operator $j_+ K^m  J$ (a map from $L^1(\Gamma_-,d\xi)$ to $L^1(\Gamma_+,d\xi)$) and $\gamma_m$ the distributional kernel of the operator  $j_+K^m$ (a map from $L^1(X\times V)$ to $L^1(\Gamma_+,d\xi)$), $m\in \N$. This section provides explicit expressions for these distributional kernels (Lemma \ref{kernel}) as well as estimates when the scattering coefficient $k$ is bounded (Lemma \ref{est2}). We next apply these estimates to control multiple scattering contributions to the albedo operator 
(Propositions \ref{ballistic_test} and \ref{singlescat_test}).

By induction on $m\ge 2$, we define the measurable function $E$ on $X^m$ by the formula
\begin{equation}
  \label{eq:Ek}
  E(x_1,\ldots,x_m) = E(x_1,\ldots,x_{m-1})E(x_{m-1},x_m)\ 
\end{equation}
for a.e. $(x_1,\ldots,x_m)\in X^m$, where the function $E$ is initially defined as a measurable function on $ X^2$ by formula \eqref{eq:Exy}.
We use the same notation for the measurable function $E$ defined on $\pa X\times X^{m-2}\times \pa X$ or on $\pa X\times X^{m-1}$.

For $w\in \R^n$, $w\not=0$, we introduce the notation $\hat w:={w\over |w|}$.

\begin{lemma}
\label{kernel}
Under condition \eqref{Hyp}, the distributional kernel of $j_+K^m$ is the measurable function $\gamma_m$ given on $\Gamma_+\times X\times V$ by the formula
\begin{eqnarray}
\hspace{-2cm}
&&\gamma_m(z_0,v_0,z_m,v_m)=\int_0^{\tau_-(z_0,v_0)}\int_{X^{m-2}}\left[{E(z_0,\ldots, z_m)\over \Pi_{i=1}^{m-1}|z_i-z_{i+1}|^{n-1}}\right.\label{t6}\\
\hspace{-2cm}
&&\left.\times \Pi_{i=1}^mk(z_i,v_i,v_{i-1})\right]_{ v_i=\widehat{z_i-z_{i+1}},\ i=1\ldots m-1,\ z_1=z_0-tv_0}\nonumber
  dtdz_2\ldots dz_{m-1},\nonumber
\end{eqnarray}
for a.e. $(z_0,v_0,z_m,v_m)\in \Gamma_+\times X\times V$, $m\ge 3$. 
The distributional kernel of $j_+KJ$ is the distribution  $\alpha_1\in C_0(\Gamma_+\times\Gamma_-)'$ defined by the formula
\begin{eqnarray}
&&\alpha_1(z_0,v_0,z_2,v_1)=\nonumber\\
&&\int_0^{\tau_-(z_0,v_0)}\hskip -1cm E(z_0,z_0-tv_0,z_0-tv_0-\tau_-(z_0-tv_0,v_1)v_1)k(z_0-tv_0,v_1,v_0)\nonumber\\
&&\times\delta_{z_0-tv_0-\tau_-(z_0-tv_0,v_1)v_1}(z_2)dt,\ (z_0,v_0,z_2,v_1)\in \Gamma_+\times \Gamma_-,\label{t7b}
\end{eqnarray}
and for $m\ge 2$ the distributional kernel of $j_+K^mJ$ is the measurable function $\alpha_m$ on $\Gamma_+\times \Gamma_-$ given by
\begin{eqnarray}
&&\alpha_2(z_0,v_0,z_3,v_2)=|\nu(z_3)\cdot v_2|\int_0^{\tau_+(z_3,v_2)}\int_0^{\tau_-(z_0,v_0)}\!\!\!\!\!\!\!E(z_0,z_0-tv_0,z_2,z_3)\nonumber\\
&&\times\Big({k(z_0-tv_0,v_1,v_0)k(z_2,v_2,v_1)\over |z_0-tv_0-z_2|^{n-1}}
\Big)_{\big|{z_2=z_3+sv_2\atop 
v_1= \widehat{z_0-tv_0-z_2}}}dt ds\label{t7c}
\end{eqnarray}
for a.e. $(z_0,v_0,z_2,v_1)\in \Gamma_+\times \Gamma_-$, while
\begin{eqnarray}
&&\alpha_m(z_0,v_0,z_{m+1},v_m)=\nonumber\\
&&|\nu(z_{m+1})\cdot v_m|\int_0^{\tau_+(z_{m+1},v_m)}\int_0^{\tau_-(z_0,v_0)}\int_{X^{m-2}}\left[{E(z_0,\ldots, z_{m+1})\over \Pi_{i=1}^{m-1}|z_i-z_{i+1}|^{n-1}}\right.\label{t7}\\
&&\hskip -2cm\left.\times \Pi_{i=1}^mk(z_i,v_{i-1},v_i)\right]_{|z_1=z_0-tv_0,\ z_m=z_{m+1}+sv_m,\ v_i=\widehat{z_i-z_{i+1}},\ i=1\ldots m}\nonumber
  dt dsdz_2\ldots dz_{m-1},
\end{eqnarray}
for a.e. $(z_0,v_0,z_{m+1},v_m)\in \Gamma_+\times \Gamma_-$, $m\ge 3$.

The distributional kernel of $K^m$ is the measurable function $\beta_m$ also given by the right-hand side of \eqref{t6} for a.e.  $(z_0,v_0,z_m,v_m)\in (X\times V)^2$, $m\ge 3$.
\end{lemma}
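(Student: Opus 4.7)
The plan is to establish these formulas by iterating $K = TP$ and applying repeated polar-coordinate changes of variables. The key identity is that for any $z \in X$, the map $(t, v) \in [0, \tau_-(z, v)] \times V \mapsto z' = z - t v \in X$ is a measure-preserving bijection (up to null sets) with Jacobian $dt\,dv = dz'/|z - z'|^{n-1}$ and $v = \widehat{z - z'}$. I would begin by unfolding $K^m u(z_0, v_0)$ using the explicit formulas for $T$ and $P$. Each application of $K$ introduces a line-integral variable $t_i$ followed by a scattering direction $v_{i+1}$, so after $m$ iterations one obtains a $2m$-fold nested integral. Setting $z_{i+1} := z_i - t_i v_i$, the integrand factors as $\prod_{i=0}^{m-1} E(z_i, z_{i+1}) k(z_{i+1}, v_{i+1}, v_i) \cdot u(z_m, v_m)$ by the multiplicative property \eqref{eq:Ek}. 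Applying the polar-coordinate identity above to each pair $(t_i, v_i)$ for $i = 1, \ldots, m-1$ converts those pairs into interior positions $z_2, \ldots, z_{m-1} \in X$, with Jacobian weight $\prod_{i=1}^{m-1} |z_i - z_{i+1}|^{-(n-1)}$ and the geometric constraint $v_i = \widehat{z_i - z_{i+1}}$. The line integral in $t_0$ survives because $v_0$ is a fixed external parameter. Taking $z_0 \in \Gamma_+$ yields \eqref{t6} for $\gamma_m$; the same argument with $z_0 \in X$ yields the identical formula for $\beta_m$.

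For the kernels $\alpha_m$ of $j_+ K^m J$, one further composes with $Jg(y, v) = E(y, y - \tau_-(y, v) v)\, g(y - \tau_-(y, v) v, v)$, evaluating $g$ at the boundary point $z_{m+1} := z_m - \tau_-(z_m, v_m) v_m \in \partial X$. When $m = 1$, no further integration is available after the single scattering, and the kernel against $d\mu(z_2)\,dv_1$ is the distribution \eqref{t7b} containing a Dirac mass in $z_2$ located at the endpoint of the characteristic determined by $v_1$. When $m \geq 2$, I would apply a second polar-coordinate identity on $X \times V$, this time relative to the entry boundary, namely $dz_m\,dv_m = |\nu(z_{m+1}) \cdot v_m|\,ds\,d\mu(z_{m+1})\,dv_m$ with $z_m = z_{m+1} + s v_m$ and $s \in [0, \tau_+(z_{m+1}, v_m)]$. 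This produces the factor $|\nu(z_{m+1}) \cdot v_m|$ displayed in \eqref{t7c}--\eqref{t7}, replaces the interior $z_m$-integration with an $s$-integration parametrizing the last leg, and absorbs its attenuation into $E(z_0, \ldots, z_{m+1})$.

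The principal difficulty is combinatorial bookkeeping rather than analysis: one must keep track of $m-1$ interior Jacobian factors, the boundary Jacobian introduced by the final $J$, and verify that the attenuation exponentials along successive free-transport segments telescope correctly into $E(z_0, \ldots, z_{m+1})$ through the inductive definition \eqref{eq:Ek}. Measurability of all integrands and the repeated use of Fubini are legitimate under the integrability assumption \eqref{Hyp}, and the resulting expressions can be identified as the claimed distributional kernels by pairing against smooth compactly supported test functions on the relevant product spaces.
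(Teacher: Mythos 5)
Your proposal is correct and follows essentially the same route as the paper: the paper computes $\beta_2$ and $\beta_3$ explicitly via the polar change of variables $dt\,dv = dz'/|z-z'|^{n-1}$, obtains $\beta_m$ by induction, and then composes with $J$ using the boundary parametrization $dz_m\,dv_m = |\nu(z_{m+1})\cdot v_m|\,ds\,d\mu(z_{m+1})\,dv_m$ to produce the $\alpha_m$, exactly the two changes of variables you identify. Your presentation unfolds all $m$ iterations at once rather than inducting, but the substance — the telescoping of attenuations into $E(z_0,\ldots,z_{m+1})$, the surviving $t$-integral because $v_0$ is a kernel variable, and the Dirac mass in \eqref{t7b} for $m=1$ — is the same.
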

The proof of Lemma \ref{kernel} is given in Section \ref{sec_app}.

In the sequel, we will use the following well known results:
\begin{lemma}
\label{lem_int}
For any $(z_0,z)\in \bar X^2$, $z_0\not=z$, we have
\begin{equation}
\int_{\R^n}{dz_1\over |z_0-z_1|^m|z_1-z|^{n-1}}\le {C_m\over |z_0-z|^{m-1}},\ 2\le m\le n-1,\textrm{ when }n\ge 3,\label{f2}
\end{equation}
while
\begin{eqnarray}
&&\int_X{dz_1\over |z_0-z_1||z_1-z|^{n-1}}\le C-C'\ln(|z_0-z|),\label{f1}\\
&&\Big|\int_X{\ln(|z_0-z_1|)dz_1\over|z_1-z|^{n-1}} \Big|\le C_n,\label{f3}
\end{eqnarray}
for some constants $C$, $C'$, $C_m$, $2\le m\le n$, where the constants $C$ and $C_n$ depend only on the diameter of the bounded domain $X$.
\end{lemma}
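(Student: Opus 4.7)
All three estimates are classical weakly-singular integral bounds. My plan for \eqref{f2} and \eqref{f1} is a common rescaling: set $r=|z_0-z|>0$, $\hat w=(z-z_0)/r\in V$, and substitute $z_1=z_0+ry$. Since $dz_1=r^n\,dy$, $|z_0-z_1|=r|y|$, and $|z_1-z|=r|y-\hat w|$, the integral in \eqref{f2} becomes $r^{\,1-m}\int_{\R^n}|y|^{-m}|y-\hat w|^{-(n-1)}\,dy$, and the integral in \eqref{f1} becomes $\int_{(X-z_0)/r}|y|^{-1}|y-\hat w|^{-(n-1)}\,dy$, which I would bound above by the integral over $B(0,D/r)$ with $D=\mathrm{diam}(X)$.

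For \eqref{f2}, it then suffices to check that the $\R^n$ integral is finite and, by rotation invariance, a constant $C_m$ independent of $\hat w$: near $y=0$ the factor $|y|^{-m}$ is integrable since $m\le n-1<n$; near $y=\hat w$ the factor $|y-\hat w|^{-(n-1)}$ is integrable since $n-1<n$; and at infinity the integrand decays like $|y|^{-m-(n-1)}$, which is integrable because $m+n-1>n$ once $m\ge 2$. For \eqref{f1}, the local contributions are handled identically, but at infinity one now has $|y|^{-n}$, which is only logarithmically integrable: splitting at $|y|=2$, the inner piece is a constant depending only on $n$, while on $2\le|y|\le D/r$ one has $|y-\hat w|\ge|y|/2$, so the integrand is at most $2^{n-1}|y|^{-n}$, whose annular integral is $c_n\ln(D/(2r))$; this gives the bound $C-C'\ln r$ of \eqref{f1}.

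For \eqref{f3} I would avoid rescaling and instead apply H\"older's inequality with conjugate exponents $1/p+1/q=1$ chosen so that $(n-1)q<n$ (equivalently $p>n$):
\begin{equation*}
\int_X\frac{|\ln|z_0-z_1||}{|z_1-z|^{n-1}}\,dz_1\le\big\|\ln|z_0-\cdot|\big\|_{L^p(\bar X)}\,\big\||\cdot-z|^{-(n-1)}\big\|_{L^q(\bar X)}.
\end{equation*}
The second factor is finite and uniformly bounded in $z\in\bar X$ precisely because $(n-1)q<n$, and the first factor is finite and uniformly bounded in $z_0\in\bar X$ because $\ln|\cdot|$ lies in every local $L^p$. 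Both bounds depend only on $n$ and $\mathrm{diam}(X)$, yielding \eqref{f3}. The only technical care required across all three estimates is ensuring uniformity in the unit vector $\hat w$ and in $r$ in the rescaled integrals for \eqref{f2} and \eqref{f1}, which is immediate from the explicit bounds above; the argument for \eqref{f3} involves no parameter-dependent integrand and is the most routine of the three.
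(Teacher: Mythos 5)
Your proof is correct. Note that the paper itself offers no proof of Lemma \ref{lem_int}: it is introduced with the phrase ``we will use the following well known results'' and invoked as a classical fact, so there is no argument in the text to compare yours against. Your treatment is a complete and self-contained justification: the rescaling $z_1=z_0+ry$ for \eqref{f2} correctly produces the factor $r^{1-m}$ times a $\hat w$-independent (by rotation invariance) convergent integral, with the three integrability checks (at $0$, at $\hat w$, at infinity) exactly matching the constraints $2\le m\le n-1$; the same rescaling for \eqref{f1} makes the powers of $r$ cancel and the logarithm emerge from the annulus $2\le |y|\le D/r$ via $|y-\hat w|\ge |y|/2$ (with the harmless caveat that this annulus may be empty and that absorbing $\ln(D/2)$ into the constant uses $r\le \mathrm{diam}(X)$); and the H\"older argument for \eqref{f3} with $p>n$ is uniform in $z_0,z\in\bar X$ and depends only on $n$ and $\mathrm{diam}(X)$, as the lemma requires.
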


In the next Lemma, we denote $\sigma_-:=\min(0,\sigma)$.
\begin{lemma}
\label{est2}
Assume that $k\in L^\infty(X\times V^2)$. Then we have
\begin{equation}
\big|{\gamma_{n+2}(z_0,v_0,z,v)\over \tau_-(z_0,v_0)}\big|\le C_{n+2}e^{(n+2)\|\tau\sigma_-\|_\infty}\|k\|_\infty^{n+2},\label{e1}
\end{equation}
for a.e. $(z_0,v_0,x,v)\in \Gamma_+\times X\times V$, where the constant $C_{n+2}$ depends on the diameter of $X$,
\begin{equation}
\begin{array}{rcl}\displaystyle
&& \displaystyle {|\alpha_m(z_0,v_0,z_{m+1},v_m)|\over|\nu(z_{m+1})\cdot v_m|}
\\&\le& \displaystyle  C_m e^{(m+1)\|\tau\sigma_-\|_\infty}\|k\|_\infty^m\int_0^{\tau_-(z_0,v_0)}\hskip -3mm\int_0^{\tau_+(z_{m+1},v_m)}\hskip-1cm{dt ds\over |z_0-tv_0-z_{m+1}-sv_m|^{n-m+1}},\label{e4}
\end{array}\end{equation}
for a.e. $(z_0,v_0,z_{m+1},v_m)\in \Gamma_+\times\Gamma_-$  and some constant $C_m$ for $2\le m\le n$, and
\begin{equation}
\Big\|{\alpha_{n+1}(z_0,v_0,z_{n+2},v_{n+1})\over \tau_-(z_0,v_0)|\nu(z_{n+2})\cdot v_{n+1}|}\Big\|_{L^\infty({\Gamma_+}_{z_0,v_0}\times{\Gamma_-}_{z_{n+2},v_{n+2}})}\le C_{n+1}e^{(n+2)\|\tau\sigma_-\|_\infty}\|k\|_\infty^{n+1},\label{e2}
\end{equation}
for any $n\ge 2$, where the constant $C_{n+1}$ depends on the diameter of $X$.
\end{lemma}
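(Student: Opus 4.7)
The plan is to start from the explicit kernel formulas (\ref{t6})--(\ref{t7}) in Lemma \ref{kernel}. Since the attenuation factor $E(z_0,\ldots,z_{m+1})$ is a product of at most $m+1$ exponentials of line integrals of $\sigma$ along segments of length at most $\tau$ (using that $\tau$ is constant along any given free trajectory), it is pointwise bounded by $e^{(m+1)\|\tau\sigma_-\|_\infty}$. Together with $\prod_i k(\cdot,\cdot,\cdot)\le\|k\|_\infty^m$, this reduces each of the three estimates to a purely geometric bound on $\int_{X^{m-2}}\prod_{i=1}^{m-1}|z_i-z_{i+1}|^{1-n}\,dz_2\cdots dz_{m-1}$, possibly combined with the remaining $t$ and $s$ integrations that produce the $\tau_\pm$ factors.

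For (\ref{e4}) in the range $2\le m\le n$, I would apply (\ref{f2}) inductively for $k=2,\ldots,m-1$: integrating $z_k$ against $|z_1-z_k|^{-(n-k+1)}|z_k-z_{k+1}|^{-(n-1)}$ reduces the product to $|z_1-z_{k+1}|^{-(n-k)}$, with the required exponent $n-k+1$ lying in the admissible range $[2,n-1]$ since $k\le m-1\le n-1$. After these $m-2$ reductions only $|z_1-z_m|^{-(n-m+1)}$ survives, and after the substitutions $z_1=z_0-tv_0$ and $z_m=z_{m+1}+sv_m$ this yields exactly the integrand on the right-hand side of (\ref{e4}).

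For (\ref{e1}), the same iteration for $k=2,\ldots,n-1$ leaves us with the triple $|z_1-z_n|^{-1}|z_n-z_{n+1}|^{1-n}|z_{n+1}-z_{n+2}|^{1-n}$; since (\ref{f2}) no longer applies at the next step, I would invoke (\ref{f1}) to integrate $z_n$ and produce the logarithmic factor $C(1+|\ln|z_1-z_{n+1}||)$, and then (\ref{f3}) to absorb this logarithm during the final integration in $z_{n+1}$. The resulting bound on the full $X^n$ integral is a constant depending only on $\mathrm{diam}(X)$, uniform in $z_1$ and $z_{n+2}$; the remaining $\int_0^{\tau_-(z_0,v_0)}dt$ then supplies the $\tau_-(z_0,v_0)$ prefactor in (\ref{e1}). (In dimension $n=2$ the iteration based on (\ref{f2}) is empty and one invokes (\ref{f1}) and (\ref{f3}) directly.)

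The main obstacle is (\ref{e2}): one spatial integration fewer is available, so the analogous reduction ends with $1+|\ln|z_1-z_{n+1}||$ and we are left with the double integral
\begin{equation*}
\int_0^{\tau_-(z_0,v_0)}\int_0^{\tau_+(z_{n+2},v_{n+1})}\bigl(1+\bigl|\ln|z_0-tv_0-z_{n+2}-sv_{n+1}|\bigr|\bigr)\,ds\,dt,
\end{equation*}
which must be bounded by $C\tau_-(z_0,v_0)$ uniformly in all remaining variables, and in particular uniformly as $v_0$ and $v_{n+1}$ become nearly parallel. To handle this I would freeze $t$ and observe that the inner integral in $s$ has only a one-dimensional logarithmic singularity along a line segment of length $\tau_+\le\mathrm{diam}(X)$, and is therefore bounded by a constant depending only on $\mathrm{diam}(X)$, uniformly in $(z_0,v_0,t,z_{n+2},v_{n+1})$. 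The outer integration in $t$ over $(0,\tau_-(z_0,v_0))$ then supplies the required $\tau_-(z_0,v_0)$ prefactor and completes (\ref{e2}).
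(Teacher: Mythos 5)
Your proposal is correct and follows essentially the same route as the paper: bound $E$ by $e^{(m+1)\|\tau\sigma_-\|_\infty}$ and the product of kernels by $\|k\|_\infty^m$, iterate \eqref{f2} to reduce the chain of weakly singular factors, finish \eqref{e1} with \eqref{f1} and \eqref{f3}, and handle \eqref{e2} via the uniform bound $\sup_{|w|\le 2\,{\rm diam}(X),\,\theta\in V}\int_0^{{\rm diam}(X)}|\ln|w-s\theta||\,ds\le C_0$ before integrating in $t$ to recover the $\tau_-(z_0,v_0)$ prefactor. This is exactly the paper's argument, including the index bookkeeping in the inductive reduction.
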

The proof of Lemma \ref{est2} is given in Section \ref{sec_app}.

\begin{proposition}
\label{ballistic_test}
Assume that \eqref{INV} holds and that $k\in L^\infty(X\times V^2)$.
Let $(z',v')\in \Gamma_-$ and let $\eta>0$, and let $\phi\in L^\infty(\Gamma_+)$ so that ${\rm supp}\phi\subset\{(z,v)\in \Gamma_+\ |\ |z-z'-\tau_+(z',v')v'|\le \eta,\ |v-v'|\le \eta\}$ and $\|\phi\|_\infty\le 1$. Then there exist positive constants such that
\begin{equation}
\sup_{f\in L^1(\Gamma_-,d\xi)\atop \|f\|_{L^1(\Gamma_-,d\xi)}=1}\big|\int_{\Gamma_+}\phi(z,v)j_+KJf(z,v) d\xi(z,v)\big|\le C_1e^{2\|\tau\sigma_-\|_\infty}\|k\|_{\infty}\eta^{n-1},\label{i6}
\end{equation}
\begin{equation}
{\big|\int_{\Gamma_+}\alpha_n(z,v,z'',v'')\phi(z,v)d\xi(z,v)\big|\over |\nu(z'')\cdot v''|}\le C_n e^{(n+1)\|\tau\sigma_-\|_\infty}\|k\|_{\infty}^n\eta^{2n-2}(|\ln(\eta)|+1),\label{i1}
\end{equation}
\begin{equation}
{\big|\int_{\Gamma_+}\alpha_{n+1}(z,v,z'',v'')\phi(z,v)d\xi(z,v)\big|\over |\nu(z'')\cdot v''|}\le C_{n+1}e^{(n+2)\|\tau\sigma_-\|_\infty}\|k\|_{\infty}^{n+1}\eta^{2n-2},\label{i4}
\end{equation}
\begin{equation}
\sup_{f\in L^1(\Gamma_-,d\xi)\atop\|f\|_{L^1(\Gamma_-,d\xi)}=1}\Big|\int_{\Gamma_+}\phi(x,v)j_+K^{n+2}(I-K)^{-1}Jf(x,v)d\xi(x,v)\Big|\le C_{n+2}e^{(n+2)\|\tau\sigma_-\|_\infty}
\|k\|_{\infty}^{n+2}\eta^{2n-2},\label{i5}
\end{equation}
for any $n\ge 2$ and for a.e. $(z'',v'')\in \Gamma_-$,
and 
\begin{equation}
{\big|\int_{\Gamma_+}\alpha_m(z,v,z'',v'')\phi(z,v)d\xi(z,v)\big|\over |\nu(z'')\cdot v''|}\le C_m e^{(m+1)\|\tau\sigma_-\|_\infty}\|k\|_{\infty}^m\eta^{n+m-2},\textrm{ for }2\le m\le n-1,\label{i2}
\end{equation}
when $n\ge 3$ and for a.e. $(z'',v'')\in \Gamma_-$. 
The constants $C_m$, $1\le m\le n+1$, depend on the diameter of $X$ while $C_{n+2}$ depends on the diameter of $X$ and subcritical conditions.
\end{proposition}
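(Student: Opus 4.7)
My approach is to exploit the fact that $\phi$ is supported in an $\eta$-neighborhood of a single ballistic exit direction, and to reduce each estimate to a \emph{uniform} bound over $(z'',v'')\in\Gamma_-$. For $m\ge 2$, Lemma \ref{kernel} and Fubini give
\begin{equation*}
\Big|\int_{\Gamma_+}\phi\cdot j_+K^m J f\,d\xi\Big|\le \|f\|_{L^1(\Gamma_-,d\xi)}\cdot\sup_{(z'',v'')\in\Gamma_-}\frac{\big|\int_{\Gamma_+}\phi(z,v)\alpha_m(z,v,z'',v'')d\xi(z,v)\big|}{|\nu(z'')\cdot v''|},
\end{equation*}
so that \eqref{i1}, \eqref{i2}, \eqref{i4} are pointwise versions of the corresponding sup-over-$f$ statement. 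The same duality handles \eqref{i6}: a direct computation (analogous to the derivation of $T^*$ in Section~\ref{sec:fwd}) shows that
\begin{equation*}
(j_+KJ)^*\phi(z'',v_1)=\int_V\int_0^{\tau_+(z'',v_1)}\!\!\!\!\phi(z_0,v_0)\,E(z_0,z''+rv_1,z'')\,k(z''+rv_1,v_1,v_0)\,dr\,dv_0,
\end{equation*}
where $z_0=(z''+rv_1)+\tau_+(z''+rv_1,v_0)v_0$. Bounding $|\phi|\le 1$, $|k|\le\|k\|_\infty$, $|E|\le e^{2\|\tau\sigma_-\|_\infty}$, and observing that the support of $\phi$ restricts $v_0$ to $B(v',\eta)$ (a set of measure $O(\eta^{n-1})$) while $r\le\mathrm{diam}(X)$, yields \eqref{i6}.

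\textbf{Multiple scattering, $2\le m\le n$.} For \eqref{i1} and \eqref{i2} I would substitute the explicit kernel \eqref{t7}, bound $|k|\le\|k\|_\infty$ and $|E|\le e^{(m+1)\|\tau\sigma_-\|_\infty}$, and reduce to estimating
\begin{equation*}
\int_{\mathrm{supp}\phi}\!\!\!\!d\xi(z_0,v_0)\!\!\int_0^{\tau_-(z_0,v_0)}\!\!\!\!\!\!dt\!\!\int_0^{\tau_+(z'',v'')}\!\!\!\!\!\!ds\int_{X^{m-2}}\!\!\frac{dz_2\cdots dz_{m-1}}{\prod_{i=1}^{m-1}|z_i-z_{i+1}|^{n-1}},
\end{equation*}
with $z_1=z_0-tv_0$ and $z_m=z''+sv''$. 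Iterating \eqref{f2} to integrate out $z_2,\ldots,z_{m-1}$ gives $C_m/|z_1-z_m|^{n-m+1}$ when $m\le n-1$, or a logarithmic expression obtained from \eqref{f1} and \eqref{f3} when $m=n$. Next I would substitute $(y_1,v_0)=(z_0-tv_0,v_0)$ (so $d\xi(z_0,v_0)\,dt=dy_1\,dv_0$), noting that the support of $\phi$ then forces $v_0\in B(v',\eta)$ (measure $O(\eta^{n-1})$) and, for each such $v_0$, confines $y_1$ to a tube of $(n-1)$-dimensional cross-section $O(\eta^{n-1})$ around the ballistic line. Performing the $s$-integration along the backward ray $\{z''+sv''\}$ produces a factor $C/\mathrm{dist}(y_1,L'')^{n-m}$ (resp.\ a logarithm for $m=n$), and transverse polar coordinates around the tube axis give a further $O(\eta^{m-1})$ (resp.\ $O(\eta^{n-1}(1+|\ln\eta|))$). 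Multiplication by the measure of the $v_0$-window yields $O(\eta^{n+m-2})$ and $O(\eta^{2n-2}(1+|\ln\eta|))$, matching \eqref{i2} and \eqref{i1}.

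\textbf{Deep scattering, $m=n+1$ and $m=n+2$.} For \eqref{i4} the uniform kernel bound \eqref{e2} of Lemma \ref{est2} yields $|\alpha_{n+1}|\le Ce^{(n+2)\|\tau\sigma_-\|_\infty}\|k\|_\infty^{n+1}\tau_-(z_0,v_0)|\nu(z'')\cdot v''|$, so dividing by $|\nu(z'')\cdot v''|$ and using $\tau_-\le\|\tau\|_\infty$ reduces the bound to the $d\xi$-measure of $\mathrm{supp}\phi$, which is $O(\eta^{2n-2})$. For \eqref{i5} I would combine \eqref{e1}, which gives $|\gamma_{n+2}(z_0,v_0,z,v)|\le C\tau_-(z_0,v_0)\|k\|_\infty^{n+2}e^{(n+2)\|\tau\sigma_-\|_\infty}$, with the $L^1(\Gamma_-,d\xi)\to L^1(X\times V,\tau^{-1}dxdv)$-boundedness of $(I-K)^{-1}J$ guaranteed by \eqref{INV}, to obtain the pointwise bound $|j_+K^{n+2}(I-K)^{-1}Jf(z_0,v_0)|\le C\tau_-(z_0,v_0)\|f\|_{L^1(\Gamma_-,d\xi)}$; the same $O(\eta^{2n-2})$ measure estimate on $\mathrm{supp}\phi$ then closes \eqref{i5}, with the subcritical constant entering through $\|(I-K)^{-1}\|$.

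\textbf{Main obstacle.} The most delicate step is the geometric tube integration: once Lemma \ref{lem_int} has produced the factor $|z_1-z_m|^{-(n-m+1)}$ (or its logarithmic analogue), one must control the residual $(v_0,y_1,s)$-integration \emph{uniformly} in $(z'',v'')\in\Gamma_-$, the worst configuration being $(z'',v'')$ close to the ballistic source $(z',v')$ so that the backward ray $\{z''+sv''\}$ threads the forward tube nearly tangentially. It is precisely this configuration that produces the logarithmic factor in \eqref{i1} at the borderline $m=n$; one must also check that in dimensions $n\ge 4$ this logarithm does not propagate into \eqref{i2} and that the constants extracted from \eqref{f2} at each of the $m-2$ iteration steps remain uniformly bounded as $m$ ranges over $\{2,\ldots,n\}$.
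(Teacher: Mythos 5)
Your proposal is correct and follows essentially the same route as the paper: the same kernel formulas from Lemma \ref{kernel}, the convolution estimates of Lemmas \ref{lem_int} and \ref{est2}, the $O(\eta^{2n-2})$ measure count on ${\rm supp}\,\phi$ for \eqref{i4}--\eqref{i5}, and the tube reduction around the line $z'+\R v'$ with the elementary integrals \eqref{P1}--\eqref{P2} for \eqref{i1}--\eqref{i2}. The only cosmetic differences are that you phrase \eqref{i6} via the adjoint $(j_+KJ)^*$ where the paper performs the equivalent changes of variables directly, and you integrate the backward-ray parameter before the tube variables where the paper does the reverse.
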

The proof of Proposition \ref{ballistic_test} is given in Section \ref{sec_app}.
\begin{proposition}
\label{singlescat_test}
Let $n\ge 3$. Assume that \eqref{INV} holds and that $k\in L^\infty(X\times V^2)$.
Let $(z',v')\in \Gamma_-$ and let $\eta>0$, and let $\phi\in L^\infty(\Gamma_+)$ so that ${\rm supp}\phi\subset\{(z,v)\in \Gamma_+\ |\ d(z,v)\le \eta\}$ and $\|\phi\|_\infty\le 1$, where $d(z,v)$ denotes the distance between the lines $z'+\R v'$ and $z+\R v$. Then
there exist positive constants such that
\begin{equation}
{\big|\int_{\Gamma_+}\alpha_{n-1}(z,v,z'',v'')\phi(z,v)d\xi(z,v)\big|\over |\nu(z'')\cdot v''|}\le C_{n-1}e^{n\|\tau\sigma_-\|_\infty}\|k\|_{\infty}^{n-1} \eta^{n-2}(1+|\ln(\eta)|),\label{j1b}
\end{equation}
\begin{eqnarray}
{\big|\int_{\Gamma_+}\alpha_n(z,v,z'',v'')\phi(z,v)d\xi(z,v)\big|\over |\nu(z'')\cdot v''|}\le C_n e^{(n+1)\|\tau\sigma_-\|_\infty}\|k\|_{\infty}^n\eta^{n-2},&&\label{j2}\\
{\big|\int_{\Gamma_+}\alpha_{n+1}(z,v,z'',v'')\phi(z,v)d\xi(z,v)\big|\over |\nu(z'')\cdot v''|}\le C_{n+1}e^{(n+2)\|\tau\sigma_-\|_\infty} \|k\|_{\infty}^{n+1} \eta^{n-2},&&\label{j3}
\end{eqnarray}
\begin{equation}
\sup_{f\in L^1(\Gamma_-,d\xi)\atop\|f\|_{L^1(\Gamma_-,d\xi)}=1}\Big|\int_{\Gamma_+}\phi(x,v)j_+K^{n+2}(I-K)^{-1}Jf(x,v) d\xi(x,v)\Big|\le C_{n+2}e^{(n+2)\|\tau\sigma_-\|_\infty}\|k\|_{\infty}^{n+2}\eta^{n-2},\label{j4}
\end{equation}
for any $n\ge 3$ and for a.e. $(z'',v'')\in \Gamma_-$, and
\begin{equation}
{\big|\int_{\Gamma_+}\alpha_m(z,v,z'',v'')\phi(z,v)d\xi(z,v)\big|\over |\nu(z'')\cdot v''|}\le C_m e^{(m+1)\|\tau\sigma_-\|_\infty}\|k\|_{\infty}^m\eta^{m-1},\textrm{ for }2\le m\le n-2,\label{j1}
\end{equation}
when $n\ge 4$  and for a.e. $(z'',v'')\in \Gamma_-$. The constants $C_m$, $2\le m\le n+1$, depend on the diameter of $X$ while $C_{n+2}$ depends on  the diameter of $X$ and subcritical conditions.
\end{proposition}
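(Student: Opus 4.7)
The proof follows the same strategy as Proposition \ref{ballistic_test}, with the ballistic point-support condition $|z-z'-\tau_+(z',v')v'|\le\eta$, $|v-v'|\le\eta$ replaced by the tubular line-support condition $d(z,v)\le\eta$. The effective support in $\Gamma_+$ now has one extra free dimension (translation along the line $L':=z'+\R v'$), and this is what accounts for the loss of roughly one power of $\eta$ in each bound compared with \eqref{i2}, \eqref{i1}, \eqref{i4}, \eqref{i5}.

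First I would apply Lemma \ref{est2} to bound $\alpha_m(z,v,z'',v'')/|\nu(z'')\cdot v''|$ via \eqref{e4} for $2\le m\le n+1$, and handle the remainder term in \eqref{j4} through the Neumann series $K^{n+2}(I-K)^{-1}=\sum_{j\ge 0}K^{n+2+j}$ combined with the uniform bound \eqref{e1} on $\gamma_{n+2}/\tau_-$; the sub-critical hypothesis \eqref{INV} absorbs the resulting geometric series into a finite constant. Every bound to be proved then reduces to controlling, for the appropriate $m$,
$$
I_m:=\int_{\{(z,v)\in\Gamma_+\,:\,d(z,v)\le\eta\}}|\phi(z,v)|\int_0^{\tau_-(z,v)}\int_0^{\tau_+(z'',v'')}\frac{dt\,ds}{|z-tv-z''-sv''|^{n-m+1}}\,d\xi(z,v),
$$
plus a simpler analogue without the $s$-integration for the remainder term.

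Next I would apply the change of variables $y:=z-tv$, which bijects (up to a null set) $(z,v,t)\in\Gamma_+\times(0,\tau_-(z,v))$ with $(y,v)\in X\times V$, with Jacobian $d\xi(z,v)\,dt=dy\,dv$. Under this substitution the support condition on $\phi$ becomes: the line $y+\R v$ lies within distance $\eta$ of $L'$. A short geometric computation shows that for each fixed $v$ with $v\ne v'$ this set coincides with the $\eta$-neighborhood of the $2$-plane $\mathrm{span}(L',v)$, whose intersection with $X$ has Lebesgue measure $\lesssim\eta^{n-2}$; in the degenerate parallel case $v=v'$ it collapses to the $\eta$-tube around $L'$ in $X$, of measure $\lesssim\eta^{n-1}$. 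I would then perform the $y$-integration against the singular kernel $|y-z''-sv''|^{-(n-m+1)}$, followed by the $s$-integration along the observation line $z''+\R v''$ and finally the $v$-integration over $V$. Standard weakly singular integral estimates in the spirit of Lemma \ref{lem_int}, applied to the anisotropic slab geometry just described, then yield: a bound $\eta^{m-1}$ when $n-m+1\ge 3$ (i.e.\ $m\le n-2$), proving \eqref{j1}; an extra logarithmic factor at the critical exponent $n-m+1=2$ (i.e.\ $m=n-1$), proving \eqref{j1b}; and a pure volume bound $\eta^{n-2}$ in the super-critical cases $n-m+1\le 1$ where the kernel is locally integrable, proving \eqref{j2}--\eqref{j3}. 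The remainder estimate \eqref{j4} then follows directly from the same $\eta^{n-2}$ volume bound on the lifted support, since the Neumann-series analysis has already absorbed all $y$-dependence into a constant.

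The main difficulty will be the $y$-integration of the singular kernel over the anisotropic $\eta$-slab around $\mathrm{span}(L',v)$: the singularity of $|y-z''-sv''|^{-(n-m+1)}$ is concentrated on a \emph{second} line, namely the observation line $z''+\R v''$, and one must bound the overlap of the two structures uniformly in $(z'',v'')$. Two subcases are especially delicate: the transition $v\to v'$, where the slab geometry degenerates from codimension $n-2$ to codimension $n-1$ and the combinatorial dependence on $v$ changes; and the critical exponent $m=n-1$, where the dimensional balance produces the logarithmic factor in \eqref{j1b} and must be tracked carefully. These geometric multiple-scattering estimates are essentially those developed in \cite{BJ-IPI-08, BJ-HGS-09}, here adapted to the Wasserstein framework of the present paper.
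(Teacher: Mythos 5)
Your proposal is correct and follows essentially the same route as the paper: reduce each bound via Lemma \ref{est2} (and the boundedness of $(I-K)^{-1}J$ under \eqref{INV} for the remainder) to a weakly singular integral over the lifted support $\{(y,v): d(y,v)\le\eta\}$, parametrize that support for fixed $v\ne\pm v'$ by an $(n-2)$-ball of radius $\sim\eta$ times two bounded coordinates along $v'$ and the component of $v$ orthogonal to $v'$, and apply the uniform-in-shift estimates (the analogues of \eqref{P1}--\eqref{P2} in one fewer dimension) to get exactly the claimed case split at $m\le n-2$, $m=n-1$, and $m\ge n$. The only cosmetic difference is that for $m=n+1$ the paper invokes the $L^\infty$ kernel bound \eqref{e2} rather than \eqref{e4}, which is precisely your "locally integrable / pure volume bound" case.
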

The proof of Proposition \ref{singlescat_test} is also given in Section \ref{sec_app}.

\section{Proof of Theorems \ref{thm:exactsource}, \ref{thm:appsource} and \ref{thm:stabcoefs}}
\label{sec:inv}

Using the estimates presented in the preceding section, we are now ready to prove the main results of the paper. We first recall the following Lemma.

\begin{lemma}[see \cite{McDST-IP-10}]
\label{lem_unity}
There is a family of maps $\psi_{\rho,x_0',\theta_0'}\in L^1(\Gamma_-,d\xi)$, $(x_0',\theta_0')\in \Gamma_-$ and $\rho>0$, such that $\|\psi_{\rho,x_0',\theta_0'}\|_{L^1(\Gamma_-,\xi)}=1$ and for any $f\in L^\infty(\Gamma_-,d\xi)$ given,
\begin{equation}
\lim_{\rho\to 0^+}\int_{\Gamma_-}\psi_{\rho,x_0',\theta_0'}(x',\theta')f(x',\theta')d\xi(x',\theta')=f(x'_0,\theta_0'),\label{j20}
\end{equation}
whenever $(x_0',\theta_0')$ is in the Lebesgue set of $f$. In particular, \eqref{j20} holds for almost every $(x_0',\theta_0')\in \Gamma_-$.
\end{lemma}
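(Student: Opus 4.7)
The plan is to build $\psi_{\rho,x_0',\theta_0'}$ as a properly normalized approximation to the Dirac mass at $(x_0',\theta_0')$ on the measure space $(\Gamma_-,d\xi)$ and then invoke the Lebesgue differentiation theorem.

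First, I would fix a natural metric on $\Gamma_-$, e.g., the one induced from the ambient product $\partial X\times V\subset\mathbb{R}^n\times\mathbb{S}^{n-1}$, and for each $(x_0',\theta_0')\in\Gamma_-$ denote by $B_\rho(x_0',\theta_0')$ its $\rho$-neighborhood in $\Gamma_-$. Define
\begin{equation*}
\psi_{\rho,x_0',\theta_0'}(x',\theta'):=\frac{\mathbf 1_{B_\rho(x_0',\theta_0')}(x',\theta')}{\xi\bigl(B_\rho(x_0',\theta_0')\bigr)},
\end{equation*}
which is well defined whenever $\xi(B_\rho(x_0',\theta_0'))>0$. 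Since $\partial X$ is of class $C^1$ and the weight $|v\cdot\nu(x)|$ is continuous, $d\xi$ is a positive Radon measure on the $(2n-2)$-dimensional manifold $\Gamma_-$, and the set of $(x_0',\theta_0')$ for which $\xi(B_\rho)=0$ for some $\rho>0$ lies in the grazing set $\{v\cdot\nu(x)=0\}$, which is $\xi$-negligible. By construction, $\|\psi_{\rho,x_0',\theta_0'}\|_{L^1(\Gamma_-,d\xi)}=1$.

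Next, I would observe that
\begin{equation*}
\int_{\Gamma_-}\psi_{\rho,x_0',\theta_0'}(x',\theta')f(x',\theta')\,d\xi(x',\theta')=\frac{1}{\xi(B_\rho(x_0',\theta_0'))}\int_{B_\rho(x_0',\theta_0')}f\,d\xi,
\end{equation*}
which is exactly the $\xi$-average of $f$ over $B_\rho$. Applying the Lebesgue differentiation theorem on the Radon measure space $(\Gamma_-,d\xi)$ (equivalently: viewing it locally as a subset of $\mathbb{R}^{2n-2}$ with a continuous, strictly positive density away from the grazing set, where the classical Besicovitch--Vitali covering argument applies), these averages converge to $f(x_0',\theta_0')$ at every Lebesgue point of $f$. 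The ``in particular'' statement is then immediate from the Lebesgue differentiation theorem, which guarantees that $\xi$-a.e.\ point of $\Gamma_-$ is a Lebesgue point of any $f\in L^\infty(\Gamma_-,d\xi)\subset L^1_{\mathrm{loc}}(\Gamma_-,d\xi)$.

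The main technical care concerns the degeneracy of $d\xi$ on the grazing set $\{v\cdot\nu=0\}$, which makes the density of $d\xi$ with respect to the surface measure on $\Gamma_-$ vanish on a lower dimensional subset. I would handle this either by excluding a $\xi$-null neighborhood of that set (enough for the ``almost every'' conclusion), or, if one wants the full Lebesgue-point statement, by noting that the density $|v\cdot\nu(x)|$ is still continuous, so on small scales around any nongrazing point it behaves like the flat Lebesgue density up to multiplicative constants uniform in $\rho$, which is exactly the regularity needed for Besicovitch's differentiation theorem to apply. No other serious obstacle is anticipated; the result is essentially the standard Lebesgue differentiation theorem transported to the weighted boundary measure space.
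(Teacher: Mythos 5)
Your construction is correct and is essentially the standard argument behind the cited reference: a nonnegative kernel concentrating at $(x_0',\theta_0')$, normalized to unit mass in $L^1(\Gamma_-,d\xi)$, combined with the Lebesgue differentiation theorem for the Radon measure $d\xi$ on the $(2n-2)$-dimensional manifold $\Gamma_-$ (the paper itself offers no proof, deferring to \cite{McDST-IP-10}). Note only that your worry about the grazing set is moot for well-definedness, since $\Gamma_-$ is defined by the strict inequality $v\cdot\nu(x)<0$, so $|\theta_0'\cdot\nu(x_0')|>0$ and $\xi(B_\rho(x_0',\theta_0'))>0$ for every point of $\Gamma_-$; the continuity and positivity of the weight there then justify, exactly as you say, passing between averages with respect to $d\xi$ and with respect to the unweighted measure $d\mu(x)\,dv$.
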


\subsection{Proof of Theorem \ref{thm:exactsource}}
We assume that assumptions \eqref{INV} and \eqref{HL} are satisfied and that $X$ is of class $C^1$. First let $\phi\in  {\rm Lip}(\Gamma_+)$, ${\rm Lip}\phi\le \kappa$. From \eqref{eq:decalb} and \eqref{eq:depart} (with $g=g_j=\psi_{\rho,x'',v''}$, $j=1,2$)
it follows that
\begin{equation}
|\l(B_1-B_2)\psi_{\rho,x'',v''},\phi\r+\l(S_1-S_2)\psi_{\rho,x'',v''},\phi\r+\l(M_1-M_2)\psi_{\rho,x'',v''},\phi\r|\le\ep,\label{j21}
\end{equation}
for $(x'',v'')\in \Gamma_-$ and $\rho>0$.
Then by Lemma \ref{lem_unity}
\begin{eqnarray}
&&\l(B_1-B_2)\psi_{\rho,x'',v''},\phi\r\nonumber\\
&=&\int_{\Gamma_+}\phi(x,v)(E_1-E_2)(x-\tau_-(x,v)v,x)\psi_{\rho,x'',v''}(x-\tau_-(x,v)v,v)d\xi(x,v)\nonumber\\
&=&\int_{\Gamma_-}\phi(x+\tau_+(x,v)v,v)(E_1-E_2)(x,x+\tau_+(x,v)v)\psi_{\rho,x'',v''}(x,v)d\xi(x,v)\nonumber\\
&\to&\phi(x''+\tau_+(x'',v'')v'',v'')(E_1-E_2)(x'',x''+\tau_+(x'',v'')v'') \label{j22}
\end{eqnarray}
as $\rho\to 0^+$, for $(x'',v'')\in \Gamma_-$ (the function $\phi(x+\tau_+(x,v)v,v)(E_1-E_2)(x,x+\tau_+(x,v)v)$ is continuous in $(x,v)\in \Gamma_-$).
Similarly from \eqref{t7b} it follows that
\begin{eqnarray}
&&\l(S_1-S_2)\psi_{\rho,x'',v''},\phi\r
=\int_{\Gamma_-}\int_V\int_0^{\tau_+(x,v)}(\tilde E_1k_1-\tilde E_2k_2)(x+sv,v,v')\nonumber\\
&&\times\phi(x+sv+\tau_+(x+sv,v')v',v')ds dv'\psi_{\rho,x'',v''}(x,v) d\xi(x,v)\nonumber\\
&\to&\int_V\int_0^{\tau_+(x'',v'')}(\tilde E_1k_1-\tilde E_2k_2)(x''+sv'',v'',v')\nonumber\\
&&\times\phi(x''+sv''+\tau_+(x''+sv'',v')v',v')ds dv',\textrm{ as }\rho\to 0^+,\label{j23}
\end{eqnarray}
for $(x'',v'')\in \Gamma_-$  (the function $\int_V\int_0^{\tau_+(x,v)}(\tilde E_1k_1-\tilde E_2k_2)(x+sv,v,v')
\phi(x+sv+\tau_+(x+sv,v')v',v')ds dv'$ is continuous in $(x,v)\in \Gamma_-$).
Finally
\begin{eqnarray}
&&\l(M_1-M_2)\psi_{\rho,x'',v''},\phi\r-\int_{\Gamma_+}\phi(x,v)j_+K^{n+2}(I-K)^{-1}J\psi_{\rho,x'',v''}(x,v)d\xi(x,v)\nonumber\\
&=&\sum_{m=2}^{n+1}\int_{\Gamma_+\times\Gamma_-}(\alpha_{m,1}-\alpha_{m,2})(x,v,x',v')\phi(x,v)\psi_{\rho,x'',v''}(x',v')d\xi(x',v') d \xi(x,v)\nonumber\\
&\to&\sum_{m=2}^{n+1}\int_{\Gamma_+}(\alpha_{m,1}-\alpha_{m,2})(x,v,x'',v'')\phi(x,v)d \xi(x,v),\label{j24}
\end{eqnarray}
as $\rho\to 0^+$ when $(x'',v'')$ belongs to the intersection ${\mathcal L}_\phi$ of the Lebesgue sets ${\mathcal L}_{j,\phi}$ of the functions $\int_{\Gamma_+}(\alpha_{j,1}-\alpha_{j_2})(x,v,x'',v'')\phi(x,y)d \xi(x,y)$ that belong to $L^\infty(\Gamma_-)$ for $j=2\ldots n+1$.

We now chose our function $\phi$ to derive the first estimate of the Theorem. Let $(x_0,v_0)\in \Gamma_-$,  $\eta\in (0,\diam)$ and let $\phi\in {\rm Lip}(\Gamma_+)$, ${\rm Lip \phi}\le \kappa$ so that 
\begin{equation}
{\rm supp}\phi\subset\{(x,v)\in \Gamma_+\ |\ |x-x_0-\tau_+(x_0,v_0)v_0|\le \eta,\ |v-v_0|\le \eta\},\label{p1a}
\end{equation}
\begin{eqnarray}
&&\phi(x_0+\tau_+(x_0,v_0)v_0,v_0)=\|\phi\|_{L^\infty(\Gamma_+)}\le 1,\label{p1b}\\
&&C_X^{-1}\kappa \eta\le \|\phi\|_{\infty}\le C_X\kappa \eta,\label{p1}
\end{eqnarray}
for some constant $C_X$ that depends on $X$.

We set $y_0=x_0+\tau_+(x_0,v_0)v_0$.
By \eqref{p1a} and Proposition \ref{ballistic_test} we have 
\begin{eqnarray}
&&\big|\int_V\int_0^{\tau_+(x'',v'')}
\phi(x''+sv''+\tau_+(x''+sv'',v')v',v')\nonumber\\
&&\times (\tilde E_1k_1-\tilde E_2k_2)(x''+sv'',v'',v')ds dv'\big|\nonumber\\
&&\le C\max(\|k_1\|_\infty, \|k_2\|_\infty)\|\phi\|_\infty\eta^{n-1},\label{j41}
\end{eqnarray}
\begin{equation}
\sup_{\rho}\Big|\int_{\Gamma_+}\phi(x,v)j_+K^{n+2}(I-K)^{-1}J\psi_{\rho,x'',v''}(x,v)d\xi(x,v)\Big|
\le C\|\phi\|_\infty\eta^{2n-2},\label{j42}
\end{equation}
for $(x'',v'')\in \Gamma_-$, and
\begin{equation}
\Big|\sum_{m=2}^{n+1}\int_{\Gamma_+}(\alpha_{m,1}-\alpha_{m,2})(x,v,x'',v'')\phi(x,v)d \xi(x,v)\Big|
\le C\|\phi\|_\infty \eta^{n-1},\label{j43}
\end{equation}
for a.e. $(z'',v'')\in \Gamma_-$ (more precisely, we verify that \eqref{j41} follows from \eqref{i6} and \eqref{j23}). Combining \eqref{j21}--\eqref{j24} and \eqref{j41}--\eqref{j43}, we thus obtain
\begin{equation}
|\phi(x''+\tau_+(x'',v'')v'',v'')||E_1-E_2|(x'',x''+\tau_+(x'',v'')v'')\le \ep +C\|\phi\|_{L^\infty(\Gamma_+)}\eta^{n-1}, \label{j43a}
\end{equation}
for $(x'',v'')\in \Gamma_-$, where $C$ depends on $X$, the subcritical condition, and on $\|k\|_\infty$ (the left-hand side of \eqref{j43a} is continuous in $(x'',v'')$). 
Set $(x'',v'')=(x_0,v_0)$ in the latter estimate and use \eqref{p1} to obtain
\begin{eqnarray}
|(E_1-E_2)(x_0,y_0)|\le C({\ep\over \kappa \eta}+\eta^{n-1}).\label{j43b}
\end{eqnarray}

Note that from  \eqref{p1} and  from the estimate $\|\phi\|_{L^\infty}\le 1$ it follows that 
$\kappa\eta\le C_X$. In addition we impose $\eta\le C_X^{-1}\kappa^{-1}$ so that the constraint $\|\phi\|_{L^\infty}\le 1$ is a consequence of \eqref{p1}. Set $\tilde C_X=\min(C_X,C_X^{-1})$, $\kappa'={\kappa\diam\over \tilde C_X}$ and $\eta'={\eta\over \diam}$ and we obtain that 
\begin{equation}
|(E_1-E_2)(x_0,y_0)|\le C'({\ep\over \kappa' \eta'}+\eta'^{n-1}),\ \eta'\in (0,1),\ \kappa'\eta'\in (0,1),\label{j43d}
\end{equation}
for some constant $C'$ that  depends on $X$, the subcritical condition, and on $\|k\|_\infty$.
Then we derive \eqref{eq:stab1} from \eqref{j43d} and 
\begin{equation}
\min_{\eta'\in(0,1),\ \kappa'\eta'\le 1}({\ep\over \kappa' \eta'}+\eta'^{m-1})\le c_m\Big(\Big(\dfrac \eps {\kappa'} \Big)^{\frac{m-1}m}\vee \eps\Big),\label{j43e}
\end{equation}
for a constant $c_m$ that only depends on the integer $m\ge 2$.

We now prove \eqref{eq:stab2a} and \eqref{eq:stab2b}. We introduce the following piecewise affine functions $\chi_1\in C([0,+\infty),\R)$  and $\chi_2\in C(\R,\R)$
\begin{eqnarray*}
\chi_1(t)=
\left\lbrace
\begin{array}{l}
1\textrm{ when }t\le 1\\
(2-t)\textrm{ when }1\le t\le 2\\
0\textrm{ when }t\ge 2
\end{array}
\right.,\ 
\chi_2(t)=
\left\lbrace
\begin{array}{l}
1\textrm{ when }t\ge 1\\
t\textrm{ when }-1\le t\le 1\\
-1\textrm{ when }t\le -1
\end{array}
.\right.
\end{eqnarray*}

Let $(x_0,v_0)\in \Gamma_-$ and let $\eta\in (0,\min(\diam,\kappa^{-1}))$. 
We extend the functions $k_1$ and $k_2$ by  $0$ to $\R^n\times V\times V$ and extend the functions $\sigma_1$ and $\sigma_2$ by  $0$ to $\R^n\times V$. Under assumption \eqref{HL}, the function $G$, given by 
\begin{equation}
G(x,v):=
\left\lbrace
\begin{array}{l}
(1-(v_0\cdot v)^2)(\tilde E_1k_1-\tilde E_2 k_2)(x-{(x-x_0)\cdot (v-(v\cdot v_0) v_0)\over 1-(v\cdot v_0)^2} v,v_0,v)\textrm{ when }v\not=\pm v_0,\\
0 \textrm{ otherwise },
\end{array}
\right.\label{j50}
\end{equation}
for $(x,v)\in\Gamma_+$, defines a Lipschitz function on $\Gamma_+$ (for such a statement we actually use that ${(x-x_0)\cdot (v-(v\cdot v_0) v_0)\over 1-(v\cdot v_0)^2}$ is a bounded function of $(x,v)$ on the support of the function $G$). 

We introduce the function
$\tilde {\rm d}(x,v):=\min_{(t,s)\in (-\diam,\diam)^2}|x-x_0-tv+sv_0|$. The function $\tilde {\rm d}$ is $\diam$-Lipschitz function on $\Gamma_+$ and it satisfies
$$
{\rm d}(x,v):=\min_{(t,s)\in \R^2}|x-x_0-tv+sv_0|\le \tilde {\rm d}(x,v).
$$
We set 
\begin{equation}
\phi(x,v):=\kappa\eta\chi_2\Big({G(x,v)\over \eta({\rm Lip G}+1)}\Big)\chi_1\big({\tilde {\rm d}(x,v)\over \diam \eta}\big) (1-\chi_1)\big({|v-v_0|\over\eta}\big)\textrm{ for }(x,v)\in \Gamma_+.\label{j44a}
\end{equation}
The product in $\chi_2$ on the right hand side of \eqref{j44a} loosely speaking introduces an approximation of the sign of the function $\tilde E_1k_1-\tilde E_2 k_2$. 

Then $\phi\in{\rm Lip}(\Gamma_+)$ and 
\begin{eqnarray}
&&{\rm Lip}(\phi)\le\kappa, {\rm supp \phi}\subseteq\{(x,v)\in \Gamma_+\ |\ {\rm d}(x,v)\le 2\diam \eta \},\label{j44}\\
&& \|\phi\|_{L^\infty(\Gamma_+)}\le \kappa\eta\le 1,\ \phi(y_0,v_0)=0.\label{j44b}
\end{eqnarray}

By Proposition \ref{singlescat_test}, we have
\begin{equation}
\Big|\sum_{m=2}^{n+1}\int_{\Gamma_+}(\alpha_{m,1}-\alpha_{m,2})(x,v,x'',v'')\phi(x,v)d \xi(x,v)\Big|\le 
\left\lbrace
\begin{array}{l}
 C\|\phi\|_\infty \eta(1+|\ln(\eta)|),\textrm{ }n=3,\\
 C\|\phi\|_\infty \eta\textrm{ when }n\ge 4,
\end{array}
\right.\label{j45}
\end{equation}
\begin{equation}
\sup_{\rho}\Big|\int_{\Gamma_+}\phi(x,v)j_+K^{n+2}(I-K)^{-1}J\psi_{\rho,x'',v''}(x,v)d\xi(x,v)\Big|
\le C\|\phi\|_\infty\eta^{n-2},\label{j46}
\end{equation}
for a.e. $(x'',v'')\in \Gamma_-$.
Since $\phi(y_0,v_0)=0$, the contribution of the ballistic term vanishes and from \eqref{j21}--\eqref{j24} and \eqref{j45} and \eqref{j46}, we deduce that
\begin{eqnarray}
&&\int_V\int_0^{\tau_+(x_0,v_0)}\phi(x_0+sv_0+\tau_+(x_0+sv_0,v)v,v)(\tilde E_1k_1-\tilde E_2 k_2)(x_0+sv_0,v_0,v)ds dv\nonumber\\
&&\le\ep+\left\lbrace
\begin{array}{l}
 C\|\phi\|_\infty \eta(1+|\ln(\eta)|)\textrm{ when }n=3,\\
 C\|\phi\|_\infty \eta\textrm{ when }n\ge 4.
\end{array}
\right.\label{j47}
\end{eqnarray}
From \eqref{j44a} and the definition of the functions $\chi_1$ and $\chi_2$, we estimate the contribution of the single scattering for $(x'',v'')=(x_0,v_0)$ as
\begin{eqnarray}
&&\int_V\int_0^{\tau_+(x_0,v_0)}\phi(x_0+sv_0+\tau_+(x_0+sv_0,v)v,v)(\tilde E_1k_1-\tilde E_2 k_2)(x_0+sv_0,v_0,v)ds dv\nonumber\\
&=&\kappa\eta\int_V\int_0^{\tau_+(x_0,y_0)}\chi_2\Big({G(x_0+sv_0+\tau_+(x_0+sv_0,v)v,v)\over \eta({\rm Lip G}+1)}\Big)\nonumber\\
&&\times(1-\chi_1)\big({|v-v_0|\over\eta}\big)
(\tilde E_1k_1-\tilde E_2 k_2)(x_0+sv_0,v_0,v)ds dv\nonumber\\
&\ge&\kappa \eta\int_V\int_0^{\tau_+(x_0,y_0)}\big|\tilde E_1k_1-\tilde E_2 k_2\big|(x_0+sv_0,v_0,v)(1-\chi_1)\big({|v-v_0|\over\eta}\big)ds dv\nonumber\\
&&-\kappa \eta\int_W
|\tilde E_1k_1-\tilde E_2 k_2|(x_0+sv_0,v_0,v)(1-\chi_1)\big({|v-v_0|\over\eta}\big)ds dv\nonumber
\end{eqnarray}
\begin{eqnarray}
&\ge&\kappa \eta\int_V\int_0^{\tau_+(x_0,y_0)}\big|\tilde E_1k_1-\tilde E_2 k_2\big|(x_0+sv_0,v_0,v)ds dv\nonumber\\
&&-\kappa \eta\int_W
|\tilde E_1k_1-\tilde E_2 k_2|(x_0+sv_0,v_0,v)ds dv\nonumber\\
&&-2\diam\kappa \eta(\|k_1\|_\infty+\|k_2\|_\infty)\int_{v\in V,\ |v-v_0|\le 2\eta} dv,\label{j48}
\end{eqnarray}
where 
\begin{equation}
W:= \{(s,v)\in (0,\tau_+(x_0,y_0))\times V\ |\ |G(x_0+sv_0+\tau_+(x_0+sv_0,v)v,v)|\le ({\rm Lip} (G)+1)\eta\},
\end{equation}
(we used the definition of $\chi_1$ and $\chi_2$ in the above estimates).
Note that by definition of the function $G$ \eqref{j50} and the function $\phi$ \eqref{j44a}, the integrand on the left-hand side of \eqref{j48} is nonnegative. Considering now the following inclusion (see definition of the function $G$)
\begin{eqnarray*}
W&\subseteq& W_1\cup W_2,\\
W_1&:=&\{(s,v)\in (0,\tau_+(x_0,y_0))\times V\ |\ \min(|v-v_0|,|v+v_0|)\le \eta\}\\
W_2&:=&\{(s,v)\in (0,\tau_+(x_0,y_0))\times V\ |\ \min(|v-v_0|,|v+v_0|)\ge \eta,\\
&& |\tilde E_1k_1-\tilde E_2 k_2|(x_0+sv_0,v_0,v)|\le {({\rm Lip} (G)+1)\eta\over 1-(v\cdot v_0)^2}\},
\end{eqnarray*}
we estimate the second term of the right hand side of \eqref{j48} as
\begin{eqnarray}
&&\hskip -1cm\int_W|\tilde E_1k_1-\tilde E_2 k_2|(x_0+sv_0,v_0,v)ds dv
\le C(\|k_1\|_\infty+\|k_2\|_\infty)\eta^{n-1}+\int_{W_2} {{({\rm Lip} (G)+1)\eta\over 1-(v\cdot v_0)^2}}ds dv\nonumber\\
&\le & C(\|k_1\|_\infty+\|k_2\|_\infty)\eta^{n-1}+({\rm Lip} (G)+1)\diam \eta\int_{v\in V,\ \min(|v-v_0|,|v+v_0|) \ge \eta} \hskip -3cm(1-(v\cdot v_0)^2)^{-1}dv.\label{j49a}
\end{eqnarray}
We next deduce the basic estimate for $\eta'\in (0,\diam)$
\begin{equation}
\sup_{v''\in V}\int_{v\in V,\ \min(|v-v''|,|v+v''|) \ge \eta'} {dv \over 1-(v\cdot v'')^2}\le
\left\lbrace
\begin{array}{l}
C(1+|\ln(\eta')|)\textrm{ when }n=3,\\
C\textrm{ when }n\ge 4.
\end{array}
\right. \label{j49b}
\end{equation}
Collecting \eqref{j48}--\eqref{j49b}, we obtain that  for some constant $C$ that depends on $\diam$, $\|k_j\|_\infty$, ${\rm Lip}(\sigma_j)$, ${\rm Lip}(k_j)$, $j=1,2$, we have
\begin{eqnarray*}
&&\hskip -1cm\int_V\int_0^{\tau_+(x_0,v_0)}\phi(x_0+sv_0+\tau_+(x_0+sv_0,v)v,v)(\tilde E_1k_1-\tilde E_2 k_2)(x_0+sv_0,v_0,v)ds dv\nonumber\\
&\ge&\kappa \eta\int_V\int_0^{\tau_+(x_0,y_0)}\big|\tilde E_1k_1-\tilde E_2 k_2\big|(x_0+sv_0,v_0,v)ds dv
-\kappa \eta^2\left\lbrace
\begin{array}{l}
C(1+|\ln(\eta)|),\textrm{ }n=3,\\
C\textrm{ when }n\ge 4.
\end{array}
\right.
\end{eqnarray*}
Hence, combining the latter estimate with \eqref{j47},we obtain that there exists a constant $C$ that depends on $X$, $\|k_j\|_\infty$, ${\rm Lip}(\sigma_j)$, ${\rm Lip}(k_j)$, $j=1,2$,
 and on the subcritical conditions \eqref{INV} such that
\begin{equation*}
\int_V\int_0^{\tau_+(x_0,y_0)}\big|\tilde E_1k_1-\tilde E_2 k_2\big|(x_0+sv_0,v_0,v)ds dv
\le{\ep\over \kappa\eta}+
C\left\lbrace
\begin{array}{l}
 \eta(1+|\ln(\eta)|),\textrm{}n=3,\\
 \eta\textrm{ when }n\ge 4.
\end{array}
\right.
\end{equation*}
Then in dimension $n\ge 4$ we derive \eqref{eq:stab2b} in the same way we derive \eqref{eq:stab1} from \eqref{j43b} (see \eqref{j43e} for $m=2$).
In dimension $n=3$, we set $\eta'=\diam^{-1}\eta$ and $\kappa'=\kappa\diam$ and we obtain 
\begin{equation}
\int_V\int_0^{\tau_+(x_0,y_0)}\big|\tilde E_1k_1-\tilde E_2 k_2\big|(x_0+sv_0,v_0,v)ds dv
\le C'\big({\ep\over \kappa'\eta'}+
 \eta'(1+|\ln(\eta')|)\big),\label{j50a}
\end{equation}
for a constant $C'$ that depends on $X$, $\|k_j\|_\infty$, ${\rm Lip}(\sigma_j)$, ${\rm Lip}(k_j)$, $j=1,2$,
 and on the subcritical conditions \eqref{INV}. Then we derive \eqref{eq:stab2a} from \eqref{j50} and the estimate
$$
\min_{\eta'\in (0,\min(1,\kappa'^{-1}))}\big({\ep\over \kappa'\eta'}+
 \eta'(1+|\ln(\eta')|)\big)\le c\Big(\Big(\dfrac \eps {\kappa'} \Big)^{\frac12}(1+\sqrt{|\ln({\ep\over{\kappa'}})|})\vee \eps\Big)
$$
for a universal constant $c$.

\subsection{Proof of Theorem \ref{thm:appsource}}
We assume that assumptions \eqref{INV}, \eqref{HL} and \eqref{HL1} are satisfied and start with the identity
\begin{equation}
\l(A_1-A_2)g,\phi\r=\ep_2-\ep_1+\l A_2(g_2-g)-A_1(g_1-g),\phi\r.
\end{equation}
Hence
\begin{eqnarray*}
|\l(A_1-A_2)g,\phi\r|&\le&\ep+|\l(g_2-g),A_{2,{\rm back}}\phi\r|+|\l(g_1-g),A_{1,{\rm back}}\phi\r|\\
&\le&\ep+W_{C\kappa}(g_2-g)+W_{C\kappa}(g_1-g)\le \ep+2C\delta,
\end{eqnarray*}
where $C$ is the maximum between the norms of $A_{1,{\rm back}}$ and $A_{2,{\rm back}}$ as bounded operators from ${\rm Lip}(\Gamma_+)$ to ${\rm Lip}(\Gamma_-)$.
Then the proof follows the same lines as the proof of Theorem \ref{thm:exactsource}.

\subsection{Proof of Theorem \ref{thm:stabcoefs}}
Let us denote $M=\max(\|\tau \sigma_1\|_\infty,\|\tau\sigma_2\|_\infty,\|\tau\sigma_{1,p}\|_\infty)$. Our main task is to select the right representative of $(\sigma_1,k_1)$. Set
\begin{equation}
\phi(x,v):=e^{\int_0^{\tau_-(x,v)}(\sigma_1- \sigma_2)(x-sv,v)ds-{\tau_-(x,v)\over\tau(x,v)}\int^{\tau_+(x,v)}_{-\tau_-(x,v)}(\sigma_1-\sigma_2)(x+sv,v)ds},
\end{equation}
for $(x,v)\in X\times V$. Then 
\begin{equation}
\phi>0,\ \ln(\phi)\in W^\infty,
\end{equation}
\begin{equation}
v\cdot \nabla_x \ln(\phi)(x,v)=(\sigma_1-\sigma_2)(x,v)-{\int_{-\tau_-(x,v)}^{\tau_+(x,v)}(\sigma_1-\sigma_2)(x+sv,v)ds\over \tau(x,v)},\ (x,v)\in X\times V,
\label{G1}
\end{equation}
\begin{equation}
\phi_{|\pa X\times V}=1.
\end{equation}
Then let us choose the following representative of $(\sigma_1,k_1)$:
\begin{equation}
\tilde \sigma_1:=\sigma_1-v\cdot\nabla_x \ln(\phi),\ \tilde k_1(x,v',v)={\phi(x,v)\over \phi(x,v')}k_1(x,v',v),
\end{equation}
for $(x,v',v)\in X\times V^2$. From \eqref{G1} it follows that
\begin{equation}
\tau(\tilde \sigma_1-\sigma_2)(x,v)=\int_{-\tau_-(x,v)}^{\tau_+(x,v)}(\sigma_1-\sigma_2)(x+sv,v)ds,
\end{equation}
for $(x,v)\in X\times V$. Hence using Theorem \ref{thm:exactsource} or \ref{thm:appsource} we have
\begin{eqnarray*}
C {\mathfrak e}_b(\eps,\delta)
&\ge&e^{\int_{-\tau_-(x,v)}^{\tau_+(x,v)}\sigma_2(x+sv,v)ds}|e^{\int_{-\tau_-(x,v)}^{\tau_+(x,v)}(\sigma_1-\sigma_2)(x+sv,v)ds}-1|\\
&\ge& e^{-3M}\big|\int_{-\tau_-(x,v)}^{\tau_+(x,v)}(\sigma_1-\sigma_2)(x+sv,v)ds\big|=e^{-3M}|\tau(\tilde \sigma_1-\sigma_2)|(x,v),
\end{eqnarray*}
for $(x,v)\in X\times V$. Hence we obtain the first estimate of the Theorem.

After straightforward computations, we have
\begin{eqnarray*}
&&\tilde k_1(x,v',v)-k_2(x,v',v)\\
&=&e^{\int_0^{\tau_-(x,v')}\sigma_2(x-sv',v')ds+\int_0^{\tau_+(x,v)}\sigma_2(x+sv,v)ds}(\tilde E_1k_1-\tilde E_2k_2)(x,v',v)\\
&&+k_1(x,v',v)e^{-\int_0^{\tau_-(x,v')}(\sigma_1- \sigma_2)(x-sv',v')ds-\int_0^{\tau_+(x,v)}(\sigma_1-\sigma_2)(x+sv,v)ds}\\
&&\times\big(e^{{\tau_-(x,v')\over \tau(x,v')}\int_{-\tau_-(x,v')}^{\tau_+(x,v')}(\sigma_1-\sigma_2)(x+sv',v')ds+{\tau_+(x,v)\over \tau(x,v)}
\int_{-\tau_-(x,v)}^{\tau_+(x,v)}(\sigma_1-\sigma_2)(x+sv,v)ds}-1\big),
\end{eqnarray*}
for $(x,v',v)\in X\times V^2$. Hence 
\begin{eqnarray*}
&&|\tilde k_1(x,v',v)-k_2(x,v',v)|
\le e^{2\|\tau\sigma_2\|_\infty}|\tilde E_1k_1-\tilde E_2k_2|(x,v',v)\\
&&+|k_1(x,v',v)|e^{4\|\tau(\sigma_1-\sigma_2)\|_\infty}\big(|\int_{-\tau_-(x,v')}^{\tau_+(x,v')}(\sigma_1-\sigma_2)(x+sv',v')ds|\nonumber\\
&&+|\int_{-\tau_-(x,v)}^{\tau_+(x,v)}(\sigma_1-\sigma_2)(x+sv,v)ds|\big)\\
&\le &e^{2M}|\tilde E_1k_1-\tilde E_2k_2|(x,v',v)+2e^{8M}|k_1(x,v',v)|{\mathfrak e}_b(\eps,\delta),
\end{eqnarray*}
for $(x,v',v)\in X\times V^2$ (we used the basic estimate $|e^s-1|\le e^R|s|$ for $s\in [-R,R]$, $R>0$).
Finally from Theorems \ref{thm:appsource} or \ref{thm:exactsource} (and $\|\tau\sigma_{1,p}\|_\infty\le M$) it follows that
\begin{eqnarray*}
&&\int_V\int_0^{\tau_+(x,v')}|\tilde k_1(x+sv',v',v)-k_2(x+sv',v',v)|ds dv\nonumber\\
&\le& Ce^{2M}{\mathfrak e}_s(\eps,\delta)
+2Ce^{8M}M{\mathfrak e}_b(\eps,\delta),
\end{eqnarray*}
for $(x,v')\in X\times V$ and some constant $C$.
The second estimate is proved.

\section{Proof of Lemmas \ref{kernel} and \ref{est2} and Propositions \ref{ballistic_test} and \ref{singlescat_test}}
\label{sec_app}


\begin{proof}[Proof of Lemma \ref{kernel}]
For $m\ge 2$, let $\beta_m$ denote the distributional kernel of the
operator $K^m$ where $K$ is defined by \eqref{eq:K}.  We first give
the explicit expression of $\beta_2$ and $\beta_3$. We then obtain the explicit expression of the kernel $\beta_m$ by induction and derive the explicit expression for $\gamma_m$.  


Let $\psi\in L^1(X\times V,\tau^{-1}dx dv)$. From \eqref{eq:K} it follows that
\begin{eqnarray}
&&\hspace{-1cm}K^2\psi(x,v)=\int_0^{\tau_-(x,v)}\int_V k(x-tv,v_1,v)\int_0^{\tau_-(x-tv,v_1)}\!\!\!\!\!\!\!\!\!\!\!\!\!E(x,x-tv,x-tv-t_1v_1)\nonumber\\
&&\hspace{-1cm}\times\int_V k(x-tv-t_1v_1,v',v_1)\psi(x-tv-t_1v_1,v')dv'dt_1dv_1dt,
\label{t1}
\end{eqnarray}
for a.e. $(x,v)\in  X\times V$.
Performing the change of variables $x'=x-tv-t_1v_1$ ($dx'=t_1^{n-1}dt_1dv_1$) on the right hand side of \eqref{t1}, we obtain
\begin{eqnarray}
K^2\psi(x,v)&=&\int_0^{\tau_-(x,v)}\int_X k(x-tv,\widehat{x-tv-x'},v)E(x,x-tv,x')\nonumber\\
&&\times\int_V{k(x',v',\widehat{x-tv-x'})\over  |x-tv-x'|^{n-1}}\psi(x',v')dv'dx'dt,
\label{t2b}
\end{eqnarray}
for a.e. $(x,v)\in X\times V$. Hence
\begin{equation}
\beta_2(x,v,x',v')=\int_0^{\tau_-(x,v)}\!\!\!\!\!\!\! E(x,x-tv,x'){k(x-tv,v_1,v)k(x',v',v_1)_{|v_1= \widehat{x-tv-x'}}\over |x-tv-x'|^{n-1}}dt,\label{t2}
\end{equation}
for a.e. $(x,v,x',v')\in (X\times V)^2$. From
\eqref{t2} and \eqref{eq:K}, it follows that
$$
K^3\psi(x,v)=\int_{(0,\tau_-(x,v))\times V}\!\!\!\!\!\!\!\!\!\!\!\!\!\!\!\!\!\!\!\!\!\!\!\!E(x,x-tv)k(x-tv,v_1,v)\int_{X\times V}\!\!\!\!\!\!\!\beta_2(x-tv,v_1,x',v')\psi(x',v')dx'dv'dv_1dt
$$
\begin{eqnarray}
&&\hspace{-2cm}=\int_{X\times V}\hspace{-0.5cm}\psi(x',v')\int_0^{\tau_-(x,v)}\!\!\!\int_V k(x-tv,v_1,v)
\int_0^{\tau_-(x-tv,v_1)}{E(x,x-tv,x-tv-t_1v_1,x')\over |x-tv-t_1v_1-x'|^{n-1}}\nonumber\\
&&\hspace{-2cm}\times k(x-tv-t_1v_1,v_2,v_1)k(x',v',v_2)_{v_2=\widehat{x-tv-t_1v_1-x'}}dt_1dv_1dtdx'dv',\label{t3}
\end{eqnarray}
for a.e. $(x,v)\in X\times V$. 
Therefore performing the change of variables $z=x-tv-t_1v_1$ ($dz=t_1^{n-1}dt_1dv_1$) on the right hand side of \eqref{t3}, we obtain
\begin{eqnarray}
&&\beta_3(x,v,x',v')=\int_0^{\tau_-(x,v)}\int_X{E(x, x-tv,z,x')k(x',v',\widehat{z-x'})\over |x-tv-z|^{n-1}|z-x'|^{n-1}}\nonumber\\
&&\times k(x-tv,\widehat{x-tv-z},v)k(z,\widehat{z-x'},\widehat{x-tv-z})dtdz,\label{t5}
\end{eqnarray}
for a.e. $(x,v,x',v')\in (X\times V)^2$.
Then by induction we have
\begin{eqnarray}
\hspace{-2cm}
&&\beta_m(x,v,z_m,v_m)=\int_0^{\tau_-(x,v)}\int_{X^{m-2}}{E(x,x-tv,z_2,\ldots, z_m)\over |x-tv-z_2|^{n-1}\Pi_{i=2}^{m-1}|z_i-z_{i+1}|^{n-1}}\label{t6b}\\
\hspace{-2cm}
&&\times k(x-tv,v_1, v)\Pi_{i=2}^mk(z_i,v_i,v_{i-1})_{|v_1=\widehat{x-tv-z_2},\ v_i=\widehat{z_i-z_{i+1}},\ i=2\ldots m-1}\nonumber
  dtdz_2\ldots dz_{m-1},\nonumber
\end{eqnarray}
for a.e. $(x,v,x',v')\in  (X\times V)^2$. This is the right hand side of \eqref{t6}. And we obtain \eqref{t6} for $\gamma_m$, $m\ge 3$, the distributional kernel of $j_+K^m.$

Now let $\psi\in L^1(\Gamma_-,d\xi)$. Then for $m\ge 3$ we have
\begin{eqnarray}
&&K^mJ\psi(z_0,v_0)=\int_{X\times V}\beta_m(z_0,v_0,z_m,v_m)J\psi(z_m,v_m)dz_m dv_m\nonumber\\
&=&\int_{X\times V}\int_0^{\tau_-(z_0,v_0)}\int_{X^{m-2}}\left[{E(z_0,\ldots, z_m)\over \Pi_{i=1}^{m-1}|z_i-z_{i+1}|^{n-1}}\right.\nonumber\\
\hspace{-2cm}
&&\times \left. \Pi_{i=1}^mk(z_i,v_i,v_{i-1})\right]_{|z_1=z_0-tv_0\ v_i=\widehat{z_i-z_{i+1}},\ i=1\ldots m-1}
  dtdz_2\ldots dz_{m-1}\nonumber\\
&&\times E(z_m,z_m-\tau_-(z_m,v_m)v_m)\psi(z_m-\tau_-(z_m,v_m)v_m,v_m)dz_m dv_m\nonumber\\
&=&\int_{\Gamma_-}\int_0^{\tau_+(z_{m+1},v_m)}\int_0^{\tau_-(z_0,v_0)}\int_{X^{m-2}}\left[{E(z_0,\ldots, z_m)\over \Pi_{i=1}^{m-1}|z_i-z_{i+1}|^{n-1}}\right.\label{t7d}\\
\hspace{-2cm}
&&\times \Pi_{i=1}^mk(z_i,v_i,v_{i-1})E(z_m,z_{m+1})\nonumber\\
&&\left.\times \psi(z_{m+1},v_m)\right]_{|z_1=z_0-tv_0,\ z_m=z_{m+1}+sv_m,\atop v_i=\widehat{z_i-z_{i+1}},\ i=1\ldots m}\nonumber
  dsdz_2\ldots dz_{m-1}d\xi(z_{m+1},v_m), 
\end{eqnarray}
for a.e. $(z_0,v_0)\in X\times V$. (We used above the change of variables ``$(z_m,v_m)=(z_{m+1}+tv_m,v_m)$", $(z_{m+1},v_m)\in \Gamma_-$, $0< t<\tau_+(z_{m+1},v_m)$, $dtd\xi(z_{m+1},v_m)=dz_m dv_m$.) This yields \eqref{t7}. For $m=2$, we have
\begin{eqnarray}
&&K^2J\psi(z_0,v_0)=\int_{X\times V}\beta_2(z_0,v_0,z_2,v_2)J\psi(z_2,v_2)dz_2 dv_2\nonumber\\
&=&\int_{X\times V}\int_0^{\tau_-(z_0,v_0)}\!\!\!\!\!\!\!E(z_0,z_0-tv_0,z_2){k(z_0-tv_0,v_1,v_0)k(z_2,v_2,v_1)_{|v_1= \widehat{z_0-tv_0-z_2}}\over |z_0-tv_0-z_2|^{n-1}}dt\nonumber\\
&&\times E(z_2,z_2-\tau_-(z_2,v_2)v_2)\psi(z_2-\tau_-(z_2,v_2)v_2,v_2)dz_2 dv_2\nonumber\\
&=&\int_{\Gamma_-}\psi(z_3,v_2)\int_0^{\tau_+(z_3,v_2)}\int_0^{\tau_-(z_0,v_0)}\!\!\!\!\!\!\!E(z_0,z_0-tv_0,z_2,z_3)\nonumber\\
&&\times \Big({k(z_0-tv_0,v_1,v_0)k(z_2,v_2,v_1)\over |z_0-tv_0-z_2|^{n-1}}
\Big)_{\big|{z_2=z_3+sv_2\atop 
v_1= \widehat{z_0-tv_0-z_2}}}dt ds d\xi(z_3,v_2),
\end{eqnarray}
and we obtain \eqref{t7c}. 

Formula \eqref{t7b} is just a rewriting of the definition of the operator $KJ$ from \eqref{eq:Isteady} and \eqref{eq:K}.
\end{proof}

\begin{proof}[Proof of Lemma \ref{est2}]

We have (see \eqref{t6}) 
\begin{eqnarray}
&&|\gamma_{n+2}(z_0,v_0,z_{n+2},v_{n+2})|\le e^{(n+2)\|\tau\sigma_-\|_\infty}\|k\|_\infty^{n+2}\nonumber\\
&&\times
\int_0^{\tau_-(z_0,v_0)}\int_{X^n}\left[{1\over \Pi_{i=1}^{n+1}|z_i-z_{i+1}|^{n-1}}\right]_{|z_1=z_0-tv_0}
  dtdz_2\ldots dz_{n+1},\label{f7}
\end{eqnarray}
for a.e. $(z_0,v_0,z_{n+2},v_{n+2})\in \Gamma_+\times X\times V$.

First assume that $n=2$. Then from \eqref{f1}, it follows that there exist positive constants $C_1$ and $C_2$ such that
\begin{equation*}
\int_{X^2}{dz_2 dz_3\over \Pi_{i=1}^3|z_i-z_{i+1}|^{n-1}}
  \le C_1\int_X{(C_2-\ln|z_1-z_3|)dz_3\over |z_3-z_4|},
\end{equation*}
for any $(z_1,z_4)\in \bar X \times\bar X$, $z_1\not=z_4$. Then from \eqref{f3}  and \eqref{f7}, we obtain \eqref{e1} when $n=2$.

Now assume that $n\ge 3$.
Then from \eqref{f2}, it follows inductively that
\begin{equation}
\int_{X^n}{dz_2\ldots dz_{n+1}\over \Pi_{i=1}^{n+1}|z_i-z_{i+1}|^{n-1}}
  \le C\int_{X^{n-l+2}}{ dz_l\ldots dz_{n+1}\over |z_1-z_l|^{n-l+1}\Pi_{i=l}^{n+1}|z_i-z_{i+1}|^{n-1}}
\end{equation}
for $3\le l\le n$ and for  any $(z_1, z_{n+2})\in \bar X\times\bar X$, $z_1\not=z_{n+2}$ and for some constant $C$. With $l=n$ above and using \eqref{f1} and \eqref{f3}, we obtain
\begin{eqnarray}
\int_{X^n}{dz_2\ldots dz_{n+1}\over \Pi_{i=1}^{n+1}|z_i-z_{i+1}|^{n-1}}
  &\le &C\int_{X^2}{dz_n dz_{n+1}\over |z_1-z_n||z_n-z_{n+1}|^{n-1}|z_{n+1}-z_{n+2}|^{n-1}}\nonumber\\
&\le&  C\int_X{(C'-\ln|z_1-z_{n+1}|)dz_{n+1}\over |z_{n+1}-z_{n+2}|^{n-1}}\le C'',\label{f8}
\end{eqnarray}
for $(z_1,z_{n+2})\in \bar X^2$, $z_1\not=z_{n+2}$, for some positive constants $C'$ and $C''$. Combining \eqref{f7} and \eqref{f8} yields property \eqref{e1} when $n\ge 3$.

Estimate \eqref{e4} follows from \eqref{t7c} for $m=2$. Assume that $m \ge 3$.
From \eqref{t7} it follows that
\begin{eqnarray}
&&|\alpha_m(z_0,v_0,z_{m+1},v_m)|\le\|k\|_\infty^m e^{(m+1)\|\tau \sigma_-\|_\infty}|\nu(z_{m+1})\cdot v_m|\label{f9}\\
&&\times\int_0^{\tau_+(z_{m+1},v_m)}\int_0^{\tau_-(z_0,v_0)}\int_{X^{m-2}}\left[{1\over \Pi_{i=1}^{m-1}|z_i-z_{i+1}|^{n-1}}\right]_{|z_1=z_0-tv_0,\ z_m=z_{m+1}+sv_m}
 \hspace{-3cm} ds dt dz_2\ldots dz_{m-1},\nonumber
\end{eqnarray}
for a.e. $(z_0,v_0,z_{m+1},v_m)\in \Gamma_+\times \Gamma_-$.

First, we consider the case when $n\ge 3$. We proceed as before by induction with the help of  \eqref{f2} to obtain
\begin{eqnarray}
&&\int_{X^{m-2}}{dz_2\ldots dz_{m-1}\over \Pi_{i=1}^{m-1}|z_i-z_{i+1}|^{n-1}}
 \le C_l\int_{X^{m-l}}{dz_l\ldots dz_{m-1}\over|z_1-z_l|^{n-l+1}\Pi_{i=l}^{m-1}|z_i-z_{i+1}|^{n-1}}\nonumber\\
 &\le&C_{m-1}\int_{X}{dz_{m-1}\over |z_1-z_{m-1}|^{n-m+2}|z_{m-1}-z_m|^{n-1}}\label{f11}\\
 &\le&{C_m\over |z_1-z_m|^{n-m+1}}\label{f10}
\end{eqnarray}
for some positive constant $C_j$, $j=3\ldots m\le n$. Hence combining  \eqref{f9} and \eqref{f10}, we obtain \eqref{e4} when $n\ge 3$.

Assume again that $n\ge 3$ and set $m=n+1$ in \eqref{f11}. From \eqref{f1} and \eqref{f9} it follows that there exist positive constants $C$ and $C'$ so that
\begin{eqnarray}
&&|\alpha_{n+1}(z_0,v_0,z_{n+2},v_{n+1})|\le e^{(n+2)\|\tau\sigma_-\|_\infty}\|k\|_\infty^{n+1}|\nu(z_{n+2})\cdot v_{n+1}|\nonumber\\
&&\hskip-1cm\times\int_0^{\tau_+(z_{n+2},v_{n+1})}\!\!\!\!\!\int_0^{\tau_-(z_0,v_0)}\!\!\!\!\!\big(C-C'\ln(|z_0-tv_0-z_{n+2}-sv_{n+1}|)ds dt.\label{f12}
\end{eqnarray}
Estimate \eqref{f12} also holds when $n=2$ (see \eqref{f1} and \eqref{f9}).
Then there exists a positive constant $C_0$ that depends only on $\diam$ such that
\begin{equation}
\sup_{(w,\theta)\in \R^n\times V\atop |w|\le 2\diam}\int_0^{\diam}|\ln(|w-s\theta|)|ds\le C_0.
\end{equation}
We apply the above inequality for ``$(w,\theta)=(z_0-z_{n+2}-tv_0, v_{n+1})$" on \eqref{f12} and obtain 
\begin{eqnarray*}
|\alpha_{n+1}(z_0,v_0,z_{n+2},v_{n+1})|&\le& e^{(n+2)\|\tau\sigma_-\|_\infty}\|k\|_\infty^{n+1}|\nu(z_{n+2})\cdot v_{n+1}|\\
&&\times\tau_-(z_0,v_0)\big(C\diam+C_0),
\end{eqnarray*}
for a.e. $(z_0,v_0,z_{n+2},v_{n+1})\in \Gamma_+\times \Gamma_-$. Property \eqref{e2} follows.
\end{proof}

We will use the following Lemma, consisting of basic estimates, in the proof of Propositions \ref{ballistic_test} and \ref{singlescat_test}.
\begin{lemma}
The following estimates hold: Let $D\in (0,+\infty)$, then
\begin{equation}
\sup_{w''\in \R^{n-1}\atop |w''|\le D}\int_{w\in \R^{n-1}\atop |w|\le \tilde \eta} \int_0^{2D}{dtdw\over |w+w''|+|t|}\le C(D)\tilde \eta^{n-1}(1+|\ln(\tilde \eta)|),
\label{P1}
\end{equation}
for $\tilde \eta\in (0,D)$ and for some constant $C(D)$ that depends on $D$.
In addition
\begin{equation}
\sup_{w''\in \R^{n-1}}\int_{w\in \R^{n-1}\atop |w|\le \tilde \eta} \int_0^{+\infty}{dt dw\over (|w+w''|+|t|)^{n-m+1}}\le C_m\tilde \eta^{m-1},\label{P2}
\end{equation}
for $\tilde \eta\in (0,+\infty)$ and for $2\le m\le n-1$ and some constant $C_m$.
\end{lemma}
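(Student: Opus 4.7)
The plan is to treat the two inequalities separately but with the same strategy: integrate in $t$ explicitly to reduce each double integral to a single integral in $w$, then control the supremum over $w''$ by a symmetric rearrangement / monotonicity argument, and finally evaluate the reduced integral in polar coordinates on $\mathbb R^{n-1}$.

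For \eqref{P2}, I would first compute
\[
\int_0^{+\infty}\frac{dt}{(|w+w''|+t)^{n-m+1}}=\frac{1}{(n-m)\,|w+w''|^{n-m}},
\]
which is finite because the hypothesis $2\le m\le n-1$ ensures $n-m\ge 1$. Writing $u=w+w''$, the remaining integral is $\int_{B(w'',\tilde\eta)}|u|^{-(n-m)}\,du$ in $\mathbb R^{n-1}$. Since the integrand is radially symmetric and strictly decreasing in $|u|$, a standard Hardy--Littlewood rearrangement argument (or the elementary observation that translating the ball away from the origin only moves mass to regions where $|u|$ is larger) gives
\[
\sup_{w''\in\mathbb R^{n-1}}\int_{B(w'',\tilde\eta)}\frac{du}{|u|^{n-m}}=\int_{B(0,\tilde\eta)}\frac{du}{|u|^{n-m}}.
\]
Polar coordinates then yield $C\int_0^{\tilde\eta}r^{m-2}dr=C\tilde\eta^{m-1}/(m-1)$, where $m\ge 2$ guarantees integrability at $0$. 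This gives \eqref{P2}.

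For \eqref{P1} the strategy is the same but the logarithmic factor requires one extra case split. Integrating first in $t$ over $(0,2D)$,
\[
\int_0^{2D}\frac{dt}{|w+w''|+t}=\ln\!\Big(1+\frac{2D}{|w+w''|}\Big).
\]
Again setting $u=w+w''$, I would split $\{|u-w''|\le\tilde\eta\}$ according to whether $|u|\le\tilde\eta$ or $|u|>\tilde\eta$. On the first region (which occurs only when $|w''|\le 2\tilde\eta$), by the same radial-monotonicity argument as above the integral is bounded by $\int_{B(0,\tilde\eta)}\ln(1+2D/|u|)\,du$, and polar coordinates give
\[
C\int_0^{\tilde\eta}\!\ln\!\Big(1+\tfrac{2D}{r}\Big)r^{n-2}dr\le C(D)\,\tilde\eta^{n-1}\bigl(1+|\ln\tilde\eta|\bigr),
\]
using $\tilde\eta<D$. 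On the complementary region, $|u|\ge\tilde\eta$, so $\ln(1+2D/|u|)\le\ln(1+2D/\tilde\eta)\le C(D)(1+|\ln\tilde\eta|)$, and integrating this bound over a set of volume $O(\tilde\eta^{n-1})$ gives a contribution of the same order. Summing the two pieces yields \eqref{P1}.

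The only mildly delicate point is the rearrangement step, which replaces an arbitrary $w''$ by $w''=0$; this is where the uniform supremum is actually achieved up to constants, and it is easiest to justify by noting that $r\mapsto r^{-(n-m)}$ and $r\mapsto\ln(1+2D/r)$ are both non-negative and radially non-increasing, so that the layer-cake formula reduces the supremum to the centered case. Everything else is a direct polar-coordinate computation, so I do not anticipate any substantial obstacle beyond bookkeeping.
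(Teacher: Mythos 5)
Your proof is correct. The paper actually states this lemma without proof (it is introduced as ``consisting of basic estimates''), and your argument --- integrating explicitly in $t$, reducing the supremum over $w''$ to the centered ball via radial monotonicity of $r\mapsto r^{-(n-m)}$ and $r\mapsto\ln(1+2D/r)$, and finishing with polar coordinates in $\R^{n-1}$ (where $m\ge 2$ gives integrability at the origin and $m\le n-1$ gives convergence of the $t$-integral) --- is exactly the elementary computation the authors are implicitly relying on.
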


\begin{proof}[Proof of Proposition \ref{ballistic_test}]
We prove \eqref{i5}. From \eqref{e1}, it follows that
\begin{eqnarray}
&&\Big|\int_{\Gamma_+}\phi(x,v)j_+K^{n+2}(I-K)^{-1}Jf(x,v)d\xi(x,v)\Big|\nonumber\\
&&\le \int_{\Gamma_+}\tau_-(x,v)|\phi(x,v)|d\xi(x,v)\nonumber\\
&&\hskip -0.5cm\times\Big\|\int_{X\times V}\!\!\!\!\!{\gamma_{n+2}(z_0,v_0,z_{n+2},v_{n+2})\over\tau_-(z_0,v_0)}(I-K)^{-1}Jf(z_{n+2},v_{n+2})
dz_{n+2}dv_{n+2}\Big\|_{L^\infty({\Gamma_+}_{z_0,v_0})}\nonumber\\
&\le&  Ce^{(n+2)\|\tau\sigma_-\|_\infty}\|k\|_{\infty}^{n+2}\|(I-K)^{-1}J\|_{\mathcal{L}(L^1(\Gamma_-,d\xi),L^1(X\times V,\tau^{-1}dx dv))}\nonumber\\
&&\times\int_{\Gamma_+}\tau_-(x,v)|\phi(x,v)|d\xi(x,v)\label{f30}
\end{eqnarray}
for $f\in L^1(\Gamma_-,d\xi)$, $\|f\|_{L^1(\Gamma_-,d\xi)}=1$, and some constant $C$.
Then since ${\rm supp}\phi\subset\{(z,v)\in \Gamma_+\ |\ |z-z'-\tau_+(z',v')v'|\le \eta,\ |v-v'|\le \eta\}$ and $\|\phi\|_\infty\le 1$ and ${\rm dim}\Gamma_+=2n-2$, we have
\begin{equation}
\int_{\Gamma_+}\tau_-(z,v)|\phi(z,v)|d\xi(z,v)\le \diam\int_{{\rm supp}\phi}d\xi(z_{n+2},v_{n+2})\le C\eta^{2n-2},\label{f31}
\end{equation}
for some constant $C$. Combining \eqref{f30} and \eqref{f31}, we obtain \eqref{i5}.

We now prove \eqref{i4}. From \eqref{e2}, it follows that 
\begin{eqnarray}
&&{1\over |\nu(z'')\cdot v''|}\big|\int_{\Gamma_+}\alpha_{n+1}(z,v,z'',v'')\phi(z,v)d\xi(z,v)\big|\nonumber\\
&\le& Ce^{(n+2)\|\tau\sigma_-\|_\infty}\|k\|_{\infty}^{n+1}\int_{\Gamma_+}\tau_-(z,v)|\phi(z,v)|d\xi(z,v),\label{f32}
\end{eqnarray}
for a.e. $(z'',v'')\in \Gamma_-$ and for some constant $C$ which depends on the diameter of $X$. Then using again \eqref{f31}, we obtain \eqref{i4}.

We next prove \eqref{i6}. Let $f\in L^1(\Gamma_-,d\xi)$, $\|f\|_{L^1(\Gamma_-,d\xi)}=1$. We start from \eqref{t7b} and obtain
\begin{eqnarray*}
&&\Big|\int_{\Gamma_+}\phi(z,v)j_+KJf(z,v)d\xi(z,v)\Big|\le e^{2\|\tau\sigma_-\|_\infty}\|k\|_\infty \\
&&\times \int_{\Gamma_+\times V_{v_1}}|\phi(z,v)|\int_0^{\tau_-(z,v)}|f|(z-tv-\tau_-(z-tv,v_1)v_1,v_1)dt dv_1d\xi(z,v).
\end{eqnarray*}
Then, performing the change of variables $``(x,v)=(z-tv,v)"$ and again the change of variables $``(z_1+sv_1,v_1)=(x,v_1)"$, we have:
\begin{eqnarray*}
&&\Big|\int_{\Gamma_+}\phi(z,v)j_+KJf(z,v)d\xi(z,v)\Big|\\
&\le &e^{2\|\tau\sigma_-\|_\infty}\|k\|_\infty \int_{X\times V_v\times V_{v_1}}|\phi(x+\tau_+(x,v)v,v)||f|(x-\tau_-(x,v_1)v_1,v_1) dv_1 dx dv\\
&\le &e^{2\|\tau\sigma_-\|_\infty}\|k\|_\infty \int_{\Gamma_-\times V_v}\int_0^{\tau_+(z_1,v_1)}|\phi(z_1+sv_1+\tau_+(z_1+sv_1,v)v,v)|ds\\
&&\times|f|(z_1,v_1) d\xi(z_1,v_1)dv.
\end{eqnarray*}
Since ${\rm supp}\phi\subset\{(z,v)\in \Gamma_+\ |\  |v-v'|\le \eta\}$, we obtain
\begin{eqnarray*}
&&\Big|\int_{\Gamma_+}\phi(z,v)j_+KJf(z,v)d\xi(z,v)\Big|\\
&\le &e^{2\|\tau\sigma_-\|_\infty}\|k\|_\infty \|f\|_{L^1(\Gamma_-,d\xi)}\diam \int_{v\in V\atop |v'-v|\le \eta}dv\le Ce^{2\|\tau\sigma_-\|_\infty}\|k\|_\infty\eta^{n-1},
\end{eqnarray*}
for some positive contant $C$, which proves \eqref{i6}.

We now prove \eqref{i1} and \eqref{i2}.  Let $2\le m\le n$. From \eqref{e4}, it follows that for a.e. $(z'',v'')\in \Gamma_-$
\begin{eqnarray}
&&{\Big|\int_{\Gamma_+}\alpha_m(z,v,z'',v'')\phi(z,v)d\xi(z,v)\Big|\over |\nu(z'')\cdot v''|}\le Ce^{(m+1)\|\tau\sigma_-\|_\infty}\|k\|_{\infty}^m\nonumber\\
&& \hskip-1cm\times\int_{\Gamma_+}|\phi|(z,v)\int_0^{\tau_+(z'',v'')}\int_0^{\tau_-(z,v)}{dt ds\over |z''+tv''-z+sv|^{n-m+1}}d\xi(z,v).\label{L1}
\end{eqnarray}
By the change of variable $``(x,v)=(z-sv,v)"$, we have
\begin{eqnarray}
&&\int_{\Gamma_+}|\phi|(z,v)\int_0^{\tau_+(z'',v'')}\int_0^{\tau_-(z,v)}{dt ds\over |z''+tv''-z+sv|^{n-m+1}}d\xi(z,v)\nonumber\\
&=&\int_{X\times V}|\phi|(x+\tau_+(x,v)v,v)\int_0^{\tau_+(z'',v'')}{dt \over |z''+tv''-x|^{n-m+1}}dx dv\nonumber\\
&\le &\int_0^{\tau_+(z'',v'')}\Big(\int_{U_\eta}{dx dv \over |x''-x|^{n-m+1}}\Big)_{|x''=z''+tv''}dt.\label{L2}
\end{eqnarray}
for  $(z'',v'')\in \Gamma_-$, where 
$$
U_\eta:=\{(x,v)\in X\times V\ |\  \max(|v-v'|,\ |x+\tau_+(x,v)v-z'-\tau_+(z',v')v'|)\le \eta\}.
$$
For $(x,v)\in U_\eta$ then 
\begin{eqnarray}
&&|x-z'+(\tau_+(x,v)-\tau_+(z',v'))v'|\le |x+\tau_+(x,v)v-z'-\tau_+(z',v')v'|\nonumber\\
&&\hskip 4cm+|v-v'|\tau_+(x,v)
\le \eta(1+\diam).\label{L3b}
\end{eqnarray}
Therefore, 
\begin{equation}
U_\eta\subseteq\{(z'+w+\lambda v',v)\ | \ |v-v'|\le \eta,\ |w|\le\tilde \eta,\ w\cdot v'=0,\ \lambda\in (-\diam,\diam)\},\label{L3}
\end{equation}
where 
\begin{equation}
\tilde \eta=\min(\eta(1+\diam),\diam).\label{L5}
\end{equation}
We can also write any $x''\in X$ as a sum $z'+w''+\lambda'' v'$ where $|w''|\le\diam$, $w''\cdot v'=0$, $\lambda\in (-\diam,\diam)$. Therefore from  
\eqref{L3} and \eqref{P1} (with ``$D=\diam$") and \eqref{P2} it follows that 
\begin{eqnarray}
&&\sup_{x''\in X}\int_{U_\eta}{dx dv \over |x''-x|^{n-m+1}}\nonumber\\
&\le&2^{n-m+3\over 2}
\sup_{w''\in \R^{n-1}\atop |w''|\le \diam}\int_{v\in V\atop|v-v'|\le \eta}\int_{w\in \R^{n-1}\atop |w|\le \tilde \eta} \int_0^{2\diam}{ds dw\over (|w+w''|+|s|)^{n-m+1}}\nonumber\\
&\le& C\left\lbrace 
\begin{array}{l}
\tilde \eta^{2n-2}(1+|\ln(\tilde \eta)|),\textrm{ when }m=n,\\
\tilde \eta^{n+m-2},\textrm{ when }m<n,
\end{array}
\right.
\label{L5b}
\end{eqnarray}
for some constant $C$.
Combining \eqref{L2}, \eqref{L5b} and \eqref{L5}, we obtain that
\begin{eqnarray}
&&\int_{\Gamma_+}|\phi|(z,v)\int_0^{\tau_+(z'',v'')}\int_0^{\tau_-(z,v)}{dt ds\over |z''+tv''-z+sv|^{n-m+1}}d\xi(z,v)\nonumber\\
&\le &C'
 \left\lbrace
 \begin{array}{l}
\eta^{2n-2}(1+|\ln( \eta)|),\textrm{ when }m=n,\\
\eta^{n+m-2},\textrm{ when }m<n,
\end{array}
\right.\label{L7}
\end{eqnarray} 
for $(z'',v'')\in \Gamma_-$ and some positive constant $C'$. Combining \eqref{L1} and \eqref{L7} we obtain \eqref{i1} and \eqref{i2}.
\end{proof}

\begin{proof}[Proof of Proposition \ref{singlescat_test}]
For $(x,v)\in X\times V$ we still denote by $d(x,v)$ the distance between the lines $x+\R v$ and $z'+\R v'$, $d(x,v):=\inf_{s,t\in \R} |x+sv-z'-tv'|$.

We first prove \eqref{j4} and \eqref{j3}.
Since ${\rm supp}\phi\subset\{(z,v)\in \Gamma_+\ |\ d(z,v)\le \eta\}$ and $\|\phi\|_\infty\le 1$, we have
\begin{eqnarray}
\int_{\Gamma_+}\tau_-(z,v)|\phi(z,v)| d \xi(z,v)
&=&\int_{X\times V}|\phi(x+\tau_+(x,v)v,v)|dx dv\nonumber\\
&\le& \int_{(x,v)\in X\times V,\ d(x,v)\le \eta }\hskip-0.5cm dx dv\le C\eta^{n-2},\label{j10}
\end{eqnarray}
for some constant $C$ that depends on $X$, and from \eqref{f30} it follows that
\begin{equation*}
\Big|\int_{\Gamma_+}\phi(x,v)j_+K^{n+2}(I-K)^{-1}Jf(x,v) d\xi(x,v)\Big|
\le C'e^{(n+2)\|\tau\sigma_-\|_\infty}\|k\|_{\infty}^{n+2}\eta^{n-2},
\end{equation*}
for $f\in L^1(\Gamma_-,d\xi)$, $\|f\|_{L^1(\Gamma_-,d\xi)}=1$ and some constant $C'$, which proves \eqref{j4}.  Similarly  \eqref{j3} follows from  \eqref{f32} and \eqref{j10}.

We make the following remark: For $(x,v)\in X\times V$ so that $d(x,v):=\inf_{s,t\in \R} |x+sv-z'-tv'|\le \eta$ and $v\not=\pm v'$, we have 
\begin{eqnarray}
&&x-z'=w+\lambda_1 v'+\lambda_2{v-(v\cdot v') v'\over |v-(v\cdot v') v'|},\label{j5}\\
&&w\cdot v'=w\cdot v=0,\ |w|\le\tilde \eta,\ \max(|\lambda_1|,|\lambda_2|)\le \diam,\label{j6}
\end{eqnarray}
where 
\begin{equation}
\tilde \eta=\min(\eta, \diam) \label{j6b}
\end{equation} 
(the vector $w$ can be identified with a vector of the $n-2$ dimensional ball  of center 0 and radius $\tilde \eta$). 

We now prove \eqref{j1} and \eqref{j1b}. Let $n\ge 3$ and $2\le m\le n-1$. From \eqref{e4}, it follows that
\begin{eqnarray}
&&{\Big|\int_{\Gamma_+}\alpha_m(z,v,z'',v'')\phi(z,v)d\xi(z,v)\Big|\over|\nu(z'')\cdot v''|}
\le Ce^{(m+1)\|\tau\sigma_-\|_\infty}\|k\|_{\infty}^m\nonumber\\
&&\hskip -1cm\times\int_{X\times  V}|\phi|(x+\tau_+(x,v)v,v)\int_0^{\tau_+(z'',v'')}{dt \over |z''+tv''-x|^{n-m+1}}dx dv\label{j7}
\end{eqnarray}
for a.e. $(z'',v'')\in \Gamma_-$ (we proceed as we did for \eqref{L1} and \eqref{L2}). By \eqref{j5} and \eqref{j6} (where we also write ``$z''-z'=w''+\lambda_1''  v'+\lambda_2''{v-(v\cdot v') v'\over |v-(v\cdot v') v'|}$", $|w''|\le \diam$") we have
\begin{eqnarray}
&&\int_{\Gamma_+}|\phi|(z,v)\int_0^{\tau_+(z'',v'')}\int_0^{\tau_-(z,v)}{dt ds\over |z''+tv''-z+sv|^{n-m+1}}d\xi(z,v)\nonumber\\
&\le&C\sup_{w''\in \R^{n-2}\atop |w''|\le\diam}\int_{w\in \R^{n-2}\atop |w|\le \tilde \eta}\int_0^{2\diam}\int_0^{2\diam} 
{dw d\lambda_1 d\lambda_2\over (|w+w''|+|\lambda_1|+|\lambda_2|)^{n-m+1}}\nonumber\\
&\le& {C\over n-m}\sup_{w''\in \R^{n-2}\atop |w''|\le\diam}\int_{w\in \R^{n-2}\atop |w|\le \tilde \eta}\int_0^{2\diam} 
\hskip -8mm{dw d\lambda_1 \over (|w+w''|+|\lambda_1|)^{n-m}},\label{j8}
\end{eqnarray}
for some constant $C$.
Then we use \eqref{P1} or \eqref{P2} (in dimension $n-1$ instead of dimension $n$) and obtain
\begin{eqnarray}
&&\int_{\Gamma_+}|\phi|(z,v)\int_0^{\tau_+(z',v')}\int_0^{\tau_-(z,v)}{dt ds\over |z''+tv''-z+sv|^{n-m+1}}d\xi(z,v)\nonumber\\
&\le&\left\lbrace
\begin{array}{l}
C\eta^{m-1},\textrm{ when }m\le n-2,\\
C\eta^{n-2}(1+|\ln(\eta)|)\textrm{ when }m= n-1,
\end{array}
\right.\label{j9}
\end{eqnarray}
for some constant $C$. Combining \eqref{j7}--\eqref{j9}, we obtain \eqref{j1} and \eqref{j1b}.

We finally prove \eqref{j2}. Note \eqref{j7} also holds for $m=n$. Then
as above, we obtain that there exists a positive constant $C$ so that
\begin{eqnarray}
&&\int_{\Gamma_+}|\phi|(z,v)\int_0^{\tau_+(z'',v'')}\int_0^{\tau_-(z,v)}{dt ds\over |z''+tv''-z+sv|}d\xi(z,v)\nonumber\\
&\le &C\sup_{w''\in \R^{n-2}\atop |w''|\le\diam}\int_{w\in \R^{n-2}\atop |w|\le \tilde \eta}\int_0^{2\diam}\int_0^{2\diam} 
{dw d\lambda_1 d\lambda_2\over (|w+w''|+|\lambda_1|+|\lambda_2|)}\nonumber\\
&\le& C'\sup_{w''\in \R^{n-2}\atop |w''|\le\diam}\int_{w\in \R^{n-2}\atop |w|\le \tilde \eta}\int_0^{2\diam}
(1+|\ln(|w+w''|+|\lambda_1|)|)d\lambda_1 dw\nonumber\\
&\le&C''\sup_{w''\in \R^{n-2}\atop |w''|\le\diam}\int_{w\in \R^{n-2}\atop |w|\le \tilde \eta}dw
\le C'''\eta^{n-2},
\end{eqnarray}
for a.e. $(z'',v'')\in \Gamma_-$ and for some positive constant $C$, $C'$, $C''$ and $C'''$ that depend on the diameter of $X$.  Combining \eqref{j7} for $m=n$ and this latter estimate we obtain \eqref{j2}, which concludes the proof of the proposition.
\end{proof}

\section*{Acknowledgment} AJ is partially supported by French grant ANR-13-JS01-0006. GB would like to thank the Universit\'e de Lille 1, where part of this work was completed, for their hospitality. GB also acknowledges partial support from the National Science Foundation and the Office of Naval Research.


\end{document}